\documentclass{article}
\usepackage[margin=1.35in]{geometry} 
\usepackage{graphicx} 

\pdfoutput = 1

\usepackage{amsmath,amsthm,amssymb,amsfonts, fancyhdr, comment, parskip, tikz, caption, subcaption, graphicx, xcolor}
\usepackage[colorlinks,hypertexnames=false]{hyperref}
\usetikzlibrary{graphs}
\usetikzlibrary{shadings}
\usetikzlibrary{shapes.geometric}
\usetikzlibrary{arrows}
\usepackage{mathtools}
\usepackage{standalone}
\usepackage{upgreek}
\usepackage{diagbox}
\usepackage{booktabs} 
\usepackage{bm}
\usepackage{tikz-cd}
\usepackage{enumerate}
\usepackage{enumitem}
\usepackage{soul}
\usepackage{appendix}

\usepackage{xcolor}

\usepackage{listings}

\definecolor{codegreen}{rgb}{0,0.45,0}
\definecolor{codegray}{rgb}{0.45,0.45,0.45}
\definecolor{codepurple}{rgb}{0.55,0,0.55}
\definecolor{backcolour}{rgb}{0.97,0.97,0.97}

\usetikzlibrary{arrows,decorations.pathmorphing,backgrounds,positioning,fit}
\tikzstyle{dot} = [fill=black!25,inner sep=0.5mm,circle,draw,minimum size=1mm]

\usepackage{thm-restate}
\usepackage{thmtools}
\newtheorem{theorem}{Theorem}[section]
\newtheorem{proposition}[theorem]{Proposition}
\newtheorem{lemma}[theorem]{Lemma}

\newtheorem{corollary}[theorem]{Corollary}
\newtheorem{conjecture}[theorem]{Conjecture}

\newcommand{\R}{\mathbb{R}}

\newcommand{\cF}{\mathcal{F}}

\newcommand{\cC}{\mathcal{C}}

\newcommand{\cH}{\mathcal{H}}

\DeclareMathOperator{\KG}{KG}
\newcommand{\bstable}[3]{\binom{[#1]}{#2}_{#3\text{-stable}}}
\newcommand{\kgstable}[3]{\KG(#1, #2)_{#3\text{-stable}}}

\newcommand{\KGk}{\KG(n,k)_{3\text{-stable}}}

\newcommand{\sfK}{\textsf{K}}
\newcommand{\bS}{\mathbb{S}}

\DeclareMathOperator{\conv}{conv}

\title{The chromatic number of 3-stable Kneser graphs}
\date{}
\author{Wei-Chia Chen \thanks{Dep. of Mathematics, Iowa State University, Ames IA, USA. \texttt{\{wcchen, abparker, zerbib\}@iastate.edu}} \and
Alex Parker \footnotemark[1] \and
Shira Zerbib \footnotemark[1] \thanks{The research of S. Zerbib is supported by NSF CAREER award no. 2336239  and Simons Foundation award no. MP-TSM-00002629.}
}

\begin{document}
\maketitle
\begin{abstract}
 For an integer $s \ge 2$, a subset $S \subseteq [n]$ is {\em $s$-stable} if $\min \{j - i, n + i - j\}\ge s$ for every $i,j \in S$ with $i<j$. Denote the set of all $s$-stable subsets of size $k$ of $[n]$ by $\binom{[n]}{k}_{s\text{-stable}}$. Schrijver proved in 1978 that whenever $n\ge 2k$, the chromatic number of the Kneser graph $\mathrm{KG}\big( \binom{[n]}{k}_{2\text{-stable}}\big)$ is $n - 2k +2$. Generalizing this result, Meunier conjectured in 2011 that $\chi\left( \mathrm{KG}\big( \binom{[n]}{k}_{s\text{-stable}} \big) \right)= n - sk +s$ for all $n\ge sk$. This conjecture was previously proven for all even $s$ \cite{Chen-15}, for $s \ge 4$ and large enough $n$ \cite{Jonsson-12}, and  for $k=2$ \cite{DaneshMeun-21}. We prove the conjecture in the cases $s=3$ and $n$ large enough, or $k=s=3$. To this end, we prove versions of the Hilton-Milner theorem for $s$-stable sets. We also present a topological approach towards Meunier's conjecture.
\end{abstract}

\section{Introduction}

\subsection{Meunier's conjecture and main results}

Throughout the paper,  unless stated otherwise, $n,k,s$ are integers with $n \ge k \ge 2$ and $s\ge 2$. The set $\{1,2,\dots, n\}$ is denoted by $[n]$.  The {\em Kneser graph} $\KG(\mathcal{F})$ of a family of sets $\mathcal{F}$  is the graph whose vertex set is $\mathcal{F}$, and for $A, B \in \mathcal{F}$, $\{A,B\}$ forms an edge if $A \cap B = \emptyset$. When $\mathcal{F}$ is the collection $\binom{[n]}{k}$ of all $k$-subsets of $[n]$, the graph  $\KG(\mathcal{F})$ is denoted by $\KG(n,k)$. Let $\chi(G)$ be the chromatic number of the graph $G$. 

Proving a conjecture of Kneser~\cite{Kneser-55}, Lov\'asz~\cite{Lovasz-78} showed in 1978 that for $n\ge 2k$,  $\chi(\KG(n,k))=  n - 2k +2$. The upper bound  follows from the coloring  $c(A)= \min\{\min(A), n-2k +2\}$ for every set $A \in \binom{[n]}{k}$, while the lower bound was proven using the Borsuk-Ulam theorem.
 
For $i, j \in [n]$, with $i \leq j$,  let $d(i,j)= \min \{j - i, n + i - j\}$. That is, $d(i,j)$ is the distance between $i,j$ on the graph $C_n$ on vertex sets $[n]$, where the vertices on the cycle are ordered in the natural order. A set $A \in \binom{[n]}{k}$ is said to be  \emph{$s$-stable} if $d(i,j) \geq s$ for every $i,j \in A$. We  denote the collection of all $s$-stable $k$-subsets of $[n]$ by $\bstable{n}{k}{s}$, and the corresponding Kneser graph is denoted by $\kgstable{n}{k}{s}$. Note that $\kgstable{n}{k}{s}$ is an induced subgraph of $\KG(n,k)$.

Shortly after Lov\'asz' proof was published, Schrijver~\cite{Schrijver-78} showed that  $\chi(\kgstable{n}{k}{2})=\chi(\KG(n,k))$ for all  $n\ge 2k$. Furthermore, he showed that 
the graph $\kgstable{n}{k}{2}$ (now known as the {\em Schrijver graph}) is a vertex critical subgraph of  $\KG(n,k)$. 

 Observe that for all $s\ge 2$ and $n\ge sk$ one has  $\chi(\kgstable{n}{k}{s}) \leq n-sk +s$, as exhibited  by the coloring  $c(A) = \min\{ \min(A), n-sk +s\}$, for $A \in \bstable{n}{k}{s}$. Generalizing Schrijver's theorem, Meunier~\cite{Meunier-11} conjectured in 2011 that this upper bound is tight. 
\begin{conjecture}[Meunier \cite{Meunier-11}]
\label{conj:main}
    For all $s\ge 2$ and $n \geq sk$,
    $$\chi(\kgstable{n}{k}{s}) = n-sk +s.$$
\end{conjecture}

 The conjecture is trivial when $n=sk$. 
  In \cite{Meunier-11} Meunier proved the conjecture when $n = sk + 1$. 
Jonsson \cite{Jonsson-12} showed that Meunier's conjecture holds for all $s \geq 4$ when $n$ large enough (this manuscript is no longer available online). 

\begin{theorem}[Jonsson \cite{Jonsson-12}]
    \label{thm:jonsson-n-large}
    For all $k \geq 2$, $s \geq 4$, and $n \geq 2s(k-1)+2$,
    $$\chi(\kgstable{n}{k}{s}) = n - sk +s.$$
\end{theorem}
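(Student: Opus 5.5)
Since the coloring $c(A)=\min\{\min(A),n-sk+s\}$ already shows $\chi(\kgstable{n}{k}{s})\le n-sk+s$, only the lower bound needs proof. I will argue by induction on $n$, with Meunier's case $n=sk+1$ (or even the trivial case $n=sk$) as a base, and reach the regime $n\ge 2s(k-1)+2$ by an inductive step valid there. Suppose for contradiction that $c$ properly colors $\kgstable{n}{k}{s}$ with $m=n-sk+s-1$ colors. Every color class is an intersecting family of $s$-stable $k$-sets. The plan is to find a color class that is a \emph{star}, i.e.\ all its members contain a common element $x$. Removing that class and then deleting $x$ should give a proper coloring of a smaller stable Kneser graph with one fewer color, which contradicts the induction hypothesis.

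\textbf{Step 1 (removing a star).} Suppose color class $C$ is contained in the star at $x$. Identify the cycle $[n]\setminus\{x\}$ with $C_{n-1}$ by contracting the gap at $x$. Every $s$-stable set in $C_{n-1}$ that avoids $x$ lifts to an $s$-stable set in $C_n$, because distances do not decrease when the point is reinserted. Two lifted sets are disjoint exactly when the originals are. None of the lifted sets contains $x$, so none of them lies in $C$. The remaining $m-1$ colors therefore properly color $\kgstable{n-1}{k}{s}$. By induction that graph needs $n-1-sk+s$ colors, which contradicts $m-1=n-sk+s-2$.

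\textbf{Step 2 (Hilton--Milner type bound).} It remains to show that some class is a star. I would prove that any intersecting non-star family $\mathcal{F}\subseteq\bstable{n}{k}{s}$ has size at most some $h(n,k,s)$ that is much smaller than the star size. The star at $x$ has size
\[
\frac{s}{n}\cdot\frac{n}{k}\binom{n-(s-1)k-1}{k-1}\cdot\frac{k}{s}\approx\binom{n-(s-1)k-1}{k-1}
\]
(more precisely, it counts $(k-1)$-subsets placed in the path of length $n-2s+1$ with gaps of size $s$). For the bound, fix $A\in\mathcal{F}$. Each $B\in\mathcal{F}$ meets $A$, and since $\mathcal{F}$ is not a star, for each $a\in A$ some member of $\mathcal{F}$ avoids $a$. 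Following Hilton--Milner, counting sets that meet both $A$ and a set $B_a$ avoiding $a$ gives a bound of order $k^2\binom{n-(s-1)k-2}{k-2}$.

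\textbf{Step 3 (counting).} If every class were a non-star, then $|\bstable{n}{k}{s}|=\frac{n}{k}\binom{n-(s-1)k-1}{k-1}$ would be at most $m\cdot h$. For $n\ge 2s(k-1)+2$ I would show this fails. The ratio of the star size to $h$ is about $(n-sk)/k^2$, while the total is $n/k$ star sizes, so I need $n/k > m\,k^2/(n-sk)$ roughly. I expect this to be the main obstacle: the crude Hilton--Milner count is too weak near $n\approx 2sk$. The fix I would try is an iterative argument instead of pure counting. Cover the vertex set greedily and use that a non-star intersecting family of stable sets is much smaller than $|\bstable{n}{k}{s}|/m$. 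The hypothesis $s\ge4$ should give enough slack, since each element forbids $2s-1$ positions in its neighborhood, which sharpens the Hilton--Milner estimate by a factor roughly $s$. A sharp stable Hilton--Milner bound, with the extremal non-star family $\{A\}\cup\{B\ni x: B\cap A\ne\emptyset\}$, would give exactly the threshold $2s(k-1)+2$.
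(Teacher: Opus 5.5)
The paper gives no proof of this theorem; it is quoted from Jonsson's unavailable manuscript. The nearest argument in the paper is Lemma~\ref{thm:reduction} combined with Theorem~\ref{thm:tau-atleast-two}, which uses your ``a star class gives a contradiction, otherwise count'' skeleton for $s=3$. Measured against that, your proposal has two genuine gaps, and together they mean it does not prove the statement.

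\textbf{First, the induction has no foundation.} Step 1 is a correct reduction, but it uses the theorem at $n-1$.

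At the threshold $n=2s(k-1)+2$, this means you need the statement for $n-1=2s(k-1)+1$. That case is not part of the theorem. For odd $s$ and $k\ge 3$ it is not covered by any result quoted in the paper.

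Starting instead from Meunier's case $n=sk+1$ would require Step 3 to produce a star class at every $n\in[sk+2,\,2s(k-1)+1]$. You do not claim this, and it fails (see below).

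The paper avoids induction altogether with a sharper version of your lifting. When a star class centred at $x$ is removed, reinserting $x$ lengthens the gap across $x$ by one. So not only $\bstable{n-1}{k}{3}$ but the larger family $\bstable{n-1}{k}{\vec{s}}$ with $\vec{s}=(3,\dots,3,2)$ lifts to $3$-stable sets avoiding $x$. Theorem~\ref{cor:almost-3stable} gives the chromatic number of this family for all $n-1\ge 3k-1$, so a single star class is already a contradiction. For $s\ge 4$ you would need an analogous unconditional input, for instance for $\vec{s}=(s,\dots,s,s-1)$.

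\textbf{Second, Step 3 cannot reach a linear threshold, however sharp Step 2 is.} At $n=2s(k-1)+2$ one has $n-sk+s-1=s(k-1)+1=n/2$. So the average class of a hypothetical $(n-sk+s-1)$-coloring has size exactly $\frac{2}{k}\binom{n-(s-1)k-1}{k-1}$, i.e.\ a $\frac{2}{k}$-fraction of a star.

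By contrast, the Hilton--Milner-type family $\{A\}\cup\{B\ni x: B\cap A\neq\emptyset\}$ that you mention is a large fraction of a star. For fixed $s$ it approaches a full star as $k$ grows.

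Concretely, take $k=s=4$ and $n=26$:
\begin{itemize}
\item There are $\frac{26}{4}\binom{13}{3}=1859$ vertices, so a $13$-coloring has average class size $143$.
\item Take $x=1$ and $A=\{5,11,17,23\}$. There are $286$ stable sets through $1$, of which $117$ avoid $A$. This gives a non-star intersecting family of size $286-117+1=170$.
\end{itemize}
So pure counting gives no contradiction here. Your claim that a sharp stable Hilton--Milner bound yields exactly $2s(k-1)+2$ is false, and the ``greedy covering'' fix is not an argument.

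The star-or-count method only yields polynomial thresholds; this is why the paper needs $n\ge k^3+3k^2$ for $s=3$. Jonsson's linear bound requires an idea that is missing from your proposal.
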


In 2015, P. Chen~\cite{Chen-15} resolved Meunier's conjecture for even $s$. 
\begin{theorem}[Chen \cite{Chen-15}]
    \label{thm:chen-even-s}
    If  $s \geq 4$ even and $n \geq sk$ then
    $$\chi(\kgstable{n}{k}{s}
    )= n - sk +s.$$
\end{theorem}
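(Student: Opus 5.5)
The upper bound $\chi(\kgstable{n}{k}{s})\le n-sk+s$ is already given by the coloring $c(A)=\min\{\min(A),n-sk+s\}$. So the content of Theorem~\ref{thm:chen-even-s} is the lower bound. My plan is to reduce it to Schrijver's theorem by a contraction argument that uses the evenness of $s$. Write $s=2t$.

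\textbf{Step 1: a topological lower bound.} For $s$ even, I will show that the box complex (equivalently, the neighborhood complex) of $\kgstable{n}{k}{s}$ is at least as highly connected as that of a Schrijver graph $\kgstable{n'}{k'}{2}$ with $n'-2k'+2=n-sk+s$. Concretely, I will take $\Delta_2$ to be the simplicial complex of $2$-stable subsets of the cycle $C_n$ that contain no $k$-set of $\bstable{n}{k}{s}$. By the Lov\'asz--Schrijver machinery (Borsuk--Ulam applied to the $\mathbb{Z}_2$-space built from sign vectors), it suffices to show that the free $\mathbb{Z}_2$-space of alternating sign vectors on $C_n$ containing no $s$-stable $k$-subset of either sign has $\mathbb{Z}_2$-index at most $n-sk+s-2$ (i.e.\ a suitable coindex bound).

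\textbf{Step 2: evenness via blocks.} When $s=2t$, a sign pattern on $C_n$ whose same-sign positions are $s$-stable decomposes naturally into runs of length $t$. I will construct an equivariant map from the relevant $\mathbb{Z}_2$-complex of $\kgstable{n}{k}{2t}$ to the alternating-sign sphere used by Schrijver. The key idea is to track the sign changes in the cyclic sequence and show that the number of alternations is controlled by $n/t$ minus a term in $k$. Equivalently, I will prove via a ``folding'' or star-cluster argument that the box complex $B_0(\kgstable{n}{k}{2t})$ admits a $\mathbb{Z}_2$-map from $\bS^{n-sk+s-2}$. I expect to do this by exhibiting an explicit $\mathbb{Z}_2$-equivariant map from the boundary of a suitable cross-polytope $\bS^{n-sk+s-2}$ into the box complex. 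The construction will use a cyclic ordering of vertices of $\bstable{n}{k}{s}$ in which consecutive stable sets shift by $1$, which is where the evenness of $s$ is needed to make antipodality consistent.

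\textbf{Step 3: conclude.} Once the map $\bS^{n-sk+s-2}\to_{\mathbb{Z}_2} B_0(G)$ exists, the standard inequality $\chi(G)\ge \operatorname{coind}(B_0(G))+1$ gives $\chi\ge n-sk+s$.

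\textbf{Main obstacle.} The hard step is Step 2: producing the equivariant map and verifying that it is continuous and that it lands in the box complex, i.e.\ that positive and negative supports always contain disjoint $s$-stable sets. I would first attempt induction on $n$ via Meunier's case $n=sk+1$ as a base, handling the inductive step by showing the index increases by one when $n$ increases by one. I would check the parity condition explicitly in small cases, e.g.\ $s=4$, $k=2$.
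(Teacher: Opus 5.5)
The paper gives no proof of this statement (it is quoted from Chen~\cite{Chen-15}), so your outline has to stand on its own. It does not: there is a genuine gap. Everything that depends on $s$ being even is deferred to Step~2, which is only announced (``I will construct'', ``I expect to do this'') and never carried out. Steps~1 and~3 are the generic Borsuk--Ulam template and would apply verbatim to odd $s$.

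Step~1 is also not a reduction to Schrijver's theorem. Asserting that the box complex of $\kgstable{n}{k}{s}$ is as highly connected as that of a Schrijver graph with $n'-2k'+2=n-sk+s$ is the theorem itself in topological form. The natural comparison, the homomorphism $A\mapsto tA$ from $\kgstable{n'}{k}{2}$ into $\kgstable{tn'}{k}{2t}$, only gives $\chi\ge n'-2k+2=(n-sk+s)/t$ for $n=tn'$, $s=2t$.

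Your parity heuristics do not lead to a construction. Same-sign positions being $s$-stable forces no ``runs of length $t$'': take $(+,0,-,0,+,0,-,0)$ on $C_8$ with $s=4$. An ordering in which consecutive sets are shifted by $1$ is a $\mathbb{Z}_n$-symmetry, unrelated to the antipodal $\mathbb{Z}_2$-action or to the parity of $s$. The induction on $n$ from Meunier's case $n=sk+1$ has no mechanism: passing from $n$ to $n+1$ changes the whole family of stable sets, and nothing explains why the (co)index should grow by one.

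The bookkeeping is also off. The complex you describe in Step~1 (nonzero sign vectors $X$ such that neither $X^+$ nor $X^-$ contains a member of $\bstable{n}{k}{s}$) is the deleted join $\Sigma$ of $\sfK(\bstable{n}{k}{s})$. Sarkaria's inequality reads $\chi\ge n-\mathrm{ind}(\Sigma)-1$, so you need $\mathrm{ind}(\Sigma)\le s(k-1)-1$, not $\le n-sk+s-2$; the latter would only give $\chi\ge sk-s+1$. Section~\ref{sec:top} obtains exactly the needed bound for $k=2$, $s-1\mid n$, via an explicit embedding of $\sfK(\bstable{n}{2}{s})$ into $\R^{s-1}$. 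That is the kind of concrete object your Step~2 lacks.

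Step~3 is off by one. A $\mathbb{Z}_2$-map $\bS^{n-sk+s-2}\to B_0(G)$ combined with $\chi(G)\ge\mathrm{coind}(B_0(G))+1$ gives only $\chi\ge n-sk+s-1$. You need $\bS^{n-sk+s-1}$, or else work with $B(G)$, which (unlike $B_0(G)$) corresponds to the neighborhood complex, together with $\chi\ge\mathrm{coind}(B(G))+2$.

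Here is where evenness genuinely enters, and what remains hard. If $X$ has an alternating subsequence $i_1<\dots<i_m$, then $i_1,i_{1+s},\dots,i_{1+(k-1)s}$ all carry the same sign precisely because $s$ is even, and consecutive ones are at least $s$ apart. But the cyclic gap from $i_{1+(k-1)s}$ back to $i_1$ is guaranteed to be at least $s$ only when $m\ge sk$. An alternation-type bound reaching $n-s(k-1)$ would need this already for $m=s(k-1)+1$. The shortfall is real: $X=(+,0,-,+,-,0,+,0,0)$ on $C_9$ with $s=4$, $k=2$ lies in $\Sigma$ but has alternation number $5>s(k-1)$ in the natural order. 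Overcoming this cyclic deficit of $s-1$ is where an actual argument is needed, and your proposal does not address it.
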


Moreover, the conjecture was proved for $k=2$. 
\begin{theorem}[Daneshpajouh, Meunier, and Mizrahi \cite{DaneshMeun-21}]
Let $n \ge 2s$. Then $$\chi(\kgstable{n}{2}{s}
    )= n - s.$$
\end{theorem}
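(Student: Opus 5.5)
We only need the lower bound $\chi(\kgstable{n}{2}{s})\ge n-s$, since the upper bound comes from the coloring $c(A)=\min\{\min(A),n-2s+s\}$ given above. The plan is purely combinatorial and uses the very simple structure of intersecting families of $2$-sets. Each color class of a proper coloring of $\kgstable{n}{2}{s}$ is an intersecting family of $s$-stable pairs. Any intersecting family of $2$-sets is either a \emph{star}, meaning all its pairs contain a common point $x$, or is contained in a \emph{triangle} $\binom{T}{2}$ with $|T|=3$. In the triangle case we may assume the pairs of $T$ are pairwise at distance $\ge s$.

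So fix a coloring with star centers $x_1,\dots,x_a$ and triangles $T_1,\dots,T_b$. Let $R=[n]\setminus\{x_1,\dots,x_a\}$ and $r=|R|=n-a$. Let $H$ be the graph on $[n]$ whose edges are the pairs at cyclic distance $\ge s$. Every edge of the induced graph $H[R]$ must be covered by one of the triangles. It therefore suffices to prove the following claim.

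\textbf{Claim.} For any $R\subseteq[n]$ with $n\ge 2s$, covering the edges of $H[R]$ by triangles of $H$ requires at least $|R|-s$ triangles.

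Given the claim, $b\ge r-s=n-a-s$, so the number of colors is $a+b\ge n-s$.

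I would prove the claim by induction on $r=|R|$. The case $r\le s$ is trivial. Now suppose $r>s$. The key auxiliary fact is:

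\textbf{(Key fact.)} If $n\ge 2s$ and $|R|>s$, then $H[R]$ has an edge, i.e. some two points of $R$ are at cyclic distance $\ge s$.

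Assuming this, some $v\in R$ is non-isolated in $H[R]$, so $v$ lies in at least one of the covering triangles. Delete $v$ together with all covering triangles that contain $v$. The remaining triangles still cover every edge of $H[R\setminus\{v\}]$, because an edge avoiding $v$ was covered by a triangle, and if that triangle contains $v$ we may replace it by nothing, since it covers at most the one edge of $\binom{T\setminus\{v\}}{2}$. To be careful, I would instead keep those triangles but count them only once. Induction then gives $b\ge 1+(r-1-s)=r-s$. The careful version: the triangles not containing $v$ together with the triangles containing $v$ number $b$; the latter is at least $1$; all of them together cover $H[R\setminus\{v\}]$, hence $b\ge r-1-s$ with a strict gain of one. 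To get that strict gain, I would choose $v$ so that its incident edges force a triangle whose other two points still contribute. If this bookkeeping is delicate, I would fall back to a cleaner induction: pick $v$ as an endpoint of a longest edge, and argue directly that triangles through $v$ can be dropped after relabeling.

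The step I expect to be the main obstacle is the key fact, i.e. showing that a set of more than $s$ points on $C_n$ with $n\ge 2s$ cannot have all pairwise cyclic distances $\le s-1$. My first attempt: take two points $p,q\in R$ at maximal cyclic distance $d\le s-1$. Every other point of $R$ lies within distance $s-1$ of both $p$ and $q$. Because $n\ge 2s$, such points either lie on the short arc between $p$ and $q$, which has at most $d+1\le s$ points counting the endpoints, or lie beyond one endpoint. A point beyond an endpoint would contradict the maximality of $d$, unless the points wrap around the cycle. The wrap-around configurations, such as near-equally spaced points when $n$ is close to $2s$, are where the real care is needed. There I would use a Helly-type argument for circular arcs of length $<n/2$: pairwise intersecting arcs shorter than a half-circle have a common point. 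This puts all of $R$ inside an arc of $s$ consecutive points, so $|R|\le s$.
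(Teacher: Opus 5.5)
\textbf{There is a genuine gap in the inductive step of your Claim.}

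Your reduction itself is sound. Each color class is a star or the three pairs of a triangle of $H$, and the edges of $H[R]$ must be covered by the triangle classes. But the reduction loses nothing: stars at the points of $[n]\setminus R$ together with any triangle cover of $H[R]$ give a proper coloring with $n-|R|+b$ colors. So the Claim is \emph{equivalent} to the theorem, and its inductive step carries the entire difficulty.

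That step is not established.
\begin{itemize}
\item Deleting $v$ together with the triangles through $v$ need not leave a cover of $H[R\setminus\{v\}]$, because an edge $uw$ may be covered only by $\{u,v,w\}$.
\item If you keep all $b$ triangles, the induction hypothesis gives only $b\ge (r-1)-s$.
\item The ``strict gain of one'' needs some triangle to become redundant after removing $v$, i.e.\ every edge of it avoiding $v$ is covered elsewhere. Nothing in your argument provides this. It fails whenever the triangles lie in $R$ and cover each edge of $H[R]$ exactly once, since each triangle through $v$ still carries its opposite edge.
\item The fallback (an endpoint of a longest edge, ``after relabeling'') is not an argument.
\end{itemize}
Peeling off one vertex with a gain of one color works exactly for a \emph{star} class, and you have already removed those. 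In the all-triangle situation you need a genuinely different lower bound on triangle covers. A plain edge count, $b\ge |E(H[R])|/3\ge |R|(|R|-2s+1)/6$, only settles roughly $|R|\ge 2s+3$. The middle range of $|R|$ is where a new idea is required. For context, the paper does not prove this statement in general: it quotes Daneshpajouh--Meunier--Mizrahi and reproves only the case $s-1\mid n$, $n\ge 3s-2$, topologically via Sarkaria's embedding theorem.

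\textbf{Your proof of the key fact is also wrong, although the fact is true.} Pairwise intersecting arcs shorter than a half-circle need not share a point. Your conclusion that $R$ lies in $s$ consecutive points is false: for $n=9$, $s=4$, the set $\{1,4,7\}$ has all pairwise distances $3=s-1$ but lies in no $4$ consecutive points.

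A correct proof is short. Fix $p\in R$; then all of $R$ lies in $[p-s+1,p+s-1]$. For each $1\le j\le s-1$, the points $p+j$ and $p+j-s$ are at cyclic distance exactly $s$, since $n\ge 2s$, so at most one of them is in $R$. Hence $|R|\le 1+(s-1)=s$.

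So the step you expected to be the main obstacle is routine, while the step you treated as bookkeeping is the actual content of the theorem.
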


Thus, except for the case $k = 2$, Meunier's conjecture for $s=3$ remained wide open. 
In this paper we resolve the conjecture for $s=3$ and  $n$ large enough, or when $k=s=3$. We prove:
\begin{theorem}
\label{thm:main:k>=4}
Let $k \geq 4$ and $n \geq k^3+3k^2.$ Then $$\chi(\KGk) = n-3k +3.$$  
\end{theorem}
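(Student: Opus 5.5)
Only the lower bound $\chi(\KGk) \ge n-3k+3$ needs proof, since the upper bound comes from the coloring $c(A)=\min\{\min(A),n-3k+3\}$. I would argue by induction on $n$, with a Hilton--Milner type stability theorem for $3$-stable families doing the main work. (The abstract announces such theorems.)

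\textbf{Base of the induction.} To start the induction we need $\chi \ge n_0-3k+3$ for a suitable starting value $n_0$. I would take $n_0=3k$, where the statement is trivial, or $n_0=3k+1$, where it is Meunier's result. The catch is that the Hilton--Milner step below only works once $n$ is large. So for small $n$ I would need a different lower bound. The natural candidate is monotonicity: $\chi$ increases by at least one when $n$ increases by one. That is exactly the statement the induction step proves, so perhaps the threshold can be avoided. If not, I would instead run the induction for all $n\ge 3k$, using the weaker bound in the small range. I expect the bookkeeping here to force the hypothesis $n\ge k^3+3k^2$.

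\textbf{Induction step.} Suppose $\KGk$ has a proper coloring with $n-3k+2$ colors. Each color class is an intersecting $3$-stable family. I would prove a Hilton--Milner analogue:
\begin{quote}
If $\mathcal{F}\subseteq\bstable{n}{k}{3}$ is intersecting and not a star (not all sets share a common element), then $|\mathcal{F}| \le C k^2\binom{n-2k-1}{k-2}$, a bound of order $k^2$ times the number of $3$-stable $(k-1)$-sets avoiding a fixed window.
\end{quote}
The proof would follow the Hilton--Milner idea. Fix $A\in\mathcal{F}$ avoiding a point $x$ that another member of $\mathcal{F}$ contains; every set in $\mathcal{F}$ then meets $A$. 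Counting $3$-stable sets through prescribed pairs gives a bound of the order $\sum_{a\in A}\sum_{b\in B}$ of the number of $3$-stable sets containing both $a$ and $b$, roughly $k^2$ times the number of $3$-stable $(k-2)$-subsets of a cycle of length about $n$.

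A star at $i$ has size about $\frac{n}{n-3k}\cdot\frac{k}{n}|\bstable{n}{k}{3}|$, since $|\bstable{n}{k}{3}|=\frac{n}{n-2k}\binom{n-2k}{k}$. Compare this with the total number of vertices divided by $n-3k+2$. When $n\gtrsim k^3$, the non-star classes are too small to cover their share. So some color class is contained in a star at some $i$, and after averaging I would pick such a class.

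\textbf{Contraction.} Remove that color class and delete the element $i$ from the ground cycle, identifying $[n]\setminus\{i\}$ with the cycle $[n-1]$. A $3$-stable set of $[n-1]$ lifts to a $3$-stable set of $[n]$ avoiding $i$ whenever its elements are not the two endpoints around the gap. I would check that the remaining $n-3k+1$ colors, restricted to the lifts, properly color a graph containing $\KG(n-1,k)_{3\text{-stable}}$. Sets that straddle the removed point create a distance problem: $i-1$ and $i+2$ are at distance $2$ in $[n-1]$ but $3$ in $[n]$, so the correspondence does not preserve $3$-stability there. To fix this I would instead delete a block of consecutive elements, or choose $i$ so that no straddling issue arises. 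This gives $\chi(\KG(n-1,k)_{3\text{-stable}})\le n-3k+1$, contradicting the induction hypothesis.

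\textbf{Main obstacle.} I expect the hardest step to be the stable Hilton--Milner bound together with the averaging that shows some color class lies inside a star. This has to be quantitative enough to give the threshold $k^3+3k^2$, and the contraction must be made compatible with $3$-stability.
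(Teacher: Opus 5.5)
\textbf{Gap 1: the induction has no base case.} Your star-versus-non-star step works only when the non-star bound is below the average class size $\frac{n}{k(n-3k+2)}\binom{n-2k-1}{k-1}$, so only for $n$ above some threshold $n_1$. The step $n-1\to n$ is therefore available only for $n\ge n_1$. The base case you then need, $\chi(\kgstable{n_1-1}{k}{3})\ge n_1-3k+2$, is Meunier's conjecture itself in a range where it is not known. Starting at $n_0=3k$ or $3k+1$ does not help, because the step is unavailable between $n_0$ and $n_1$, and monotonicity is exactly what you cannot prove there. Feeding in a weaker known bound (say $\chi\ge n-4k+4$ from the $4$-stable subgraph) only carries the same additive defect upward. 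You need an input valid for all $n$, and the paper supplies it topologically. Rotate the star center to $n$. The remaining $n-3k+1$ colors then properly color every $3$-stable $k$-set avoiding $n$, and this family is exactly $\bstable{n-1}{k}{\vec{s}}$ with $\vec{s}=(3,\dots,3,2)$. By Daneshpajouh--Oszt\'enyi (Theorem~\ref{cor:almost-3stable}), its Kneser graph has chromatic number $(n-1)-3k+3$ for all $n-1\ge 3k-1$. This gives the contradiction in one step with no induction (Lemma~\ref{thm:reduction}). Your straddling worry also points the wrong way. Every $3$-stable set of $C_{n-1}$ lifts to a $3$-stable set of $C_n$ avoiding $i$, since distances only grow. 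The sets with a gap of exactly $3$ across $i$ are extra sets you keep, and keeping them is what puts you in the setting of that theorem.

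\textbf{Gap 2: your Hilton--Milner bound is too weak for the stated range}, even after that fix. Summing $|\mathcal{F}_{a,b}|$ over $k^2$ pairs gives about $k^2\binom{n-2k-2}{k-2}$. The average class size is $\frac{n(n-2k-1)}{k(k-1)(n-3k+2)}\binom{n-2k-2}{k-2}\approx\frac{n}{k(k-1)}\binom{n-2k-2}{k-2}$, so your bound suffices only for $n\gtrsim k^4$. For $k=4$ and $n=112=k^3+3k^2$, it gives $16\binom{102}{2}=82416$ against an average of about $48547$. You need an estimate of order $k\binom{n}{k-2}$, which the paper gets by splitting on $\tau(\mathcal{F})$:
\begin{itemize}
\item If $\tau\ge 3$, Frankl--Kupavskii (Theorem~\ref{thm:cover-number-bound}) gives $k^3\binom{n-3}{k-3}$. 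This is at most $\binom{n-2}{k-1}-\binom{n-k-1}{k-1}$ for $n\ge k^3$.
\item If $\tau=2$ with cover $\{x,y\}$, the sets through both $x$ and $y$ number at most $\binom{n-2k-2}{k-2}$. The links $\mathcal{H}_x',\mathcal{H}_y'$ are nonempty cross-intersecting $(k-1)$-uniform families on $n-2$ points. So Frankl--Wang (Theorem~\ref{thm:Frankl-Wang-Cross-intersecting-estimate}) gives $|\mathcal{H}_x|+|\mathcal{H}_y|\le\binom{n-2}{k-1}-\binom{n-k-1}{k-1}+1\le (k-1)\binom{n-3}{k-2}+1$.
\end{itemize}
This order-$k$ estimate, compared with the average class size (Lemma~\ref{lem:three>one}), is what yields the threshold $n\ge k^3+3k^2$.
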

\begin{theorem}
\label{thm:k=3}
    For $n \geq 9$, $$\chi(\kgstable{n}{3}{3}) = n - 6.$$
\end{theorem}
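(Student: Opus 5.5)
The upper bound $n-6$ is given by the coloring $c(A)=\min\{\min(A),n-6\}$, so the work is the lower bound $\chi(\kgstable{n}{3}{3})\ge n-6$. We argue by induction on $n$.

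For the base case $n=9$ we need $\chi\ge 3$. The $3$-stable $3$-subsets of $[9]$ are exactly $\{1,4,7\},\{2,5,8\},\{3,6,9\}$. They are pairwise disjoint and so form a triangle, giving $\chi=3$. The case $n=10$ is Meunier's result for $n=sk+1$, and it needs $\chi\ge 4$.

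For the inductive step, suppose $c$ is a proper coloring of $\kgstable{n}{3}{3}$ with $n-7$ colors, where $n\ge 11$. The strategy is to find one color class whose removal leaves a subgraph containing a copy of $\kgstable{n-1}{3}{3}$. That copy would then be colored with $n-8$ colors, contradicting induction.

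Each color class is an intersecting family of $3$-stable triples. We classify the classes as follows.
\begin{itemize}
\item A class is a \emph{star} if all its members share a common element.
\item Otherwise it is non-trivial. For $3$-sets, a non-trivial intersecting family is either contained in a triangle-type family $\{A: |A\cap T|\ge 2\}$ for a fixed $3$-set $T$, or of Hilton--Milner type with a bounded number of sets. A $3$-stable version of the Hilton--Milner theorem bounds its size by $O(n)$.
\end{itemize}
Star classes centered at a point $i$ are harmless. The idea is to delete element $i$ and contract the cycle: identify the $3$-stable triples of $[n]\setminus\{i\}$, viewed on the cycle of length $n-1$, with triples avoiding $i$. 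Triples on $C_{n-1}$ that straddle the deleted point have a gap one shorter in $C_n$, so they are only $2$-stable there, and the correspondence fails for them. To repair this, we choose $i$ and use the freedom of rotation to select the class to delete.

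The cleaner route is counting. Deleting a point $i$ and mapping $C_{n-1}$ into $C_n$ by inserting a gap at $i$ gives an injection
\[
\varphi_i:\bstable{n-1}{3}{3}\to \bstable{n}{3}{3},
\]
obtained by shifting the elements after position $i$. This map preserves disjointness. Hence the image $\varphi_i(\cdot)$ induces a copy of $\kgstable{n-1}{3}{3}$ inside $\kgstable{n}{3}{3}$, and $c\circ\varphi_i$ is a proper coloring of $\kgstable{n-1}{3}{3}$ with $n-7$ colors.

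We want to show that for some $i$, the composite $c\circ\varphi_i$ misses a color. Every image set $\varphi_i(B)$ contains none of the points $i$, and it has a gap of length at least $4$ across the position of $i$. So a color class all of whose members have a short gap at position $i$, or all contain $i$, vanishes in $c\circ\varphi_i$. This gives a free color and the contradiction.

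We then average over $i\in[n]$:
\begin{itemize}
\item A star class with center $x$ disappears for $i=x$.
\item For a non-trivial class, its members contain or straddle only a bounded set of positions, namely the $O(1)$ points of a covering triple $T$. For $n$ large relative to $k=3$, we show that some class must be forced. Concretely, the color classes of $c\circ\varphi_i$ are all nonempty for every $i$ only if each color class contains sets with long gaps at every position. We then show via the stable Hilton--Milner bound that such ``spread'' intersecting families are too small to cover all $\Theta(n^2)$ stable triples with only $n-7$ colors.
\end{itemize}

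The main obstacle is this final counting step for small $n$, such as $11\le n\le$ some explicit bound. There the size estimates are not strong enough, and we expect to need a direct case analysis of the maximal intersecting families of $3$-stable triples on $C_n$, possibly computer-assisted, which would then be extended to all $n$ via the induction above.
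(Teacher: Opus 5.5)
Your handling of star classes is correct and differs from the paper. If a class has center $x$, then $c\circ\varphi_x$ is a proper coloring of $\kgstable{n-1}{3}{3}$ with at most $n-8$ colors; after rotating $x$ to $n$, $\varphi_x$ is just the inclusion of $3$-stable triples of $C_{n-1}$ into $C_n$. So your induction replaces the paper's appeal to the Daneshpajouh--Oszt\'enyi theorem for $\vec{s}=(3,3,2)$.

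When no class is a star, the averaging over $i$ and the notion of ``spread'' families are unnecessary. Your claim that a non-star class contains or straddles only $O(1)$ positions is also false, e.g.\ for $\cF_3(T)$. What you need is the paper's plain count, with two corrections:
\begin{itemize}
\item $|\bstable{n}{3}{3}|=\frac{n}{3}\binom{n-7}{2}=\frac{n(n-7)(n-8)}{6}$ is cubic, not $\Theta(n^2)$.
\item You need the explicit bound $|\cF|\le 3n-28$ of Theorem~\ref{thm:stable-HM}, not merely $O(n)$, because the constant determines where the count starts to work. That bound is proved only for $n\ge 4s+3=15$.
\end{itemize}
With it, $(n-7)(3n-28)<\frac{n(n-7)(n-8)}{6}$ is equivalent to $n^2-26n+168>0$. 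This holds for $n\ge 15$ but fails for $12\le n\le 14$.

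The genuine gap is the range $11\le n\le 14$, which you explicitly leave open (``possibly computer-assisted''). In your scheme this is fatal rather than cosmetic. Because your star case is inductive, $n=15$ relies on $n=14$, so every $n\ge 15$ is conditional on these four values. The paper's argument for $n\ge 15$, by contrast, is self-contained.

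For these values neither ingredient of your non-star case is available. Theorem~\ref{thm:stable-HM} is not proved below $n=15$, and even granting $3n-28$, the count fails for $12\le n\le 14$. The paper closes exactly this range (in fact all of $9\le n\le 14$) with an exhaustive SAT computation showing that $n-7$ colors do not suffice. Until you supply that verification or a hand argument for these cases, your induction never gets past $n=10$.
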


For our proofs we obtain versions of the Hilton-Milner theorems for $s$-stable sets.  

\subsection{Hilton-Milner theorems for $s$-stable sets}
For the proof of Theorems \ref{thm:k=3} and \ref{thm:main:k>=4} we wish to bound the maximum size of a color class in a proper coloring of $\kgstable{n}{k}{s}$. Note that a color class in a Kneser graph is a family of sets in which every two sets intersect. 

A family of sets in which every two sets intersect is called {\em intersecting}. 
An intersecting family $\cF$ is a {\em star} if $\bigcap_{F\in \mathcal{F}} F \neq \emptyset$.  Otherwise, we say that $\mathcal{F}$  is a {\em non-star}. If $\cF$ is a star, then an element $z\in \bigcap_{F\in \mathcal{F}} F$ is a {\em center} of $\cF$.
Two families of sets $\mathcal{A}$ and $\mathcal{B}$ are \emph{cross-intersecting} if $A \cap B \neq \emptyset$ for every $A \in \mathcal{A}$ and $B \in \mathcal{B}$. A family of sets is $k$-{\em uniform} if all its sets are of size $k$. 

In 1961, Erd\H{o}s, Ko, and Rado~\cite{EKR-61} proved a tight upper bound on the size of an intersecting $k$-uniform family.
\begin{theorem}[Erd\H{o}s-Ko-Rado \cite{EKR-61}] Let $n \geq 2k$.
    If $\mathcal{F} \subseteq \binom{[n]}{k}$ is an intersecting family,  then
    $$|\mathcal{F}| \leq \binom{n - 1}{k - 1}.$$
    Moreover, if $n > 2k$, equality holds if and only if $\mathcal{F}$ is the star consisting of all the $k$-subsets $[n]$ containing the element $i$, for some $i\in [n]$.
\end{theorem}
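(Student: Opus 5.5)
The statement is the classical Erd\H{o}s--Ko--Rado theorem, and I would prove it with Katona's cyclic permutation argument. For the uniqueness part I would add the standard refinement of analyzing the arcs.

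\textbf{The bound.} Fix a cyclic ordering $\sigma$ of $[n]$, and call a $k$-set an \emph{arc} of $\sigma$ if its elements are consecutive in $\sigma$; there are $n$ arcs. The key lemma is that if $n\ge 2k$, an intersecting family contains at most $k$ arcs of a fixed $\sigma$. To prove it, suppose the arc $A=\{\sigma_1,\dots,\sigma_k\}$ belongs to $\cF$. Every other arc meeting $A$ starts or ends strictly inside $A$, at one of the $k-1$ ``gaps'' between $\sigma_i$ and $\sigma_{i+1}$. The two arcs associated with the same gap (one ending at $\sigma_i$, one starting at $\sigma_{i+1}$) are disjoint when $n\ge 2k$, so at most one of them lies in $\cF$. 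Hence $\cF$ contains at most $1+(k-1)=k$ arcs of $\sigma$.

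Next, double count pairs $(\sigma,F)$ with $F\in\cF$ an arc of $\sigma$. There are $(n-1)!$ cyclic orders, and each $k$-set is an arc of exactly $k!(n-k)!$ of them. Therefore
$$|\cF|\,k!(n-k)!\le k(n-1)!,$$
which gives $|\cF|\le\binom{n-1}{k-1}$.

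\textbf{Equality for $n>2k$.} If equality holds, then every cyclic order $\sigma$ has exactly $k$ arcs in $\cF$. Refining the lemma, when $n>2k$ these $k$ arcs must be exactly the $k$ arcs containing a common element $\sigma_j$; that is, they form a consecutive block of starting positions of length $k$. The reason is that a chosen arc ending at $\sigma_i$ forces, through disjointness with later arcs, a monotone pattern in the choices at the gaps. When $n>2k$, an arc also cannot meet another arc ``from both sides'', which rules out the wrap-around configurations that occur when $n=2k$.

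I would then show that the common element does not depend on $\sigma$. Take two cyclic orders that differ by an adjacent transposition. If their centers differed, one could find two disjoint sets in $\cF$. Since adjacent transpositions connect all cyclic orders, some fixed $i$ lies in all arcs of $\cF$ in every $\sigma$. Every $F\in\cF$ is an arc of some $\sigma$, so every $F$ contains $i$. By the size count, $\cF$ is then the full star at $i$.

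\textbf{Main obstacle.} The delicate step is this equality analysis: proving that the $k$ arcs form a consecutive block, and then propagating the center across different $\sigma$. A simpler alternative I would try first is to invoke the Hilton--Milner bound, which states that a non-star intersecting family satisfies $|\cF|\le\binom{n-1}{k-1}-\binom{n-k-1}{k-1}+1<\binom{n-1}{k-1}$ for $n>2k$. However, that bound is essentially what the paper develops next, so the self-contained route is the arc analysis above.
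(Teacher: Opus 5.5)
The paper gives no proof of this theorem; it is quoted from \cite{EKR-61} as background, so your argument has to stand on its own. Your bound via Katona's cycle method is correct and complete. So is the reduction of the equality case to the statement that every cyclic order $\sigma$ has exactly $k$ arcs in $\cF$, and that their starting positions form a block $I=\{m-k+1,\dots,m\}$, i.e.\ they are the arcs through position $m$. The operative fact there is that for $n\ge 2k+1$ the arc ending at $\sigma_i$ and the arc starting at $\sigma_{i+2}$ are disjoint. Your remark about arcs meeting ``from both sides'' is not what separates $n>2k$ from $n=2k$.

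\textbf{The gap is the propagation of the center.} You claim that two cyclic orders differing by an adjacent transposition, with different centers, yield two disjoint members of $\cF$. This is false as a local statement.

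\emph{Counterexample.} Swap positions $m$ and $m+1$ of $\sigma$ to get $\sigma'$. Only the arcs ending at position $m$ and starting at position $m+1$ change as sets. So the block of $\sigma'$ contains $\{m-k+2,\dots,m\}$, and it may equal $\{m-k+1,\dots,m\}$. Then the center of $\sigma'$ is $\sigma_{m+1}\neq\sigma_m$. Yet the sets these two orders force into $\cF$ are pairwise intersecting: the $k$ arcs of $\sigma$ through $\sigma_m$, together with $\{\sigma_{m-k+1},\dots,\sigma_{m-1},\sigma_{m+1}\}$. So the two orders give no contradiction, and your claim holds only a posteriori, once $\cF$ is known to be a star.

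\emph{Repair.} Use only transpositions that do not move the current center $x=\sigma_m$. If $m\notin\{p,p+1\}$, the starting positions whose arcs change are $p-k+1$ and $p+1$, and at most one of them lies in $I$. If neither does, the new block contains $I$, so it equals $I$. If one does, then for $n>2k$ the other is not adjacent to $I$. A block of $k$ consecutive positions that contains the remaining $k-1$ points of $I$ and lies inside $I$ plus that non-adjacent point must be $I$ itself. Hence the center is still $x$, which sits at position $m$ untouched. Fixing $x$ identifies cyclic orders with linear orders of the other $n-1$ elements, and adjacent transpositions generate $S_{n-1}$. So every cyclic order has center $x$, and the rest of your argument goes through.

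Your Hilton--Milner shortcut would also be legitimate. The paper merely cites that theorem, and for $n>2k$ one has $\binom{n-k-1}{k-1}\ge k\ge 2$, so non-stars are strictly smaller than $\binom{n-1}{k-1}$.
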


Talbot proved a similar theorem for $s$-stable sets. 
\begin{theorem}[Talbot \cite{Talbot-03}] Let $s \geq 2$ and  $n \geq sk$. 
    If $\mathcal{F} \subseteq \bstable{n}{k}{s}$ is an intersecting family, then
    $$|\mathcal{F}| \leq \binom{n - (s-1)k - 1}{k - 1}.$$
    Moreover, if $s = 2$ and $n \neq 2k + 2$ or $s \geq 3$, equality holds if and only if $\mathcal{F}$ is a star consisting of all the sets in $\bstable{n}{k}{s}$ containing the element $i$, for some $i\in [n]$.
\end{theorem}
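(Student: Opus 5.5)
The plan is to argue by induction on $n$ for fixed $s$ and $k$, in the spirit of the Holroyd--Talbot proofs of EKR-type theorems for separated sets. First I will record why the bound is attained. The $s$-stable $k$-sets containing a fixed $i$ correspond to choosing $k-1$ further points on the path $[n]\setminus\{i-s+1,\dots,i+s-1\}$ of $n-2s+1$ vertices, pairwise at distance at least $s$. A standard stars-and-bars count gives $\binom{n-2s+1-(s-1)(k-2)}{k-1}=\binom{n-(s-1)k-1}{k-1}$ of them.

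The base case is $n=sk$. Here $\bstable{n}{k}{s}$ consists of exactly the $s$ translates of $\{1,1+s,\dots,1+(k-1)s\}$, which are pairwise disjoint. Hence an intersecting family has size at most $1=\binom{k-1}{k-1}$, and the family is trivially a star. For $k=1$ the statement is immediate.

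For the inductive step, take $n>sk$ and an intersecting $\cF\subseteq\bstable{n}{k}{s}$. I will use a compression that shrinks the cycle $C_n$ to $C_{n-1}$ by contracting the edge $\{n-1,n\}$ ($n$ is merged into $n-1$). Applying the map $n\mapsto n-1$ and deleting $n$ sends a set $A$ either to an $s$-stable $k$-subset of $C_{n-1}$, or to a "bad" set. Bad sets are of two kinds:
\begin{itemize}
\item two elements of $A$ would be at distance exactly $s-1$ across the contracted edge;
\item the image collides with the image of another set of $\cF$ (both $A\ni n$ and $A-\{n\}+\{n-1\}$ lie in $\cF$).
\end{itemize}
I will split $\cF=\cF_1\cup\cF_2$, where $\cF_1$ is the family of good sets whose images are distinct, so that $\cF_1$ injects into $\bstable{n-1}{k}{s}$. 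For the bad or colliding sets, the element straddling the contracted edge is forced. Removing it together with the $s-1$ positions that lie within distance $s-1$ of it on one side then yields an injection into $\bstable{n-s}{k-1}{s}$. I will check that both image families are intersecting; this uses that $\cF$ itself is intersecting and that a collision can only occur when both $A$ and its partner are in $\cF$.

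Induction (on $n$ for the first family, on $k$ and $n$ for the second) then gives
\[
|\cF|\le \binom{n-(s-1)k-2}{k-1}+\binom{n-s-(s-1)(k-1)-1}{k-2}=\binom{n-(s-1)k-1}{k-1}
\]
by Pascal's rule, since $n-s-(s-1)(k-1)-1=n-(s-1)k-2$.

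The main obstacle is to verify that the compressed families really are intersecting. The danger is two sets of $\cF$ that met only in $\{n-1,n\}$ in different ways, so that after contraction one element is lost. I would try first to exploit rotational symmetry: choose the edge to contract, by rotating the cycle, so that this bad configuration does not occur. If that fails, I would treat the offending pairs within the second family, where the straddling element is shared.

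For uniqueness I will trace equality through the induction. Equality forces both images to be extremal, hence stars by the induction hypothesis (this requires $s\ge3$, or $s=2$ with $n\neq 2k+2$, where the small exceptional configurations must be excluded). I will then show the two centers must be compatible, i.e.\ they lift to a common element of $[n]$, so that $\cF$ is a full star.
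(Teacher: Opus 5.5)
The paper does not prove this theorem; it quotes it from Talbot, so your outline has to stand on its own. The edge-contraction strategy, with the recursion $g(n,k)=g(n-1,k)+g(n-s,k-1)$ for $g(n,k)=\binom{n-(s-1)k-1}{k-1}$, is the right one. But the step you call the main obstacle is left as ``rotate, or else treat the offending pairs,'' and that step is the substance of the proof. You also locate the danger in the wrong family.

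With $f(n)=n-1$ and $f$ the identity elsewhere, the image of the good sets needs no check, because $f(A)\cap f(B)\supseteq f(A\cap B)$: contraction can only create intersections. Intersections can be lost only in the second family, where points are deleted, and that map is not actually specified. $f(A)$ fails to be $s$-stable exactly when $A\supseteq\{n-s+j,\,j\}$ for some $0\le j\le s-1$ (reading $0$ as $n$). For $s\ge 3$ and $1\le j\le s-2$, such an $A$ contains neither $n-1$ nor $n$, so there is no ``straddling element''; what has to be collapsed is the whole gap from $n-s+j$ to $j$. The relabelling of the remaining $n-s$ points also matters. If you normalize so that the surviving endpoint always lands on the same point, then $\{n-s,n\}\cup R$ and $\{n-s+1,1\}\cup(R+1)$ get the same image, and injectivity fails.

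A choice that works is as follows. Send a bad set $\{n-s+j,j\}\cup R$, where $R\subseteq[j+s,n-2s+j]$, to $\{\bar j\}\cup R$, with $\bar j=j$ for $j\ge 1$ and $\bar 0=n-s$. Send a colliding pair $C\cup\{n-1\},C\cup\{n\}$, which forces $C\subseteq[s,n-s-1]$, to $C$. Images of bad sets meet the window $W=\{n-s,1,\dots,s-1\}$ exactly in $\bar j$, while images of colliding pairs miss $W$. So the map is injective into $\bstable{n-s}{k-1}{s}$.

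The image family is intersecting by a direct check, with no rotation needed:
\begin{enumerate}
\item Two cores $C_1,C_2$ meet, because $C_1\cup\{n-1\}$ meets $C_2\cup\{n\}$.
\item A bad set misses one of $n-1,n$, so it meets $C$. Since $\{n-s+j,j\}$ lies outside $[s,n-s-1]$, it meets $C$ inside $R$.
\item Two bad sets with $j_1\ne j_2$ can meet only in $R_1\cap R_2$. If $j_1=j_2$, their images share $\bar j$.
\end{enumerate}
With this in place, the inequality follows by Pascal's rule as you wrote; without it, the bound is not proved.

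The uniqueness part is only a sketch, and several steps are missing:
\begin{enumerate}
\item At $n=sk+1$ the first image is a single set, so the induction does not single out a center.
\item For $s=2$ and $n=2k+3$, the hypothesis at $(n-1,k)=(2k+2,k)$ gives no uniqueness at all, so this level needs a separate argument.
\item When the center of the first image is the merged point $n-1$, you still have to rule out $\cF$ mixing sets through $n-1$ with sets through $n$.
\item You have to show that the two centers lift to a single point of $[n]$.
\end{enumerate}
None of these is addressed.
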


The bound in the the Erd\H{o}s-Ko-Rado theorem can be improved if we are restricted to  non-star intersecting families. This is the content of the Hilton-Milner theorem \cite{HM-67}.
\begin{theorem}[Hilton-Milner \cite{HM-67}] Let $k \geq 2$ and $n \geq 2k + 1$. 
If $\mathcal{F} \subseteq \binom{[n]}{k}$ is a non-star intersecting family,  then
$$|\mathcal{F}| \leq \binom{n-1}{k-1} - \binom{n - k - 1}{k - 1} + 1.$$
\end{theorem}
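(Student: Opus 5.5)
The statement is the classical Hilton--Milner theorem. I would prove it by shifting followed by a counting argument.

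\emph{Setup.} Let $\cF\subseteq\binom{[n]}{k}$ be a non-star intersecting family of maximum size. We may assume $\cF$ is maximal among intersecting families, so it is an up-set within $\binom{[n]}{k}$ with respect to the intersecting property.

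\emph{Step 1: shift to a shifted family.} Apply the shifts $S_{ij}$ ($i<j$), which replace $j$ by $i$ whenever the resulting set is not already present. Shifting preserves size and the intersecting property. The danger is that a shift may turn a non-star into a star. The standard remedy, due to Frankl, is to shift only while the family stays a non-star. If some $S_{ij}$ would create a star, then every set of $\cF$ meets $\{i,j\}$. In that case we handle $\cF$ directly: split it into $\cF_i$ (sets containing $i$ but not $j$), $\cF_j$ (containing $j$ but not $i$), and sets containing both. Then $\cF_i-\{i\}$ and $\cF_j-\{j\}$ are nonempty cross-intersecting families of $(k-1)$-sets on $[n]\setminus\{i,j\}$. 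This reduces the problem to the cross-intersecting bound in Step 2. Otherwise we reach a shifted non-star intersecting family of the same size.

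\emph{Step 2: count.} The key estimate is the cross-intersecting inequality: if $\mathcal{A},\mathcal{B}\subseteq\binom{[m]}{k-1}$ are nonempty and cross-intersecting with $m\ge 2k-1$, then
\[
|\mathcal{A}|+|\mathcal{B}|\le \binom{m}{k-1}-\binom{m-k+1}{k-1}+1 .
\]
I would prove it by induction on $m$, again using shifting, or via Kruskal--Katona. I would apply it to $\cF(\bar 1)=\{F\in\cF:1\notin F\}$ and $\cF(1)=\{F-\{1\}:1\in F\in\cF\}$. For a shifted non-star family, $\cF(\bar 1)$ is nonempty, and every set of $\cF(1)$ must meet every set of $\cF(\bar 1)$. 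Fix one $B\in\cF(\bar 1)$; note $B$ is a $k$-set avoiding $1$. The number of sets in $\cF(1)$ is then at most the number of $(k-1)$-subsets of $[2,n]$ meeting $B$, which is $\binom{n-1}{k-1}-\binom{n-k-1}{k-1}$. This alone gives only $|\cF(1)|\le\binom{n-1}{k-1}-\binom{n-k-1}{k-1}$, not yet the bound we need. To finish, one must show $|\cF(\bar 1)|$ is compensated: each additional set in $\cF(\bar 1)$ beyond the first removes at least one set from $\cF(1)$. That is exactly the cross-intersecting inequality above, applied in the $k$ versus $k-1$ uniform form on $[2,n]$, namely
\[
|\mathcal{A}|+|\mathcal{B}|\le\binom{n-1}{k-1}-\binom{n-k-1}{k-1}+1
\]
for nonempty cross-intersecting $\mathcal{A}\subseteq\binom{[2,n]}{k-1}$ and $\mathcal{B}\subseteq\binom{[2,n]}{k}$, valid when $n\ge 2k+1$.

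\emph{Main obstacle.} The crux is this unbalanced cross-intersecting inequality. My first attempt would be the Frankl--Tokushige style argument. Shift both families simultaneously, since cross-intersection is preserved under simultaneous shifting. Then induct on $n$ by splitting on the element $n$. The base case, or the case when $\mathcal{B}$ is a single set, gives equality. The inequality $n\ge2k+1$ is used to show the induction step never increases the bound. An alternative I would keep in reserve is the Kruskal--Katona argument. Since $\mathcal{A}$ must avoid the complements of sets in $\mathcal{B}$ within the relevant shadow, $|\mathcal{A}|\le\binom{n-1}{k-1}-|\partial_{n-k-1}\overline{\mathcal{B}}|$. The shadow lower bound then shows the sum is maximized at $|\mathcal{B}|=1$.
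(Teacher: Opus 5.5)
Your Step 1 and the case where a shift would create a star are fine. If $S_{ij}$ would create a star, then $\{i,j\}$ covers $\cF$. The balanced cross-intersecting bound on the $n-2\ge 2(k-1)$ points of $[n]\setminus\{i,j\}$, together with at most $\binom{n-2}{k-2}$ sets containing both $i$ and $j$, gives exactly the Hilton--Milner bound. This is the same decomposition the paper uses in Proposition~\ref{prop:tau-is-two}; the paper itself does not prove this theorem and only cites \cite{HM-67}.

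\textbf{The gap is in Step 2.} The unbalanced inequality you reduce to is false for general nonempty cross-intersecting $\mathcal{A}\subseteq\binom{[2,n]}{k-1}$ and $\mathcal{B}\subseteq\binom{[2,n]}{k}$. Let $\mathcal{B}$ be all $k$-subsets of $[2,n]$ through a point $c$ and $\mathcal{A}$ all $(k-1)$-subsets through $c$. Then $|\mathcal{A}|+|\mathcal{B}|=\binom{n-1}{k-1}$. For $k=2$, $n=5$ this is $\mathcal{A}=\{\{2\}\}$, $\mathcal{B}=\{\{2,3\},\{2,4\},\{2,5\}\}$, giving $4>3$. For $k\ge 3$, forbidding a common element does not help either. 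Take $\mathcal{A}=\{A_0\}$ and $\mathcal{B}$ all $k$-subsets of $[2,n]$ meeting $A_0$; then
\[
|\mathcal{A}|+|\mathcal{B}|=\binom{n-1}{k}-\binom{n-k}{k}+1 .
\]
This is the sharp value in Theorem~\ref{thm:Frankl-Wang-Cross-intersecting-estimate} for $a=k-1<b=k$ on the $n-1$ points of $[2,n]$, and it exceeds your target ($17>13$ for $k=3$, $n=7$). In the unbalanced problem the extremal configuration puts the singleton on the \emph{smaller}-uniformity side. So an induction whose extremal case is $|\mathcal{B}|=1$ is aimed at a false statement, and so is a Kruskal--Katona computation claiming the sum is maximized at $|\mathcal{B}|=1$. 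The heuristic ``each further set of $\cF(\bar{1})$ removes a set of $\cF(1)$'' fails for the same reason.

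\textbf{What is missing is structure on $\mathcal{B}=\cF(\bar{1})$.} Adding the hypotheses ``$\mathcal{B}$ is intersecting and $\mathcal{A}\cup\mathcal{B}$ has no common element'' does not rescue the argument. With them, your inequality is literally the Hilton--Milner theorem for $\{A\cup\{1\}:A\in\mathcal{A}\}\cup\mathcal{B}$, so nothing has been reduced. The real input has to come from shiftedness, which your Step 2 never uses. For shifted $\cF$ and $b\in B\in\cF(\bar{1})$ one has $(B\setminus\{b\})\cup\{1\}\in\cF$, which gives three facts:
\begin{itemize}
\item the $(k-1)$-shadow of $\cF(\bar{1})$ lies in $\cF(1)$;
\item $\cF(\bar{1})$ is $2$-intersecting, since if $B\cap B'=\{b\}$ then $(B'\setminus\{b\})\cup\{1\}$ misses $B$;
\item $[2,k+1]\in\cF(\bar{1})$, so $\cF(1)$ lies among the $(k-1)$-sets meeting $[2,k+1]$.
\end{itemize}
A correct Step 2 must exploit these facts. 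For instance, you could inject $\cF(\bar{1})\setminus\{[2,k+1]\}$ into the $(k-1)$-subsets of $[2,n]$ that meet $[2,k+1]$ but are disjoint from the corresponding member, hence absent from $\cF(1)$. Alternatively, use a shadow estimate that genuinely uses the $2$-intersecting structure.

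This case cannot be sidestepped. Shifted non-star families with $\tau\ge 3$ exist, e.g.\ $\binom{[2k-1]}{k}$ for $k\ge 3$. The paper's own shortcut for $\tau\ge 3$ (Theorem~\ref{thm:cover-number-bound} with Lemma~\ref{lem:tau-three-less-than}) needs $n\ge k^3$, far from $n\ge 2k+1$.
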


Hilton and Milner also characterized the cases when equality holds. 

In this paper,  we prove versions of the Hilton-Milner theorem for  families of $s$-stable sets. To state the theorems, we  introduce some notation. 
For a set $X \in \binom{[n]}{3}$, let
$$\mathcal{F}_s(X) := \Bigl\{F \in \bstable{n}{3}{s} \Bigm| |F \cap X| \geq 2\Bigr\}.$$
Further, for $x \in [n]$ and $Y \in \bstable{n}{3}{s}$ with $x \not \in Y$, define
$$\mathcal{G}_s(x, Y) := \{ Y  \} \cup \left \{ F \in \bstable{n}{3}{s} \Bigm| x \in F, F \cap Y \neq \emptyset \right \}.$$


Given two families $\cF, \mathcal{G} \subseteq  \binom{[n]}{k}$, we say that $\cF$ is {\em isomorphic} to $\mathcal{G}$, and write $\cF\cong \mathcal{G}$, if there exists a permutation $\pi: [n] \to [n]$ such that $\{x_1, \dots, x_k\} \in \cF$ if and only $\{\pi(x_1), \dots, \pi(x_k)\} \in \mathcal{G}$. 

The next theorem is a ``Hilton-Milner theorem for $s$-stable $3$-sets". Namely, it gives a tight upper bound on the size of a $3$-uniform  non-star intersecting family of $s$-stable sets: 
\begin{theorem}
    \label{thm:stable-HM}
Let  $n \geq 4s+3$, and suppose $\mathcal{F} \subseteq \bstable{n}{3}{s}$ is a non-star intersecting family. Then $$|\mathcal{F}| \leq 3n - 10s + 2.$$
Moreover, equality holds if and only if $\cF\cong \cF_s(\{1,s+1,2s+1\})$ or $\cF\cong \mathcal{G}_s(1, \{s + 1, y, n - s + 1\})$ for some  $y \in [2s + 1, n - 2s + 1]$.    
\end{theorem}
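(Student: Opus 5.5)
Let $\mathcal{F}\subseteq\bstable{n}{3}{s}$ be a non-star intersecting family. The plan is to split according to the \emph{covering number} $\tau(\mathcal{F})$, the least size of a set meeting every member of $\mathcal{F}$. Since $\mathcal{F}$ is intersecting and non-star, $\tau(\mathcal{F})\in\{2,3\}$: any member of $\mathcal{F}$ is a transversal of size $3$.

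First I would record some elementary counts that all later estimates use. For a fixed $x$, the number of sets in $\bstable{n}{3}{s}$ containing $x$ is $\binom{n-2s-1}{2}$, by Talbot's count with $k=3$. For fixed distinct $x,y$ with $d(x,y)\ge s$, the number of stable $3$-sets containing both is $n-2s-1-\#\{z: d(z,x)<s \text{ or } d(z,y)<s\}+\dots$, and this depends on $d(x,y)$. It is at most $n-4s+1$, with equality exactly when $d(x,y)\ge 2s$. When $d(x,y)=s$, as for consecutive elements of $\{1,s+1,2s+1\}$, the count is $n-3s$, because the forbidden neighbourhoods overlap.

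\textbf{Case $\tau=3$.} Fix $A\in\mathcal{F}$. Every $F\in\mathcal{F}$ meets $A$. Partition $\mathcal{F}$ by the trace $F\cap A$. If some element $a\in A$ lies in many members whose trace is exactly $\{a\}$, then since $\tau=3$ some member avoids $a$, and that member must meet each of them. This caps such members at $O(1)$ each, roughly at most $9$ sets through $a$ meeting a fixed $3$-set in prescribed ways. Members meeting $A$ in at least two points number at most $\sum_{\text{pairs}}(\text{pair count})$. When the pairs are at distance $s$ this gives $3(n-3s)$-type terms, and the optimal configuration $A=\{1,s+1,2s+1\}$ yields $|\mathcal{F}_s(A)|=2(n-3s)+(n-4s+1)-2=3n-10s-1$ plus corrections; I would compute $|\mathcal{F}_s(\{1,s+1,2s+1\})|$ exactly and check it equals $3n-10s+2$. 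The main task is to show that members with singleton traces cost more than they gain. Using $n\ge 4s+3$, any family with $\tau=3$ not contained in some $\mathcal{F}_s(X)$ has size $O(s^2)$, which is below $3n-10s+2$. Among families contained in some $\mathcal{F}_s(X)$, the pair counts are maximized precisely when two of the three distances equal $s$, giving uniqueness up to rotation and reflection.

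\textbf{Case $\tau=2$.} Let $\{x,y\}$ be a cover. I would write $\mathcal{F}=\mathcal{F}_x\cup\mathcal{F}_y$, where $\mathcal{F}_x$ contains $x$, $\mathcal{F}_y$ contains $y$ but not $x$, and both are nonempty. The families $\mathcal{F}_x\setminus\{x\}$ and $\mathcal{F}_y$ are cross-intersecting.
\begin{itemize}
\item If $|\mathcal{F}_y|=1$, say $\mathcal{F}_y=\{Y\}$, then $\mathcal{F}\subseteq\mathcal{G}_s(x,Y)$. I would count $|\mathcal{G}_s(x,Y)|$ by inclusion--exclusion over the three elements of $Y$ together with their distances to $x$ and to each other. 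Maximizing this count gives exactly $3n-10s+2$, attained when two elements of $Y$ sit at distance exactly $s$ from $x$ on either side, i.e.\ $Y=\{s+1,y,n-s+1\}$ with $x=1$, so that the forbidden zones overlap maximally.
\item If $|\mathcal{F}_y|\ge 2$ (and symmetrically for $\mathcal{F}_x$), then every member of $\mathcal{F}_x$ must meet two distinct sets of $\mathcal{F}_y$. This forces their pairs outside $x$ into a set of size $O(s)$ of "hitting pairs" unless all of $\mathcal{F}_y$ shares a second common element $z$. In the latter situation the family again embeds into a configuration of type $\mathcal{F}_s(X)$ with $X=\{x,y,z\}$. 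The remaining subcase gives $|\mathcal{F}|\le$ (a constant times) $n$ with a constant strictly less than $3$, or a total below the bound once $n\ge 4s+3$.
\end{itemize}

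I expect the main obstacle to be the exact, rather than asymptotic, bookkeeping near the threshold $n=4s+3$. The counts of stable sets through one or two points depend delicately on cyclic distances, and the cruder estimates in the subcases must still come in below $3n-10s+2$ for the small values of $n$. For those cases I would do explicit distance case analysis: whether each relevant distance is $s$, lies in $(s,2s)$, or is at least $2s$. The equality characterization follows by tracking when each inequality is tight.
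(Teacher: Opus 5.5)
\textbf{The gap is in the case $\tau(\mathcal{F})=3$.} You treat this case as if the extremal configuration could live there, and plan a trade-off between singleton traces and two-point traces. But both extremal families have covering number $2$. Any two points of $X$ cover $\mathcal{F}_s(X)$, since each member misses at most one point of $X$. Likewise $\{x,y_i\}$ covers $\mathcal{G}_s(x,Y)$ for any $y_i\in Y$.

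What $\tau=3$ needs is an absolute constant bound, and your sketch does not supply a usable one:
\begin{itemize}
\item Bounding the members with two-point traces by pair counts gives a bound linear in $n$, roughly $3n$.
\item Your $O(s^2)$ claim for families not inside some $\mathcal{F}_s(X)$ is unproved. It is also insufficient: at $n=4s+3$ the target $3n-10s+2=2s+11$ is only linear in $s$.
\item Your own estimate of roughly $9$ members per element of $A$ gives about $27$. This fails for small $s$: for $s=2$, $n=11$ you need at most $14$ members.
\end{itemize}
The missing ingredient is the sharp Erd\H{o}s--Lov\'asz bound: a $3$-uniform intersecting family with $\tau\ge 3$ has at most $10$ members. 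Since $10<2s+11\le 3n-10s+2$, this disposes of $\tau=3$ in one line and shows it contributes no equality cases. This is exactly how the paper handles it.

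\textbf{Your $\tau=2$ plan follows the paper's route in outline}: a $2$-cover $\{x,y\}$, cross-intersecting links, and reduction to $\mathcal{F}_s(X)$ or $\mathcal{G}_s(x,Y)$. However, the exact bookkeeping you identify as the crux is already wrong in your preliminary counts.
\begin{itemize}
\item For $d(x,y)\ge s$ the correct count is $|\mathcal{F}_{x,y}|=n-2s+1-\min\{2s-1,d(x,y)\}$. It is \emph{largest}, $n-3s+1$, at $d(x,y)=s$, and smallest, $n-4s+2$, once $d(x,y)\ge 2s-1$.
\item Your claim ``at most $n-4s+1$, with equality iff $d(x,y)\ge 2s$'' has the direction reversed, and contradicts your own value $n-3s$ at distance $s$. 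Both of your values are also one too small.
\item This is why you got $3n-10s-1$ for $|\mathcal{F}_s(\{1,s+1,2s+1\})|$. The correct value is $2(n-3s+1)+(n-4s+2)-2=3n-10s+2$.
\item Similarly, the number of stable $3$-sets through a point is $\binom{n-3s+2}{2}$, not $\binom{n-2s-1}{2}$; these agree only for $s=3$.
\end{itemize}
Finally, in your subcase $|\mathcal{F}_y|\ge 2$, a common second element $z$ of $\mathcal{F}_y$ does not force $\mathcal{F}\subseteq\mathcal{F}_s(\{x,y,z\})$. If $\mathcal{F}_y=\{\{y,z,b\},\{y,z,d\}\}$, the set $\{x,b,d\}$ may also lie in $\mathcal{F}$. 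This configuration (the paper's Case 3) must be bounded separately, by $2n-6s+4$. That beats $3n-10s+2$ only because $n>4s+2$, so a generic bound of the form $cn$ with $c<3$ would not suffice at the threshold.
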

With more careful analysis, the bound on $n$ in the theorem can probably be slightly improved.

We also prove a ``non-tight Hilton-Milner theorem for $3$-stable $k$-set". That is, we prove a bound on the size of a $k$-uniform  non-star intersecting family of $3$-stable sets. This bound is likely not tight. 

Theorem \ref{thm:tau-atleast-two} is primarily intended as a tool for proving Theorem \ref{thm:main:k>=4} rather than as an optimal extremal result in its own right. Unlike the case $k=3$, where the structure of intersecting families can be analyzed rather precisely, for larger uniformity we do not currently have a satisfactory description of extremal non-star intersecting families of 3-stable sets. Such a characterization appears substantially more difficult, as one must simultaneously control both the stability condition and the interaction between multiple potential centers. Fortunately, for our application to the chromatic number, an upper bound of the correct order of magnitude is sufficient, and the estimate given in Theorem \ref{thm:tau-atleast-two} is strong enough for this purpose.
\begin{theorem}
    \label{thm:tau-atleast-two}
    Let $k \geq 4$ and $n \geq k^3$. Let $\cF \subseteq \bstable{n}{k}{3}$ be a non-star intersecting family. Then
    $$|\cF| \leq \binom{n-2k-2}{k-2} + \binom{n-2}{k-1} - \binom{n-k-1}{k-1} + 1.$$
\end{theorem}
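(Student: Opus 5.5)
Since $\cF$ is a non-star intersecting family, it has covering number $\tau(\cF)\ge 2$. The plan is to mimic the standard proof of the Hilton--Milner theorem via a maximum-degree element. Let $x\in[n]$ be an element of maximum degree in $\cF$, and split
$$\cF=\cF_x\cup\cF_{\bar x},\qquad \cF_x=\{F\in\cF: x\in F\},\quad \cF_{\bar x}=\{F\in\cF: x\notin F\}.$$
Because $\cF$ is not a star, $\cF_{\bar x}\neq\emptyset$. Every $F\in\cF_x$ must meet every $G\in\cF_{\bar x}$ outside $x$. The target bound has three pieces, and I would read them as follows. The term $\binom{n-2}{k-1}-\binom{n-k-1}{k-1}+1$ is the (unstable) Hilton--Milner count: sets through $x$ meeting a fixed $k$-set $G$, plus $G$ itself. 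The extra term $\binom{n-2k-2}{k-2}$ is a slack term absorbing additional sets of $\cF_{\bar x}$, or those sets of $\cF_x$ that are counted too generously.

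\textbf{Step 1 (bounding $\cF_x$ given one set of $\cF_{\bar x}$).} Fix $G\in\cF_{\bar x}$. Every $F\in\cF_x$ contains $x$ and meets $G$. Ignoring stability except for the fact that $F\setminus\{x\}$ is a $(k-1)$-set avoiding $x$, the number of such $F$ is at most $\binom{n-1}{k-1}-\binom{n-1-k}{k-1}$. To reach $\binom{n-2}{k-1}$ instead, I would use 3-stability at $x$: the neighbours $x\pm1$ (and $x\pm2$) cannot lie in $F$. A cleaner route is to count $(k-1)$-subsets of $[n]\setminus\{x,x+1\}$ meeting $G$ and check that removing one extra forbidden point costs nothing. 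This gives
$$|\cF_x|\le \binom{n-2}{k-1}-\binom{n-k-1}{k-1}$$
(if $G$ contains neighbours of $x$ the bound only gets smaller).

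\textbf{Step 2 (bounding $\cF_{\bar x}$).} It remains to show $|\cF_{\bar x}|\le 1+\binom{n-2k-2}{k-2}$, or more precisely to trade: if $|\cF_{\bar x}|=m\ge 2$, then $|\cF_x|$ drops enough that the total stays below the bound. I would split into two cases.

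\emph{Case A: $\cF_{\bar x}$ is itself a star with some center $y\neq x$.} Then the maximum-degree choice of $x$ gives $|\cF_x|\ge|\cF_y|\ge m$. Sets of $\cF_{\bar x}$ contain $y$ but avoid $x$, and each set of $\cF_x$ not containing $y$ must meet all of them. If $\cF_{\bar x}$ contains two sets $G_1,G_2$ with $G_1\cap G_2=\{y\}$, then sets of $\cF_x$ avoiding $y$ must hit both $G_1\setminus y$ and $G_2\setminus y$. There are only $O(k^2)\binom{n}{k-3}$ of those, while sets through $x,y$ number at most $\binom{n-2k-2}{k-2}$-ish once stability around $x$ and $y$ is imposed (this is the source of the $2k+2$). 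Meanwhile $|\cF_{\bar x}|\le$ the number of stable sets through $y$ meeting every set of $\cF_x\setminus\cF_y$. Since $n\ge k^3$, the $O(k^2 n^{k-3})$ terms are dominated by $\binom{n-2k-2}{k-2}$ minus its Hilton--Milner loss.

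\emph{Case B: $\tau(\cF_{\bar x})\ge2$.} Then every $F\in\cF_x$ must meet each of at least two sets $G_1,G_2\in\cF_{\bar x}$, with no common point hitting all of $\cF_{\bar x}$. This yields $|\cF_x|\le k^2\binom{n}{k-3}$. By maximum degree, $|\cF|\le n\cdot\max\deg$ is hopeless, so instead I would bound $|\cF_{\bar x}|\le$ the Talbot bound $\binom{n-2k-1}{k-1}$ applied at its own max-degree element recursively. This is clumsy. Better: use $\tau(\cF)\le k$ and the standard bound that an intersecting family with $\tau\ge 2$ whose every element has degree at most $D$ satisfies $|\cF|\le kD$. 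With $D=|\cF_x|\le k^2\binom{n}{k-3}$, we get $|\cF|\le k^3\binom{n}{k-3}$, which is smaller than $\binom{n-2}{k-1}-\binom{n-k-1}{k-1}\approx (k-1)\binom{n}{k-2}\cdot\frac{k}{n}\cdots$ when $n\ge k^3$. Verifying this inequality precisely is a computation.

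\textbf{Main obstacle.} I expect Case A to be the main obstacle: getting the exact constant $\binom{n-2k-2}{k-2}$ and the $+1$ requires carefully counting stable $(k-2)$-sets avoiding the neighbourhoods of both $x$ and $y$. I would first try to bound $|\cF_{\bar x}|-1$ by the number of stable sets through $y$ avoiding $x$ and its two neighbours on each side, and absorb everything else using $n\ge k^3$. The binomial inequalities needed throughout are routine but tedious; $n\ge k^3$ is chosen to make them hold.
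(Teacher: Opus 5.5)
There is a genuine gap. Your Step~1 is false, and with it your reading of $\binom{n-2k-2}{k-2}$ as slack for extra members of $\cF_{\bar x}$.

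Take $k=4$, $n=100$, $G=\{20,40,60,80\}$ and $\cF=\{G\}\cup\{F\in\bstable{100}{4}{3} : 1\in F,\ F\cap G\neq\emptyset\}$. This is a non-star intersecting family. Its unique vertex of maximum degree is $x=1$, and $\cF_{\bar x}=\{G\}$. Each $g\in G$ lies in $3831$ stable $4$-sets together with $1$, each pair from $G$ in $85$, and each triple in one. Hence $|\cF_x|=4\cdot 3831-6\cdot 85+4=14818$, whereas $\binom{98}{3}-\binom{95}{3}=13681$. For this construction in general, $|\cF_x|$ is asymptotically $k\binom{n}{k-2}$, while your bound is asymptotically $(k-1)\binom{n}{k-2}$.

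The slip is in the count. The $(k-1)$-subsets of an $(n-2)$-set that meet a fixed $k$-set number $\binom{n-2}{k-1}-\binom{n-k-2}{k-1}$. The expression $\binom{n-2}{k-1}-\binom{n-k-1}{k-1}$ counts those meeting a $(k-1)$-set. Excluding neighbours of $x$, or using stability inside $F\setminus\{x\}$, only removes lower-order terms and cannot lower the leading coefficient from $k$ to $k-1$. So already when $|\cF_{\bar x}|=1$, the family $\cF_x$ consumes part of the $\binom{n-2k-2}{k-2}$ term, and the trade-off in Step~2 has no valid base.

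The missing idea sits in your Case~A, which is the heart of the proof. If $\cF_{\bar x}$ is a star with center $y$, then $\{x,y\}$ covers $\cF$, and the right decomposition is three-way.

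First, the number of members containing both $x$ and $y$ is at most the number of $3$-stable $k$-sets through two fixed points. That is at most $\binom{n-2k-2}{k-2}$, with equality at distance $3$. You noticed this count, but it belongs here, not to $\cF_{\bar x}$.

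Second, consider $\{F\setminus\{x\}: F\in\cF_x,\ y\notin F\}$ and $\{G\setminus\{y\}: G\in\cF_{\bar x}\}$. These are nonempty (the first because $y$ is not a center of $\cF$), cross-intersecting, $(k-1)$-uniform families on the $(n-2)$-set $[n]\setminus\{x,y\}$. The Frankl--Wang bound $|\mathcal{A}|+|\mathcal{B}|\le\binom{m}{b}-\binom{m-a}{b}+1$ (for nonempty cross-intersecting families, $a\le b$, $m\ge a+b$), applied with $m=n-2$ and $a=b=k-1$, gives exactly $\binom{n-2}{k-1}-\binom{n-k-1}{k-1}+1$. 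That is where $n-2$ and $n-k-1$ come from, not from stability at $x$.

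Your Case~A sketch (two members meeting only in $y$, error terms of order $k^2n^{k-3}$) supplies neither this cross-intersecting ingredient nor the exact constant. It also does not cover configurations such as $|\cF_{\bar x}|=1$.

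Your Case~B, by contrast, can be made rigorous, and it is an elementary substitute for the paper's appeal to the Frankl--Kupavskii bound $k^{\tau}\binom{n-\tau}{k-\tau}$. A fixed pair $G_1,G_2$ is not enough, since they may share a point. Instead, for each $z\in G_1$ choose some $G_z\in\cF_{\bar x}$ with $z\notin G_z$. Then every $F\in\cF_x$ contains $x$, some $z\in G_1$ and some $w\in G_z$, so $|\cF_x|\le k^2\binom{n-3}{k-3}$. Every member of $\cF$ meets a fixed member and $x$ has maximum degree, so $|\cF|\le k|\cF_x|\le k^3\binom{n-3}{k-3}$. This is at most $\binom{n-2}{k-1}-\binom{n-k-1}{k-1}$ for $n\ge k^3$, which the paper proves as a lemma.
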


It seems plausible that the upper bound in Theorem \ref{thm:tau-atleast-two} is not optimal. Determining the exact maximum size of a non-star intersecting family of 3-stable $k$-sets remains an interesting open problem. More generally, one may ask for a Hilton–Milner theorem for $s$-stable $k$-uniform families for arbitrary $s$. Besides its intrinsic extremal interest, such a theorem could potentially lead to improvements in the range of $n$ for which Meunier's conjecture is known to hold. 

\subsection{Paper organization}
We start by proving Theorems \ref{thm:stable-HM} and \ref{thm:tau-atleast-two}  in Sections \ref{sec:HM1} and \ref{sec:HM2}, respectively.
Then in Section \ref{sec:tec} we prove a reduction lemma, which allows us to reduce Theorems \ref{thm:main:k>=4}  and \ref{thm:k=3} to proving an upper bound on the size of  non-star intersecting families. In Section \ref{sec:k=3} we then prove Theorems \ref{thm:k=3} and \ref{thm:main:k>=4}, in that order. 
 Finally, in Section \ref{sec:top} we discuss a new topological approach towards Meunier's conjecture. 
We employ this approach to prove the conjecture in the (already proven) case where $k=2$ and $n$ is divisible by $s-1$. This approach will likely not work in general, but we still find it interesting enough to present here.   

A central ingredient of our approach is the reduction lemma proved in Section \ref{sec:tec}. It shows that establishing the conjectured chromatic number reduces to obtaining sufficiently strong upper bounds on non-star intersecting families of stable sets. This separates the coloring problem from the extremal set-theoretic analysis and allows the proofs of Theorems \ref{thm:main:k>=4} and \ref{thm:k=3} to proceed in a unified manner.

\section{Proof of Theorem  \ref{thm:stable-HM}}
\label{sec:HM1}

For $x,y \in [n]$, define 
    $$\mathcal{F}_{x,y}^{k,s} := \left \{F \in \bstable{n}{k}{s} : \{x, y\} \subseteq F \right\}.$$
If $k,s$ are fixed, we often omit them from the notation

In this section we fix $k=3$ and write $ \mathcal{F}_{x,y}=\mathcal{F}_{x,y}^{3,s}$.
We begin by proving a few lemmas.    
\begin{lemma}
\label{lem:two-star}
    Let $s \geq 1$, $n \geq 4s -3$, and suppose $x, y \in [n]$. Then,
    $$|\mathcal{F}_{x,y} | = \begin{cases}
  n - 2s + 1 - \min\{2s - 1, d(x,y)\} & \text{if } d(x,y) \geq s, \\
  0 & \text{if } d(x,y) < s.
\end{cases}$$
  In particular,   $|\mathcal{F}_{x,y} | \le n-3s+1$.
\end{lemma}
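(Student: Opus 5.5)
We count directly: we fix the pair $x,y$ and count the third elements $z$ for which $\{x,y,z\}$ is $s$-stable. No earlier result is needed beyond the definition of $d$.

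If $d(x,y)<s$, then no $s$-stable set contains both $x$ and $y$, so $\mathcal{F}_{x,y}=\emptyset$. From now on we assume $d:=d(x,y)\ge s$; note $d\le n/2$. By the rotational symmetry of $C_n$ we may take $x=1$ and $y=1+d$. The two points cut the cycle into two arcs: one with $d$ steps, whose interior points are $x+t$ for $1\le t\le d-1$, and one with $n-d$ steps. Every admissible $z$ lies in the interior of exactly one of these arcs, so it suffices to count the admissible $z$ on an arc of $L$ steps (with $L\in\{d,n-d\}$).

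For a point $z$ at step $t$ from $x$ along an arc of length $L$, the arc distance to $y$ is $L-t$. The key observation is that the cycle distance agrees with the arc distance whenever the latter is at least $s$. Indeed, $d(z,x)=\min\{t,n-t\}$, and $n-t\ge L-t$, so $d(z,x)\ge s$ holds exactly when $t\ge s$, provided also $L-t\ge s$. The symmetric statement holds for $y$. Hence $z$ is admissible if and only if $s\le t\le L-s$, which gives $\max\{0,L-2s+1\}$ choices.

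Summing over the two arcs gives
\[
|\mathcal{F}_{x,y}|=\max\{0,d-2s+1\}+\max\{0,n-d-2s+1\}.
\]
This is the only point where the hypothesis $n\ge 4s-3$ enters. Since $d\le n/2$, we have $n-d\ge n/2\ge 2s-\tfrac32$, and because $n-d$ is an integer, $n-d\ge 2s-1$. So the second maximum equals $n-d-2s+1$. Therefore
\[
|\mathcal{F}_{x,y}|=n-d-2s+1+\max\{0,d-2s+1\}=n-2s+1-\min\{d,2s-1\},
\]
as claimed.

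The final bound follows because $d\ge s$ and $2s-1\ge s$ give $\min\{d,2s-1\}\ge s$, hence $|\mathcal{F}_{x,y}|\le n-3s+1$. In the case $d<s$ the count is $0$, which trivially satisfies the bound whenever $n\ge 3s-1$, as it does here.

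The only delicate step is the translation from cycle distance to arc distance, together with the integrality argument for the long arc. Everything else is bookkeeping.
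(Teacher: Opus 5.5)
Your proof is correct and follows essentially the same route as the paper: both split the cycle at $x$ and $y$ into arcs of lengths $d(x,y)$ and $n-d(x,y)$ and count arc by arc. In both, the hypothesis $n\ge 4s-3$ is used only to guarantee $n-d(x,y)\ge 2s-1$; in the paper this appears as the disjointness of $[n-s+2,n]$ and $[y+1,y+s-1]$. The difference is that the paper counts the forbidden third elements ($2s$ on the long arc including $x,y$, and $\min\{2s-2,d(x,y)-1\}$ on the short one) and subtracts them from $n$, whereas you count admissible ones directly via $\max\{0,L-2s+1\}$, which also covers $s=1$ without a separate case.
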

\begin{proof}
    If $s = 1$,  the number of $3$-subsets of $[n]$ containing $x$ and $y$ is $n - 2$ and the result holds. Suppose  $s \geq 2$. Clearly, if $d(x,y) < s$, then no $s$-stable set can contain both $x$ and $y$. Therefore, we may assume $d(x,y) \geq s$. By relabeling, we may assume $x = 1$ and $d(x,y) = y - x = y-1$. Note that this assumption implies  $n+1-y \geq y-1$.  
    
    We wish to count the number of elements $z$ such that $\{x, y, z\} \not \in \bstable{n}{3}{s}$. 
    We first count the number of such elements $z$ in the interval $[y,1]$. 
    Let $A=[n - s + 2, n]$,  $B=[y+1, y + s - 1]$, and $C=\{x, y\}$ (see Figure \ref{fig:[n-s+2,n] and [y+1,y+s-1] are disjoint}). Note that no element from $A \cup B \cup C$  forms an $s$-stable set of size 3 with $x$ and $y$.

    \begin{figure}[ht]
      \begin{center}
          
      \begin{tikzpicture}[scale=1.5]

    \def\r{1}
    
    \draw (0,0) circle (\r);
    
    \foreach \i in {1,...,20} {
        \coordinate (P\i) at ({90 - (\i-1)*18}:\r);
    }
    
    \foreach \i in {1,...,20} {
        \fill (P\i) circle (1pt);
    }
    \foreach \i in {16,...,20} {
        \fill[blue] (P\i) circle (1.2pt);
    }
    \foreach \i in {7,...,11} {
        \fill[red] (P\i) circle (1.2pt);
    }
    \fill[black] (P1) circle (1.2pt);
    \fill[black] (P6) circle (1.2pt);
    
    \node[above, yshift=4pt] at (P1) {$x$};
    
    \node[right,xshift = 4pt] at (P6) {$y$};
    \node[right,xshift = 4pt] at (P7) {$y+1$};
    
    \node[below, yshift = -4pt] at (P11) {$y+s-1$};
    
    \node[above, yshift=4pt] at (P20) {$n$};
    
    \node[left, xshift = -4pt] at (P16) {$n-s+2$};
    
    \node[blue,left] at ({145}:1.25) {$A$};
    
    
    \node[red,right] at ({-50}:1.25) {$B$};
    
    \draw[blue, very thick]
    ({90 - (20-1)*18}:1)
    arc[start angle={90 - (20-1)*18},
        end angle={90 - (15)*18},
        radius=1];
    
    \draw[red, very thick]
    ({90 - (11-1)*18}:1)
    arc[start angle={90 - (11-1)*18},
        end angle={90 - (6)*18},
        radius=1];
    \end{tikzpicture}
    \caption{The sets $A$ and $B$  in the proof of Lemma \ref{lem:two-star}.}
    \label{fig:[n-s+2,n] and [y+1,y+s-1] are disjoint}
    \end{center}
\end{figure}
    
We claim that the sets $A,B,C$ are pairwise disjoint. Indeed, it is clear that $C$ is disjoint from $A$ and $B$. If $A$ intersects $B$ then $y+s-1 \geq n-s+2$. Thus 
\begin{equation}
\label{eq:1}
    n \leq y+2s-3.
\end{equation}
 Since by our assumption $n+1-y \geq y-1$, we have $$y \leq \frac{n}{2}+1.$$ Thus together with (\ref{eq:1}) we get $n \leq 4s-4,$ a contradiction. Therefore we have, $|A\cup B \cup C| =2s$, showing that the number of elements $z$ in the interval $[y,1]$ such that $\{x, y, z\} \not \in \bstable{n}{3}{s}$.

  We now wish to count the number of such elements $z$ in the interval $[2,y-1]$. 
  If $d(x,y) \geq 2s - 1$, then there are at least $2s - 2$ elements in that interval and therefore, no element from $[2, s] \cup [y-s+1, y - 1]$  forms an $s$-stable set of size 3 with $x$ and $y$. 
  If $d(x,y) \le 2s - 2$, there are $d(x,y) - 1$ elements that may not form an $s$-stable $3$-set with $x$ and $y$. Putting together, there are $$2s + \min\{2s - 2, d(x,y) - 1\} = 2s - 1 + \min\{2s - 1, d(x,y)\}$$ elements in $[n]$ that do not form an $s$-stable 3-set with $x,y$, and the lemma follows. 
\end{proof}

\begin{lemma}
    \label{lem:min-dist-HM1}
    Let $s \geq 1$, $n \geq 4s + 1$, and suppose $\{x, y, z\} \in \bstable{n}{3}{s}$. Then, $$d(x,y) + d(x,z) + d(y,z) \geq 4s$$ and equality holds if and only if $\{d(x,y), d(x,z), d(y,z)\} = \{s,s,2s\}$.
\end{lemma}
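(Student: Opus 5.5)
Place $x<y<z$ on the cycle $C_n$. The three points split the cycle into three arcs of lengths $a=y-x$, $b=z-y$, $c=n+x-z$, so $a+b+c=n$. Since $d$ is the cyclic distance, $d(x,y)=\min\{a,n-a\}$, $d(y,z)=\min\{b,n-b\}$ and $d(x,z)=\min\{c,n-c\}$. The set $\{x,y,z\}$ is $s$-stable, so each of these three distances is at least $s$. In particular $a,b,c\ge s$, and also $n-a,n-b,n-c\ge s$.

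The plan is to split into cases according to how many arcs exceed $n/2$. At most one arc can exceed $n/2$, because any two arcs sum to at most $n$.

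\textbf{No arc exceeds $n/2$.} Then each distance equals its arc, and the sum is $a+b+c=n\ge 4s+1>4s$. The inequality is strict here, so this case never gives equality.

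\textbf{Exactly one arc exceeds $n/2$.} By relabeling, let it be $c>n/2$. Then $d(x,z)=n-c=a+b$, and the sum is
$$d(x,y)+d(y,z)+d(x,z)=a+b+(a+b)=2(a+b).$$
Since $a,b\ge s$, this is at least $4s$, with equality exactly when $a=b=s$. In that case the distances are $s,s,2s$. Conversely, if $a=b=s$, then $c=n-2s\ge 2s+1>n/2$, which is consistent with this case. So equality holds exactly when the distances are $\{s,s,2s\}$.

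For the converse direction of the equality statement: if the distances are $\{s,s,2s\}$, their sum is $4s$ directly, so there is nothing to prove.

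Formally, I would record these arc lengths before the case split; the only delicate point is the bookkeeping $d(x,z)=n-c$ when $c>n/2$. The hypothesis $n\ge 4s+1$ is used only to make the first case strict.
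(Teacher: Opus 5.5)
Your proof is correct and is essentially the paper's argument: your dichotomy (whether some arc exceeds $n/2$) is the same as the paper's dichotomy of whether the shortest path between the farthest pair passes through the third point. That split gives a sum of $n \ge 4s+1$ in one case and $2(a+b)\ge 4s$ in the other, with equality iff $a=b=s$; your explicit arc-length bookkeeping just makes the paper's path argument more transparent.
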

\begin{proof}
    By relabeling, we may assume  $x \leq y \leq z$ and  $s \leq d(x,y) \leq d(x,z) \leq d(y,z)$. Then, the shortest $(x,y)$-path of $C_n$ does not contain $z$ and the shortest $(x,z)$-path of $C_n$ does not contain $y$. Suppose the shortest $(y,z)$-path contains $x$. Then, $d(y,z) = d(y,x) + d(x,z) \geq 2s$ and therefore, the sum of distances is at least $4s$ with equality if and only if $d(y,x) = d(x,z) = s$ (and therefore, $d(y,z) = 2s$). On the other hand, if the shortest $(y,z)$-path does not contain $x$, then $d(y,z) = n - d(y,x) - d(x,z)$. In this case, we get that the sum of distances is equal to $n$, which we assumed to be at least $4s + 1$.
\end{proof}

In the following two propositions we show that the largest  families of the form $\mathcal{F}_s(X)$  and $\mathcal{G}_s(x,Y)$ are the those where equality holds in the statement of Theorem~\ref{thm:stable-HM}.

\begin{proposition}
    \label{prop:largest-HM1}
    Let $s \geq 1$, $n \geq 4s + 1$, and suppose $\{x,y,z\} \in \binom{[n]}{3}$. Then, we have
    $$|\mathcal{F}_s(\{x,y,z\})| \leq 3n - 10s + 2.$$
    Furthermore, if $s = 1$, equality holds for any choice of $\{x,y,z\}$. If $s \geq 2$, equality holds if and only if $\{x,y,z\}$ is isomorphic to $\{1,s+1,2s+1\}$. 
\end{proposition}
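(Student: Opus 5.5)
Since $\cF_s(\{x,y,z\})$ consists of the $s$-stable $3$-sets meeting $X=\{x,y,z\}$ in at least two points, we will count it by inclusion--exclusion over the three pairs. Write $\cF_{x,y},\cF_{x,z},\cF_{y,z}$ as in Lemma~\ref{lem:two-star}. Any two of these families intersect exactly in $\{X\}$ when $X$ is $s$-stable, and are disjoint otherwise. This is because a $3$-set containing two of the pairs is $X$ itself. Hence
$$|\cF_s(X)| = |\cF_{x,y}|+|\cF_{x,z}|+|\cF_{y,z}| - 2\cdot[X \text{ is } s\text{-stable}].$$

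\textbf{Case 1: $X$ is not $s$-stable.} Some pair, say $d(x,y)<s$, contributes $0$ by Lemma~\ref{lem:two-star}. The other two pairs contribute at most $n-3s+1$ each, so
$$|\cF_s(X)|\le 2n-6s+2 < 3n-10s+2,$$
since $n>4s$. So equality is impossible here for every $s\ge1$. (For $s=1$ every $3$-set is $1$-stable, so this case does not arise.)

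\textbf{Case 2: $X$ is $s$-stable.} All three distances are at least $s$. Lemma~\ref{lem:two-star} gives
$$|\cF_s(X)| = 3(n-2s+1) - \bigl(m_{xy}+m_{xz}+m_{yz}\bigr) - 2,$$
where $m_{uv}=\min\{2s-1,d(u,v)\}$. The task is therefore to show that $M := m_{xy}+m_{xz}+m_{yz} \ge 4s-1$, with equality exactly in the stated configuration. The target $3n-10s+2$ corresponds precisely to $M=4s-1$.

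We split by how many distances reach $2s-1$.
\begin{itemize}
\item If none does, then $M=d(x,y)+d(x,z)+d(y,z)\ge 4s$ by Lemma~\ref{lem:min-dist-HM1}, so the bound is strict.
\item If at least one does, that term equals $2s-1$ and the other two are each at least $s$. Hence $M\ge 4s-1$, with equality iff the other two distances are exactly $s$.
\end{itemize}
In the equality case, two distances equal $s$. Placing $x$ between the other two points, we may relabel so that $X=\{1,s+1,2s+1\}$, whose third distance is $\min\{2s,n-2s\}=2s\ge 2s-1$ since $n\ge4s+1$. Conversely, this $X$ attains $M=s+s+(2s-1)=4s-1$. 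This gives the characterization for $s\ge2$.

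\textbf{The case $s=1$.} Here $m_{uv}=\min\{1,d(u,v)\}=1$ for every pair, so $M=3=4s-1$ and every choice of $X$ attains equality. The case analysis in Case 2 gives the same conclusion, but it is quicker to note this directly. Lemma~\ref{lem:two-star} was stated for $s\ge1$ and $n\ge 4s-3$, so it applies throughout.

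\textbf{Main obstacle.} The one place needing care is the isomorphism claim in the equality case. One must argue that two points at distance $s$ from a common third point lie on opposite sides of it, so that the third distance is really $2s$ and not $0$. Also, "isomorphic" here permits any permutation, but the configuration is in fact obtained by a rotation or reflection of $C_n$, which preserves $d$.
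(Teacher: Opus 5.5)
Your proof is correct and takes essentially the same route as the paper: inclusion--exclusion over the three pair-families using Lemma~\ref{lem:two-star}, the bound $2n-6s+2$ when some pair has distance less than $s$, and a split on how many distances reach $2s-1$, invoking Lemma~\ref{lem:min-dist-HM1} when none does. Your version differs only in small ways: you merge the paper's ``exactly one/two/three'' subcases into one, you treat $s=1$ through the same formula rather than by direct counting, and you spell out why the equality configuration is $\{x-s,x,x+s\}$ together with the converse, which the paper leaves implicit.
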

\begin{proof}
    When $s = 1$,  observe that for any pair of elements in $\{x,y,z\}$, the number of subsets of $[n]$ of size 3 containing this pair is $n-2$. However, the set $\{x,y,z\}$ is counted once for each pair. Therefore,
    $$|\mathcal{F}_1(\{x,y,z\})| = 3(n-2) - 2 = 3n - 8.$$

    Suppose $s \geq 2$. 
    By Lemma \ref{lem:two-star} we have  $|\mathcal{F}_{i,j} | \le n-3s+1$. 
    Moreover, $\mathcal{F}_s(\{x,y,z\}) = \mathcal{F}_{x,y} \cup \mathcal{F}_{x,z} \cup \mathcal{F}_{y,z}$. Therefore, if  some pair of elements in  $\{x,y,z\}$ has distance less than $s$, then
    $$|\mathcal{F}_s(\{x,y,z\})| \leq 2(n-3s+1) = 2n-6s + 2 < 3n-10s+2.$$ The last inequality holds since we assumed $n \geq 4s + 1$.
    
    Therefore, we may assume that all pairwise distances between $x,y,z$ are at least $s$. In this case we have $|\mathcal{F}_s(\{x,y,z\})| = |\mathcal{F}_{x,y}| + |\mathcal{F}_{x,z}| + |\mathcal{F}_{y,z}| - 2$, and therefore, by Lemma~\ref{lem:two-star}, 
    \begin{align*}
        |\mathcal{F}_s(\{x,y,z\})| =   3n& - 6s + 1 \\
        &- (\min\{2s - 1, d(x,y)\} + \min\{2s - 1, d(x,z)\} + \min\{2s - 1, d(y,z)\}).
    \end{align*}

Write $D:=\min\{2s - 1, d(x,y)\} + \min\{2s - 1, d(x,z)\} + \min\{2s - 1, d(y,z)\}$. To prove the proposition, we show that $D \ge 4s-1$,  
 with equality if and only if $\{d(x,y), d(x,z), d(y,z)\} = \{s,s,2s\}$. 
 Indeed, if all the pairwise distances are less than $2s-1$, then by Lemma~\ref{lem:min-dist-HM1} we have $D \ge 4s> 4s-1. $ 
 If all the distances are at least $2s-1$, then 
 $D = 3(2s - 1) = 6s - 3 > 4s-1$. 
 If exactly two of the distances are at least $2s-1$,
then $D \ge s + 2(2s-1) = 5s - 2 > 4s-1$. 
 Finally, if exactly one of the distances is at least $2s-1$, then     $D \ge s+s + 2s-1 \geq 4s-1$, and equality holds if and only if two of the distances are exactly $s$, and therefore the third is $2s$.  
\end{proof}

\begin{proposition}
    \label{prop:largest-HM2}
    Let $s \geq 1$, $n \geq 4s + 1$,  $x \in [n]$, and $Y \in \bstable{n}{3}{s}$ with $x \not \in Y$. Then, 
    $$|\mathcal{G}_s(x,Y)| \leq 3n - 10s + 2.$$
    Furthermore, if $s = 1$, equality holds for any choice of $x \in [n]$, and $Y \in \bstable{n}{3}{s}$ with $x \not \in Y$. If $s \geq 2$, equality holds if and only if there exists $y \in [2s+1, n - 2s + 1]$ such that $\mathcal{G}_s(x,Y)$ is isomorphic to $\mathcal{G}_s(1, \{s+1, y, n - s + 1\})$. 
\end{proposition}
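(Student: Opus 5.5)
Write $Y=\{a,b,c\}$. Since $x\notin Y$, every $F\in\mathcal{G}_s(x,Y)\setminus\{Y\}$ contains $x$, so
$$\mathcal{G}_s(x,Y)=\{Y\}\cup\mathcal{F}_{x,a}\cup\mathcal{F}_{x,b}\cup\mathcal{F}_{x,c}.$$
A set $F=\{x,u,v\}$ lies in two of these families exactly when $\{u,v\}\subseteq Y$. Such an $F$ must be $s$-stable, so $Y$ has at most three "double" sets $\{x,a,b\},\{x,a,c\},\{x,b,c\}$, and no set lies in all three families. By inclusion–exclusion,
$$|\mathcal{G}_s(x,Y)| = 1+|\mathcal{F}_{x,a}|+|\mathcal{F}_{x,b}|+|\mathcal{F}_{x,c}| - m,$$
where $m\in\{0,1,2,3\}$ is the number of pairs $\{u,v\}\subseteq Y$ with $\{x,u,v\}$ $s$-stable. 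For $s=1$ every term is computed directly: $|\mathcal{F}_{x,\cdot}|=n-2$ and $m=3$, giving $1+3(n-2)-3=3n-8=3n-10s+2$ for any choice of $x,Y$.

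For $s\ge2$, we have $n\ge 4s+1\ge 4s-3$, so Lemma~\ref{lem:two-star} applies and gives
$$|\mathcal{G}_s(x,Y)| = 1+3(n-2s+1)-D'-m,\qquad D'=\sum_{y\in Y}\min\{2s-1,d(x,y)\},$$
where a term with $d(x,y)<s$ is replaced by the full $n-2s+1$ loss, i.e.\ that family is empty. The target is therefore
$$D'+m\ \ge\ 4s+2,$$
with a characterization of equality. The case analysis runs as follows.

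\textbf{Some $d(x,y)<s$.} Then $\mathcal{F}_{x,y}=\emptyset$, and $m\le 1$ because any double set containing $y$ is non-stable. We get $|\mathcal{G}|\le 1+2(n-3s+1)=2n-6s+3$, which is less than $3n-10s+2$ since $n\ge 4s+1$. So we may assume all of $d(x,a),d(x,b),d(x,c)\ge s$, which means $Y\cup\{x\}$ is an $s$-stable $4$-set.

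\textbf{All distances $\ge s$.} Since $Y$ is itself $s$-stable, every pair in $Y$ is at distance $\ge s$, so all three double sets are $s$-stable and $m=3$. The goal becomes $D'\ge 4s-1$. Order $x,a,b,c$ cyclically with gaps $g_1=d(x,a)$, $g_2,g_3$, and $g_4=d(c,x)$ along the cycle, each gap $\ge s$. Then $\min\{2s-1,d(x,a)\}\ge s$ and $\min\{2s-1,d(x,c)\}\ge s$, while $d(x,b)\ge\min\{g_1+g_2,\,g_3+g_4\}\ge 2s$, so its term equals $2s-1$. Hence $D'\ge s+s+(2s-1)=4s-1$.

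Equality forces $d(x,a)=d(x,c)=s$ for the two cyclic neighbours of $x$ in $Y$. The middle element $b$ only needs $d(x,b)\ge 2s-1$, which is automatic, together with stability of $Y$. Relabelling $x=1$ gives $a=s+1$, $c=n-s+1$, and $b=y$ with $d(s+1,y)\ge s$ and $d(y,n-s+1)\ge s$, i.e.\ $y\in[2s+1,\,n-2s+1]$. Conversely, every such configuration attains equality, which gives the "if and only if".

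The main obstacle is bookkeeping rather than ideas. One must check that the inclusion–exclusion overlap count $m$ is exactly 3 in the stable case, and that nothing is double-counted with $Y$ itself; this holds because $x\notin Y$. One must also justify $d(x,b)\ge 2s$ from the cyclic order, since a gap could wrap around. I would handle the latter by explicitly using the cyclic ordering of the four points, as above.
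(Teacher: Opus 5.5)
Your approach is the paper's: you decompose $\mathcal{G}_s(x,Y)=\{Y\}\cup\mathcal{F}_{x,a}\cup\mathcal{F}_{x,b}\cup\mathcal{F}_{x,c}$, apply inclusion--exclusion with Lemma~\ref{lem:two-star}, and observe that the cyclically middle element of $Y$ is at distance at least $2s$ from $x$. The case where all three distances are at least $s$ is handled correctly, including both directions of the equality characterization. There is, however, a boundary error in the case where some $d(x,y)<s$. You bound $|\mathcal{G}_s(x,Y)|\le 1+2(n-3s+1)=2n-6s+3$ and claim this is less than $3n-10s+2$ because $n\ge 4s+1$. But $(3n-10s+2)-(2n-6s+3)=n-4s-1$, which is $0$ when $n=4s+1$. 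So at $n=4s+1$ you only get a non-strict inequality. The upper bound survives, but the ``only if'' direction of the equality characterization does not: nothing in your argument rules out a configuration with some $d(x,y)<s$ attaining $3n-10s+2$.

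The problem is that you discarded the overlap term $m$ (using $m\ge 0$). Your remark that $m\le 1$ points the wrong way, since for an upper bound on $|\mathcal{G}_s(x,Y)|$ you need a lower bound on $m$. Split the case as the paper does. If exactly one distance, say $d(x,a)$, is less than $s$, then $d(x,b),d(x,c)\ge s$, and $d(b,c)\ge s$ by stability of $Y$. So $\{x,b,c\}$ is $s$-stable and lies in both $\mathcal{F}_{x,b}$ and $\mathcal{F}_{x,c}$, giving $m=1$ and $|\mathcal{G}_s(x,Y)|\le 2n-6s+2<3n-10s+2$ for $n\ge 4s+1$. If two or more distances are less than $s$, at most one family $\mathcal{F}_{x,\cdot}$ is non-empty, so $|\mathcal{G}_s(x,Y)|\le n-3s+2$, which is strictly below the bound. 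With this correction your proof is complete.
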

\begin{proof}
    Suppose first that $s = 1$. The number of $3$-subsets of $[n]$ that contain $x$ but do not contain any element of $Y$ is $\binom{n-4}{2}$. Therefore, the number of $3$-subsets of $[n]$ that contain $x$ and contain at least one element of $Y$ is $\binom{n - 1}{2} - \binom{n-4}{2}$. Together with $Y$, this gives
    $$|\mathcal{G}_1(x,Y)| = \binom{n - 1}{2} - \binom{n-4}{2} + 1 = 3n - 8,$$ As needed.
    
    Next, suppose $s \geq 2$. Write $Y=\{y_1,y_2,y_3\}$. Observe that since $Y$ is a $s$-stable, we must have $d(x,y_i)\ge s$ for some $i\in [3]$. Without loss of generality, $d(x,y_3)\ge s$.

 If $d(x,y_i)<s$ for both $i=1$ and $i=2$, then $\mathcal{G}_s(x,Y) =  \cF_{x,y_3} \cup \{Y\}$, and thus by Lemma \ref{lem:two-star} we have $$|\mathcal{G}_s(x,Y)| \leq n-3s+1+1 < 3n-10s+2.$$
    
    If $d(x,y_i)<s$ for exactly one of $i\in\{1,2\}$, say $i=1$, then $\mathcal{G}_s(x,Y) = \cF_{x,y_2} \cup \cF_{x,y_3} \cup \{Y\}$, and by Lemma \ref{lem:two-star} we have 
    $$|\mathcal{G}_s(x,Y)| \leq 2(n-3s+1)-1+1 = 2n-6s+2 < 3n-10s+2.$$
We subtracted $1$ since the set $\{x,y_2,y_3\}$ belongs to both  $\cF_{x,y_2}$ and  $\cF_{x,y_3}$.

    Therefore, we may assume $d(x,y_i)\ge s$ for all $i\in [3]$. Then 
     $\mathcal{G}_s(x,Y) = \mathcal{F}_{x,y_1} \cup \mathcal{F}_{x,y_2} \cup \mathcal{F}_{x,y_3} \cup \{Y\}$, and since the sets $\{x,y_1,y_2\}, \{x,y_1,y_3\}, \{x,y_2,y_3\}$ are counted twice in this union, we get by Lemma \ref{lem:two-star},
    \begin{align*}
    |\mathcal{G}_s(x,Y)| &= |\mathcal{F}_{x,y_1}| + |\mathcal{F}_{x,y_2}| + |\mathcal{F}_{x,y_3}| - 2\\
        &= 3n - 6s + 1 - (\min\{2s - 1, d(x,y_1)\} + \min\{2s - 1, d(x,y_2)\} + \min\{2s - 1, d(x,y_3)\}).
    \end{align*}

Write $D=\min\{2s - 1, d(x,y_1)\} + \min\{2s - 1, d(x,y_2)\} + \min\{2s - 1, d(x,y_3)\}.$
We show that  $D\ge 4s - 1$ with equality if and only if there exists $y_i, y_j \in Y$ such that $d(x,y_i) = d(x,y_j) = s$.

By relabeling, we may assume $x \leq y_1 \leq y_2 \leq y_3$. Observe that since all distances are at least $s$, we must have  $d(x,y_2) \ge 2s$.  
If all  distances $d(x,y_i)$  are at least $2s-1$, then $$D= 3(2s - 1) = 6s - 3 > 4s-1.$$ 
If exactly two of the distances are at least $2s-1$, then  $$D\ge s + 2(2s-1) = 5s - 2 > 4s-1.$$ Finally, if both $d(x,y_1)<2s-1$ and $d(x,y_3)<2s-1$, then $D=d(x,y_1) + d(x,y_3) + 2s-1 \geq 4s-1$ and equality holds if and only if $d(x,y_1) = d(x,y_3) = s$. In this case, we have that
$\mathcal{G}_s(x, Y)$ is isomorphic to $\mathcal{G}_s(1, \{s+1, y, n-s+1\})$ for some $y \in [2s + 1, n - 2s + 1]$.
\end{proof}

The {\em matching number} $\nu(\cF)$ of a family of sets $\cF$  is the largest size of a subfamily of pairwise disjoint sets in  $\cF$. 
The {\em covering number} $\tau(\cF)$ is the smallest size of a {\em cover}, namely, a set of vertices $C$ that intersects every set in $\cF$. Note that if $\cF$ is a non-star, then $\tau(\cF) \ge 2$.

\begin{lemma}
    \label{lem:non-star-graphs}
    Let $A, B$ be non-empty, cross-intersecting families of sets of size 2, such that neither $A$ nor $B$ is a star. Then, $|A| + |B| \leq 6$.
\end{lemma}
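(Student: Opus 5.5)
The plan is to use the structure of non-star graphs together with the fact that two edges are disjoint if and only if they cross-intersect nowhere. Think of $A$ and $B$ as edge sets of graphs. A single edge is a star, so each of $A,B$ has at least two edges. A family of $2$-sets with no two disjoint edges whose edges have no common vertex must be a triangle. Hence each of $A$ and $B$ either contains two disjoint edges or is exactly a triangle. I split into two cases according to this dichotomy.

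\emph{Case 1: one family, say $A$, contains two disjoint edges $ab$ and $cd$.} Every edge of $B$ must meet both $ab$ and $cd$, so it has one endpoint in $\{a,b\}$ and one in $\{c,d\}$. Thus $B \subseteq \{ac,ad,bc,bd\}$. A non-star subfamily of this $4$-cycle must contain one of its perfect matchings, $\{ac,bd\}$ or $\{ad,bc\}$. Applying the same argument with the roles of $A$ and $B$ swapped, every edge of $A$ has one endpoint in each edge of that matching. So $A$ also lives inside $K_4$ on $\{a,b,c,d\}$, and in particular $A\cup B \subseteq \binom{\{a,b,c,d\}}{2}$.

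Next, partition the six edges of $K_4$ into the three perfect matchings $M_i=\{e_i,\bar e_i\}$. Two edges of $K_4$ are disjoint exactly when they are complementary, i.e.\ form some $M_i$. So cross-intersection says precisely that no $M_i$ has one edge in $A$ and the complementary edge in $B$. Fix $i$ and count $|A\cap M_i|+|B\cap M_i|$. If $A\cap M_i=\{e_i,\bar e_i\}$, then $B\cap M_i=\emptyset$. If $A\cap M_i=\{e_i\}$, then $B\cap M_i\subseteq\{e_i\}$. The remaining subcases are symmetric. In every case the count is at most $2$, and summing over $i=1,2,3$ gives $|A|+|B|\le 6$.

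\emph{Case 2: neither family contains two disjoint edges,} so both are triangles, on vertex sets $T_A$ and $T_B$.
\begin{itemize}
\item If $|T_A\cap T_B|=0$, the triangles are vertex-disjoint and no edge of $A$ meets an edge of $B$.
\item If $|T_A\cap T_B|=1$, say the common vertex is $v$, then the edge of $A$ opposite $v$ and the edge of $B$ opposite $v$ are disjoint.
\item If $|T_A\cap T_B|=2$, say $T_A=\{a,b,c\}$ and $T_B=\{a,b,d\}$, then $ac\in A$ and $bd\in B$ are disjoint.
\end{itemize}
Each of these contradicts cross-intersection, so $T_A=T_B$ and $|A|+|B|=6$.

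The only step needing care is in Case 1: showing that both families are confined to the same $K_4$ before applying the matching-pair counting. This step uses the non-star hypothesis on $B$ to produce a perfect matching of the $4$-cycle, which then pins down the edges of $A$.
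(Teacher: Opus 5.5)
Your proof is correct and follows essentially the same route as the paper. You both split into the triangle case and the case with two disjoint edges, and in the latter you confine both families to the four vertices of a matching of $A$ using the bipartite structure that cross-intersection forces. The differences are minor: you finish by summing over the three perfect matchings of $K_4$, whereas the paper notes that each family has at most three edges unless one is $K_{2,2}$, in which case the other has at most two; and you spell out the triangle--triangle case, which the paper asserts in one line.
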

\begin{proof}
    Observe that since the families are 2-uniform, cross-intersecting, and non-empty, we have $\nu(A) \leq 2$ and $\nu(B) \leq 2$. If $\nu(A) = 1$, then $A$ is a non-star intersecting graph, so it must be a triangle, and $|A|=3$.  Since $B$ is not a star and $A, B$ are cross-intersecting, we must have that $A = B$ and therefore, $|A| + |B| = 6$. A symmetric argument holds when $\nu(B) = 1$. Therefore, we may assume $\nu(A) = \nu(B) = 2$. 
    
    Let $M = \{e_1=\{v_1, v_2\}, e_2=\{v_3, v_4\}\}$ be a maximum matching of $A$. Then, since $A, B$ are cross-intersecting, 
    we have that $B$ is a bipartite graph with sides $e_1$ and $e_2$. 
    Because $\nu(B) = 2$, after relabeling, we may assume $\{e_3=\{v_1, v_3\}, e_4=\{v_2, v_4\}\} \subseteq B$. Thus $A$ is a  bipartite graph with sides $e_3$ and $e_4$. 
    If each of $A$ and $B$ has at most 3 edges then we are done. Otherwise, one of $A$ or $B$ is the complete bipartite graph $K_{2,2}$. But then the other family cannot have more than two edges without violating the cross-intersection property. So again we have, 
 $|A| + |B| = 6$.
\end{proof}

We  will also use the following result by Erd\H{o}s and Lov\'asz~\cite{erdos-lovasz-75}:

\begin{theorem}[Erd\H{o}s and Lov\'asz \cite{erdos-lovasz-75}]
    \label{thm:cover>2}
    Let $\mathcal{F} \subseteq \binom{[n]}{3}$ be an intersecting family with $\tau(\mathcal{F}) \geq 3$. Then, $|\mathcal{F}| \leq 10$.
\end{theorem}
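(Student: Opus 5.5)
The plan is a direct counting argument built on bounded pair degrees and a well-chosen base set, with the case analysis organized by the maximum pair degree. For a set $P$ write $\deg(P)=|\{F\in\cF : P\subseteq F\}|$. The first step is the basic observation that $\tau(\cF)\ge 3$ bounds pair degrees. For any pair $\{x,y\}$, the pair is not a cover, so some $B\in\cF$ avoids both $x$ and $y$. Every set $\{x,y,z\}\in\cF$ must meet $B$, which forces $z\in B$. Hence $\deg(\{x,y\})\le 3$. In the same way, for a single element $x$ there is a set avoiding $x$, since $\{x\}$ is not a cover.

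Next I fix a base set $A=\{a_1,a_2,a_3\}\in\cF$ and split $\cF$ according to $|F\cap A|\in\{1,2,3\}$.

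Sets with $|F\cap A|\ge 2$ contain one of the three pairs of $A$. By the pair-degree bound, and since $A$ itself lies in all three pair stars, there are at most $1+3\cdot 2=7$ of them.

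The delicate part is the sets meeting $A$ in exactly one point, $F=\{a_i,u,v\}$ with $u,v\notin A$. For these I would use the covers $\{a_i,w\}$: since $\{a_i,w\}$ is not a cover, for each such pair there is a set avoiding it. I would then exploit that the link graphs
$$L_i=\bigl\{\{u,v\} : \{a_i,u,v\}\in\cF,\ u,v\notin A\bigr\}$$
are pairwise cross-intersecting, because two sets through different $a_i$ must meet outside $A$. Two facts about the $L_i$ drive the count. First, if all three $L_i$ are non-empty and none is a star, Lemma~\ref{lem:non-star-graphs} bounds the sizes of any two of them. Second, if some $L_i$ is a star with center $w$, then cross-intersection forces every other non-empty $L_j$ to be small: each edge of $L_j$ must meet all edges of the star, so $L_j$ consists of edges through $w$ plus at most one extra edge when the star has only two edges. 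I would also use that $\tau\ge 3$ means no $L_i$ together with the pair-through-$A$ sets can be covered by two points.

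The step I expect to be the main obstacle is balancing the two parts. When many sets meet $A$ in two points, the pair degrees inside $A$ are close to $3$, which pins down the sets avoiding those pairs and hence restricts the $L_i$. Conversely, large $L_i$ forbid high pair degrees inside $A$. To handle this I would choose $A$ extremally, namely a set maximizing $\sum_{i<j}\deg(\{a_i,a_j\})$. I would then do case analysis on the maximum pair degree $\in\{1,2,3\}$ and aim to show in each case that the total is at most $10$.

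In the case where every pair has degree $1$, the family is a linear intersecting $3$-graph, and a simple count bounds it by $7$ (the Fano plane). The other cases should each reduce to the structure of the extremal $10$-set configuration, with the cross-intersection lemma closing the gaps.
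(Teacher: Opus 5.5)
The paper does not prove this statement; it quotes it from Erd\H{o}s and Lov\'asz. So your argument has to stand on its own, and as written it is a plan rather than a proof. The step you yourself call the main obstacle is left as ``aim to show'' and ``should reduce to'', and that step is the entire content. Your two separate estimates cannot simply be added. At most $7$ sets meet $A$ in at least two points, but even when all three links are non-stars, Lemma~\ref{lem:non-star-graphs} only gives $\sum_i|L_i|\le 9$. Without an inequality coupling the two parts you are stuck at $16$.

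Your claim that a star $L_i$ with center $w$ makes the other links small is also false from cross-intersection alone. $L_j$ may contain arbitrarily many edges through $w$, and if $L_i$ is a single edge $\{w,u\}$, arbitrarily many through $u$ as well; only $\tau(\cF)\ge 3$ excludes this. The extremal choice of $A$ and the split by maximum pair degree are never tied to any inequality. Nor is there a single extremal configuration to reduce to: $\binom{[5]}{3}$ is one $10$-set example, but there are others, e.g.\ one on $7$ points.

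Your ingredients (pair degrees at most $3$, the bound $7$, cross-intersecting links, Lemma~\ref{lem:non-star-graphs}) do suffice once you add the missing coupling. For $\{i,j,k\}=\{1,2,3\}$ let $M_i=\{z\notin A:\{a_j,a_k,z\}\in\cF\}$, so that $|\cF|=1+\sum_i(|M_i|+|L_i|)$. Each set $\{a_j,a_k,z\}$ must meet every $\{a_i,u,v\}$, so $M_i\subseteq\bigcap L_i$. Hence $M_i=\emptyset$ when $L_i$ is not a star, and $|M_i|+|L_i|\le 3$ when $L_i$ is a star with at most two edges.

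Then use $\tau\ge 3$ in three specific ways.
\begin{enumerate}
\item $\{a_j,a_k\}$ is not a cover, so every $L_i\neq\emptyset$.
\item Suppose $L_i$ were a star with at least three edges and center $w$. Then every edge of $L_j$ and $L_k$ contains $w$ and $M_i\subseteq\{w\}$, so $\{a_i,w\}$ would be a cover. Hence every star link contributes at most $3$.
\item Suppose $L_i$ is the only non-star link. Since $M_i=\emptyset$, the union $L_j\cup L_k$ must have no common vertex. If $L_i$ had two disjoint edges $e,f$, all edges of $L_j\cup L_k$ would lie in the $4$-cycle between $e$ and $f$. Being triangle-free and not a star, $L_j\cup L_k$ would then contain two disjoint edges, necessarily one in each of the stars $L_j,L_k$, contradicting cross-intersection. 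So $L_i$ is a triangle.
\end{enumerate}

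Now count by the number of non-star links:
\begin{itemize}
\item none or one: at most $1+3+3+3$;
\item two: at most $1+6+3$, by Lemma~\ref{lem:non-star-graphs};
\item three: at most $1+9$, by summing the lemma over the three pairs.
\end{itemize}
So $|\cF|\le 10$ for every choice of $A\in\cF$, with no need for an extremal choice of $A$ or a pair-degree case split.
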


We are now ready to prove Theorem~\ref{thm:stable-HM}.

\begin{proof}[Proof of Theorem~\ref{thm:stable-HM}]
Let
$\mathcal{F} \subseteq \bstable{n}{3}{s}$ be a non-star intersecting family.
    By Theorem~\ref{thm:cover>2}, if $\tau(\cF)\ge 3$ then  $|\cF|\le 10$. 
    Note that $10 < 3n - 10s + 2$ whenever $n \geq 4s + 3$. Therefore, we may assume that $\tau(\mathcal{F}) = 2$.

    Let $\{x, y\}$ be a cover of $\mathcal{F}$. Then $\cF$ can be written as a disjoint union $\mathcal{F} = \mathcal{H}_{x,y} \sqcup\mathcal{H}_x \sqcup \mathcal{H}_y,$
    where $\mathcal{H}_{x,y} = \{S\in \cF : \{x,y\}\subset S\}$, $\mathcal{H}_{x} = \{S\in \cF :  x\in S, y\notin S\}$, and $\mathcal{H}_{y} = \{S\in \cF :  y\in S, x\notin S\}$. Note that  $\tau(\mathcal{F}) = 2$ implies $\mathcal{H}_x \neq \emptyset$ and $\mathcal{H}_y \neq \emptyset$.
    Define the following families:
    $$\mathcal{H}_x(x) = \{S - \{x\} : S \in \mathcal{H}_x\}, \quad \mathcal{H}_y(y) = \{S - \{y\} : S \in \mathcal{H}_y\}.$$
Since $\mathcal{F}$ is $3$-uniform and intersecting, $\mathcal{H}_x(x)$ and $\mathcal{H}_y(y)$ are cross-intersecting  graphs. Split into cases. 
    
    \textbf{Case 1.} Neither $\mathcal{H}_x(x)$ nor $\mathcal{H}_y(y)$ is a star. In this case, by Lemma~\ref{lem:non-star-graphs}, $|\mathcal{H}_x(x)| + |\mathcal{H}_y(y)| \leq 6$. Now, if $d(x,y) < s$, $\mathcal{H}_{x,y} = \emptyset$. Otherwise, by Lemma~\ref{lem:two-star}, we have $|\mathcal{H}_{x,y}| \le n - 3s + 1$. This gives 
$$|\mathcal{F}| = |\mathcal{H}_{x,y}| + |\mathcal{H}_x(x)| + |\mathcal{H}_y(y)| \le  n - 3s + 7 < 3n - 10s + 2.$$
    The last inequality holds for all $s \geq 1$ and $n \geq 4s+3$.
    
    \textbf{Case 2.} Both $\mathcal{H}_x(x)$ and  $\mathcal{H}_y(y)$ are stars. 
    
    Assume first that the stars $\mathcal{H}_x(x)$ and  $\mathcal{H}_y(y)$ have different centers, say $\mathcal{H}_x(x)$ has center $z$, $\mathcal{H}_y(y)$ has center $w$, such that $z$ is not a star for $\mathcal{H}_y(y)$ and $w$ is not a star for $\mathcal{H}_x(x)$. Then, every set in $\mathcal{H}_x$ is of the form $\{x, z, u\}$ such that $u \notin \{x,y,z\}$, and every set in $\mathcal{H}_y$ is of the form $\{y, w, v\}$ with  $v \notin \{x,y,w\}$. 
    
    We claim that either $|\mathcal{H}_x(x)| + |\mathcal{H}_y(y)| \leq 4$ or at most one family has size greater than $1$. Indeed, suppose $|\mathcal{H}_x(x)| + |\mathcal{H}_y(y)| > 4$ and both families have size greater than $1$. Without loss of generality, we may assume $|\mathcal{H}_x(x)| \geq 3$. Let $\{x,z,u_1\}, \{x,z,u_2\}, \{x,z,u_3\} \in \mathcal{H}_x$ and $\{y,w,v_1\}, \{y,w,v_2\} \in \mathcal{H}_y$. Then, to maintain the cross-intersection property between $\{x, z, u_1\}$ and the two sets from $\mathcal{H}_y$, one would need $u_1 = w$ or $\{u_1, z\} = \{v_1, v_2\}$. In the first case, if we next consider the need for the cross-intersection property between $\{x,z,u_2\}$ and the two sets from $\mathcal{H}_y$, we would conclude that $\{u_2,z\} = \{v_1,v_2\}$. But then, $\{x,z,u_3\}$ is disjoint from one of the two sets from $\mathcal{H}_y$, a contradiction. In the second case,  both $\{x,z,u_2\}$ and $\{x,z,u_3\}$ are disjoint from one of the two sets from $\mathcal{H}_y$, again a contradiction.

    Now, if $|\mathcal{H}_x(x)| + |\mathcal{H}_y(y)| \leq 4$, we get 
    $$|\mathcal{F}| = |\mathcal{H}_{x,y}| + |\mathcal{H}_x(x)| + |\mathcal{H}_y(y)| \le  n - 3s + 5 < 3n - 10s + 2.$$
     On the other hand, if $|\mathcal{H}_x(x)| + |\mathcal{H}_y(y)| > 4$, then one of the families is of size 1. Without loss of generality, suppose $\mathcal{H}_x = \{\{x,z,u\}\}$. Then, to maintain the cross intersection property, we must have $u = w$ and thus $\mathcal{H}_x(x) = \{\{z, w\}\}$. But this means that $w$ is  a center  for $\mathcal{H}_x(x)$, a contradiction to our assumption.

    If $\mathcal{H}_x(x)$ and  $\mathcal{H}_y(y)$ have a mutual center, say $w$, then every element of $\mathcal{F}$ contains at least two elements from $\{x,y,w\}$. Therefore, $\mathcal{F} \subseteq \mathcal{F}_s(\{x,y,w\})$, and thus by Proposition \ref{prop:largest-HM1} we are done. 

    \textbf{Case 3.} One family is a star and the other family is not a star. Without loss of generality, suppose $\mathcal{H}_x(x)$ is a star with center $z$ and $\mathcal{H}_y(y)$ is not a star. Write $$\mathcal{H}_{y}(y)= \mathcal{H}_{y,z}(y) \cup \mathcal{H}_{y, \overline{z}}(y),$$ where $\mathcal{H}_{y,z}(y)$ is the collection of all sets in  $\mathcal{H}_y(y)$  containing $z$ and $\mathcal{H}_{y, \overline{z}}(y)$ is the collection of all sets in $\mathcal{H}_y(y)$ not containing $z$. Since $\mathcal{H}_y(y)$ is not a star, $\mathcal{H}_{y, \overline{z}}(y) \neq \emptyset$. If $|\mathcal{H}_x(x)| \geq 3$, then no set of $\mathcal{H}_{y, \overline{z}}(y)$ can intersect every element of $\mathcal{H}_x(x)$. Thus $|\mathcal{H}_x(x)| \le  2$. 
    
    Assume first that $|\mathcal{H}_x(x)| = 2$, say $\mathcal{H}_x(x) = \{\{z, u_1\}, \{z, u_2\}\}$. Then we must have $\mathcal{H}_{y, \overline{z}}(y) = \{\{u_1, u_2\}\}$ and $|\mathcal{H}_{y, \overline{z}}(y)|=1$. 
    If both $d(x,y) \geq s$ and $d(y,z) \geq s$, then the pairwise distance between any two elements in $\{x,y,z,u_1,u_2\}$ is at least $s$, which implies that $n \geq 5s.$
     By Lemma \ref{lem:two-star} we obtain
    \begin{align*}
        |\mathcal{F}| &= |\mathcal{H}_{x,y}| + |\mathcal{H}_{y,z} (y)|+|\mathcal{H}_{y,\bar z}(y)|+|\mathcal{H}_x(x)| \\
        &\le |\mathcal{H}_{x,y} \cup \mathcal{H}_{y,z}|+|\mathcal{H}_{y,\bar z}(y)|+|\mathcal{H}_x(x)|\\ 
        &\leq 2(n-3s+1) - 1 + 1 +2 = 2n-6s+4 < 3n-10s+2.
    \end{align*}
    We subtract $1$  since $\{x,y,z\}$ was counted twice.
    On the other hand, if one of $d(x,y), d(y,z)$ is less than $s$, 
    then for all $n \geq 4s+3$ we have
    $$|\mathcal{F}| \leq (n-3s+1)+3 = n-3s+4 < 3n-10s+2.$$
     Finally, assume $|\mathcal{H}_x(x)| = 1$, say $\mathcal{H}_x(x) = \{\{z, u\}\}$. Then, every element of $\mathcal{H}_y$ must contain either $z$ or $u$. In this case, $\mathcal{F} \subseteq \mathcal{G}_s(y, \{x, z, u\})$. 

    We have shown that for $n \geq 4s + 3$, $|\mathcal{F}| \leq 3n - 10s + 2$. Moreover, if $\mathcal{F}$ is not isomorphic to  $\mathcal{F}_s(X)$ or  $\mathcal{G}_s(y,X)$ for some $X$ and $y$, then $|\mathcal{F}| < 3n - 10s + 2$. This completes the proof of the theorem. 
\end{proof}

\section{Proof of Theorem  \ref{thm:tau-atleast-two}}\label{sec:HM2}

 In the proof, we will use the following two theorems by Frankl and Kupavskii~\cite{Frankl-Kupavskii-23} and Frankl and Wang~\cite{frankl-wang-2024}.

\begin{theorem}[Frankl and Kupavskii, Theorem 8 \cite{Frankl-Kupavskii-23}]
    \label{thm:cover-number-bound}
    Let $k \geq 2$ and $n \geq k^2$. If $\cF \subseteq \binom{[n]}{k}$ is an intersecting family with $\tau(\cF) = \tau$, then 
    $$|\cF| \leq k^{\tau}\binom{n-\tau}{k-\tau}.$$
\end{theorem}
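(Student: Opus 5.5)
The plan is the standard branching argument: construct a small collection of $\tau$-element sets such that every member of $\cF$ contains one of them, and then count the $k$-sets containing each one. The hypothesis $n \ge k^2$ should not be needed for the inequality itself. Note first that, since $\cF$ is intersecting, every $F\in\cF$ is itself a cover of $\cF$, so $\tau \le k$ and the binomial coefficient makes sense.

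First, we build a rooted tree $\mathcal{T}$ whose nodes are labelled by subsets $T\subseteq[n]$ with $|T|$ equal to the depth of the node.
\begin{enumerate}[label=(\roman*)]
\item The root is labelled by $\emptyset$.
\item If a node is labelled by $T$ with $|T|<\tau$, then $T$ is not a cover of $\cF$. So we can fix some $F_T\in\cF$ with $F_T\cap T=\emptyset$.
\item For each of the $k$ elements $x\in F_T$, we create a child labelled $T\cup\{x\}$; it has size $|T|+1$ because $x\notin T$.
\item We stop at depth $\tau$.
\end{enumerate}
Each node has at most $k$ children, so there are at most $k^\tau$ leaves, and each leaf label is a $\tau$-subset of $[n]$.

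Second, we show that every $G\in\cF$ contains the label of some leaf. Start at the root, whose label $\emptyset$ is contained in $G$. Suppose we are at a node with label $T\subseteq G$ and $|T|<\tau$. Since $\cF$ is intersecting, $G\cap F_T\neq\emptyset$, so we can choose $x\in G\cap F_T$ and move to the child $T\cup\{x\}$, which is still contained in $G$. After $\tau$ steps we reach a leaf whose label lies in $G$.

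Third, it follows that
$$\cF\subseteq \bigcup_{T \text{ a leaf label}} \Bigl\{G\in\tbinom{[n]}{k} : T\subseteq G\Bigr\}.$$
Each set in this union has exactly $\binom{n-\tau}{k-\tau}$ members, so
$$|\cF|\le k^{\tau}\binom{n-\tau}{k-\tau}.$$

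There is no real obstacle here. The one point to check is the invariant in the second step: the set $F_T$ must be chosen disjoint from $T$ so that the tree has depth exactly $\tau$ with distinct new elements at each level, and the intersecting property of $\cF$ is what guarantees that $G$ can always be followed down the tree.
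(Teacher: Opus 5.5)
Your branching argument is correct and complete. Choosing $F_T$ disjoint from $T$ makes every leaf label a $\tau$-set, and the intersecting property lets every $G\in\cF$ be followed down to a leaf label contained in it. The union bound then gives $|\cF|\le k^{\tau}\binom{n-\tau}{k-\tau}$.

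The paper does not prove this statement at all. It imports it as a black box from Frankl and Kupavskii, together with the hypothesis $n\ge k^2$. It then uses $n\ge k^2$ a second time, in the remark that $k^{\tau}\binom{n-\tau}{k-\tau}$ is non-increasing in $\tau$, to conclude $|\cF|\le k^3\binom{n-3}{k-3}$ whenever $\tau(\cF)\ge 3$.

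Your route gives more than the citation does. As you note, it needs no lower bound on $n$. Moreover, the construction only uses that labels of size smaller than the target depth are not covers. So stopping the tree at any depth $t\le\tau(\cF)$ gives $|\cF|\le k^{t}\binom{n-t}{k-t}$ directly. Taking $t=3$ yields the bound used in the proof of Theorem~\ref{thm:tau-atleast-two}, with no monotonicity remark and no condition on $n$. That proof still needs $n\ge k^3$ elsewhere, through Lemma~\ref{lem:tau-three-less-than}.

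The citation offers only brevity. For the weak $k^{\tau}$ form stated here, your elementary argument is the natural self-contained proof.
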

Observe that $k^{\tau}\binom{n-\tau}{k-\tau} \geq k^{\tau+1}\binom{n-\tau-1}{k-\tau-1}$ if $k \geq 2$ and $n \geq k^2.$ Therefore, if $\cF \subset \binom{[n]}{k}$ is intersecting with $\tau(\cF) \geq 3,$ then $|\cF| \leq k^3\binom{n-3}{k-3}.$



\begin{theorem}[Frankl and Wang \cite{frankl-wang-2024}]
\label{thm:Frankl-Wang-Cross-intersecting-estimate}
    Let $\mathcal{A} \subset \binom{[n]}{a}$ and $\mathcal{B} \subset \binom{[n]}{b}$ be nonempty cross-intersecting families with $n \geq a+ b$ and $a \leq b.$ Then $$|\mathcal{A}|+ |\mathcal{B}| \leq \binom{n}{b}-\binom{n-a}{b}+1.$$
\end{theorem}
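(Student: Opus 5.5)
The statement is quoted from Frankl and Wang, but here is how I would prove it directly. The plan is to reduce to shifted families and then use the Kruskal--Katona theorem, with an induction on $n$ as a fallback.

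\textbf{Reduction to shifted families.} Apply the shifting operations $S_{ij}$ simultaneously to $\mathcal{A}$ and $\mathcal{B}$. This preserves $|\mathcal{A}|$ and $|\mathcal{B}|$, preserves non-emptiness, and preserves the cross-intersecting property. So we may assume both families are shifted.

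\textbf{Base case $n=a+b$.} Here $A\cap B=\emptyset$ for $|A|=a$, $|B|=b$ forces $B=[n]\setminus A$. Cross-intersection therefore says that $\mathcal{A}$ and $\{[n]\setminus B : B\in\mathcal{B}\}$ are disjoint subfamilies of $\binom{[n]}{a}$. Hence
$$|\mathcal{A}|+|\mathcal{B}|\le \binom{n}{a}=\binom{n}{b}=\binom{n}{b}-\binom{n-a}{b}+1,$$
since $\binom{n-a}{b}=\binom{b}{b}=1$. This gives equality in the base case.

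\textbf{Main route via Kruskal--Katona.} Fix $m=|\mathcal{A}|\ge 1$. A $b$-set lies in $\mathcal{B}$ only if it meets every member of $\mathcal{A}$. Equivalently, its complement, an $(n-b)$-set, contains no member of $\mathcal{A}$. So $\mathcal{B}$ misses at least the $b$-sets whose complements lie in the upper shadow of $\mathcal{A}$ at level $n-b$. By the Kruskal--Katona theorem (applied to complements), the size $f(m)$ of this upper shadow is minimized when $\mathcal{A}$ is an initial segment of the lexicographic order. This gives
$$|\mathcal{A}|+|\mathcal{B}|\le \binom{n}{b}-\bigl(f(m)-m\bigr).$$
Non-emptiness of $\mathcal{B}$ restricts $m$ to those values with $f(m)<\binom{n}{b}$. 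It then remains to show
$$f(m)-m\ \ge\ f(1)-1=\binom{n-a}{b}-1$$
for all admissible $m$. Here $f(1)=\binom{n-a}{b}$ counts the $b$-sets disjoint from the single set $[a]$.

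\textbf{The main obstacle: monotonicity of $f(m)-m$.} I would try to show that $f(m)-m$ is non-decreasing on the admissible range. The approach is to track how the upper shadow of a lex-initial segment grows when one more $a$-set is added: the new set should contribute at least one new $b$-set disjoint from it, as long as $f(m)<\binom{n}{b}$. The hypothesis $n\ge a+b$, together with $a\le b$, is what should guarantee this. When some added set contributes no new disjoint $b$-set, the segment already covers all $b$-sets, so $\mathcal{B}$ would be empty; this excludes such $m$. I expect the bookkeeping at block boundaries of the lex order to be the delicate part.

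\textbf{Fallback: induction on $n$.} If the monotonicity argument gets messy, induct on $n$ using the shifted structure. Split each family according to whether it contains $n$, writing $\mathcal{A}(\bar n)$, $\mathcal{A}(n)$, and similarly for $\mathcal{B}$. Use Pascal's identity
$$\binom{n}{b}-\binom{n-a}{b}=\Bigl[\binom{n-1}{b}-\binom{n-1-a}{b}\Bigr]+\Bigl[\binom{n-1}{b-1}-\binom{n-1-a}{b-1}\Bigr].$$
Bound $|\mathcal{A}(\bar n)|+|\mathcal{B}(\bar n)|$ by induction, and bound the remaining parts using the standard fact that for shifted families with $n>a+b$ the links $\mathcal{A}(n)$ and $\mathcal{B}(n)$ are cross-intersecting in $[n-1]$. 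The difficulty here is that the uniformities $(a-1,b-1)$ do not directly produce the second bracket. One has to pair terms carefully, for instance $\mathcal{A}(n)$ with $\mathcal{B}(\bar n)$, and treat separately the cases where some part is empty. That is where I expect the real work to lie.
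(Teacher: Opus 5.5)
The paper gives no proof of this statement (it is quoted from Frankl and Wang), so your proposal has to stand on its own. Its main route has a false key step.

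The shifting reduction, the base case $n=a+b$, and the Kruskal--Katona bound $|\mathcal{A}|+|\mathcal{B}|\le\binom{n}{b}-(f(m)-m)$ are all correct. The bound is even attained: take $\mathcal{A}$ lex-initial and $\mathcal{B}$ its largest cross-intersecting partner. So the inequality $f(m)-m\ge\binom{n-a}{b}-1$ on the admissible range is \emph{equivalent} to the theorem, and all the difficulty sits there.

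Your plan to get it from monotonicity of $f(m)-m$ fails. Let $L$ be the $(m+1)$-st $a$-set in lex order. Any $(n-b)$-set $T\supseteq L$ containing some $r<\max L$ with $r\notin L$ already contains a lex-smaller $a$-set (replace $\max L$ by $r$). By contrast, $L\cup R$ with $R\subseteq(\max L,n]$ contains no lex-smaller $a$-set. Hence
\[
f(m+1)-f(m)=\binom{n-\max L}{n-a-b},
\]
which is $0$ as soon as $\max L>a+b$, whether or not the shadow is full. For instance, take $a=b=2$, $n=5$, with lex order $12,13,14,15,\dots$. Then $f(1),\dots,f(4)=3,5,6,6$, so $f(m)-m=2,3,3,2$. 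Yet $f(4)=6<\binom{5}{3}$, and the partner $\mathcal{B}$ is the four pairs through $1$. So the claim that a lex set contributing nothing new forces $\mathcal{B}=\emptyset$ is false, and the minimum of $f(m)-m$ is also attained far from $m=1$. Finishing this route needs a global analysis of $f(m)-m$ along the lex order, not one-step monotonicity.

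The fallback induction on $n$ is workable, and the link term is easier than you fear.
\begin{itemize}
\item The $\bar n$-parts contain $[a]$ and $[b]$, so they are nonempty and induction applies to them.
\item If $\mathcal{A}(n)$ and $\mathcal{B}(n)$ are both nonempty (which forces $a\ge2$), induction with $(a-1,b-1)$ gives $|\mathcal{A}(n)|+|\mathcal{B}(n)|\le\binom{n-1}{b-1}-\binom{n-a}{b-1}+1$. This is at most $\binom{n-1}{b-1}-\binom{n-1-a}{b-1}$, because $\binom{n-a}{b-1}-\binom{n-1-a}{b-1}=\binom{n-1-a}{b-2}\ge1$.
\item If $\mathcal{A}(n)=\emptyset$, every member of $\mathcal{B}(n)$ meets $[a]\in\mathcal{A}$, which gives the needed bound.
\end{itemize}

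The case you leave open is $\mathcal{B}(n)=\emptyset\neq\mathcal{A}(n)$, and this is exactly where $a\le b$ must be used. The configuration $\mathcal{B}=\{[b]\}$, $\mathcal{A}=\{A: A\cap[b]\neq\emptyset\}$ lies in this case and violates the bound when $a>b$ (e.g.\ $a=3$, $b=2$, $n=6$ gives $17>13$). In this case you only know that the members of $\mathcal{A}(n)$ meet $[b]$, so you need
\[
\binom{n-1}{a-1}-\binom{n-1-b}{a-1}\le\binom{n-1}{b-1}-\binom{n-1-a}{b-1}\qquad(2\le a\le b,\ n-1\ge a+b).
\]
Your sketch contains no argument for this, nor any other use of $a\le b$. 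As written, neither route is a complete proof.
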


We will also need a few lemmas. For the rest of this section, fix  $s=3$ and 
for $x,y \in [n]$ write $\cF_{x,y} = \cF_{x,y}^{k,3}.$

\begin{lemma}
    Let $y\in [n]$ such that $5 \le y\le \frac{n}{2}$. Then $|\cF_{1,y}| \leq |\cF_{1,4}|$. 
\end{lemma}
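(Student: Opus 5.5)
We compare $|\cF_{1,y}|$ with $|\cF_{1,4}|$ by exhibiting an injection $\cF_{1,y}\to\cF_{1,4}$, or, equivalently, by counting both families through a common decomposition. Here $\cF_{1,y}=\cF^{k,3}_{1,y}$ is the set of $3$-stable $k$-subsets of $[n]$ containing $1$ and $y$. Note that $d(1,4)=3$ and, since $y\le n/2$, $d(1,y)=y-1\ge 4$.

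\textbf{Step 1: decomposition.} A set $F\in\cF_{1,y}$ is determined by two pieces: a $3$-stable selection $P$ from the ``inner'' arc $[4,y-3]$, and a selection $Q$ from the ``outer'' arc $[y+3,n-2]$. The sets $P$ and $Q$ are unrestricted by each other, apart from the total size $|P|+|Q|=k-2$. The number of ways to pick $j$ elements with pairwise gaps at least $3$ in a path of $m$ vertices is $\binom{m-2(j-1)}{j}$. Hence
$$|\cF_{1,y}|=\sum_{j}\binom{(y-6)-2(j-1)}{j}\binom{(n-y-4)-2(k-3-j)}{k-2-j},$$
with the convention that $\binom{a}{b}=0$ for $a<b$, and suitable care when an arc is empty. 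For $y=4$ the inner arc contributes nothing, so only $j=0$ survives and
$$|\cF_{1,4}|=\binom{(n-8)-2(k-3)}{k-2}=\binom{n-2k-2}{k-2}.$$

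\textbf{Step 2: a cleaner injection.} Instead of summing, I would build an injection directly. Given $F\in\cF_{1,y}$, list its elements other than $1$ in cyclic order: $y_1<\dots<y_{k-1}$, where $y=y_{j+1}$ and the first $j$ of them lie strictly between $1$ and $y$. The consecutive gaps around the cycle are $g_0=y_1-1,\ g_1,\dots,g_{k-1}=n+1-y_{k-1}$. Each gap is at least $3$, the gaps sum to $n$, and $F$ is determined by the gap sequence starting at $1$ together with the marked position $j+1$ (the position of $y$).

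Define $\phi(F)$ as the set whose gap sequence starting from $1$ is obtained by rotating the gaps so that the first gap is exactly $3$. Concretely: take the gap sequence $(g_0,\dots,g_{k-1})$, merge the first $j+1$ gaps into one block of total length $y-1$, replace that block by a single gap equal to $3$, and distribute the surplus $y-4$ together with the remaining gaps $g_{j+1},\dots,g_{k-1}$ and the $j$ removed points.

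The difficulty is that the number of elements must stay $k$. So a more robust approach is needed: cut at $y$, and map the $j$ inner elements to the outer side. This yields a map $F\mapsto \{1,4\}\cup\big((F\setminus\{1,y\}\setminus [2,y-1])+ (4-y)\big)\cup R$, where $R$ re-embeds the inner points shifted to the end of the cycle. Verifying injectivity and $3$-stability of this map is fiddly.

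\textbf{Step 3: direct comparison (the approach I would carry out).} Since an explicit injection is fiddly, I would instead compare the counts. By Step 1, $|\cF_{1,4}|$ counts $(k-2)$-subsets with gaps of at least $3$ on a path of length $L=n-7$, namely $[7,n-2]$. For $y\ge5$, $|\cF_{1,y}|$ counts configurations on two paths, of lengths $a=y-6$ and $b=n-y-4$, with $a+b=n-10=L-3$. Concatenating the two paths and inserting a buffer of $3$ vertices between them gives an injection into $3$-stable $(k-2)$-subsets of a path of length $a+3+b=L$. Stability across the junction is automatic, because the buffer vertices are never chosen. This gives $|\cF_{1,y}|\le|\cF_{1,4}|$.

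The main obstacle is the boundary bookkeeping. One must check that the arc lengths are exactly as stated, including the degenerate cases $y\in\{5,6,7,8\}$ where the inner arc is empty or short. In those cases the bound is immediate, since then only $j=0$ is possible and $b<L$ gives a smaller binomial.
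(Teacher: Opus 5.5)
Your Step 3 has the right idea, and it is essentially the paper's argument. However, the count is off by one, and this breaks the injection as you state it.

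The target $[7,n-2]$ has $n-8$ vertices, not $n-7$; your own Step~1 formula $\binom{(n-8)-2(k-3)}{k-2}$ uses $n-8$. The arcs $[4,y-3]$ and $[y+3,n-2]$ have $a+b=n-10$ vertices in total (for $y\ge 6$), so there is room for a buffer of only $2$ vertices. With $3$ buffer vertices the concatenated path has $n-7$ vertices and cannot sit inside $[7,n-2]$. For instance, keeping the outer part fixed forces a shift of the inner part by $2$, which sends $4$ to $6$, at distance $2$ from the new element $4$.

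The repair is that $2$ buffer vertices already suffice, because chosen points on opposite sides are then at distance at least $3$. Explicitly, write $F=\{1,y\}\cup P\cup Q$ with $P\subseteq[4,y-3]$ and $Q\subseteq[y+3,n-2]$, and set $\phi(F)=\{1,4\}\cup(P+3)\cup Q$. Then $P+3\subseteq[7,y]$, its gap to $\min Q\ge y+3$ is at least $3$, and $\phi$ is clearly an injection $\cF_{1,y}\to\cF_{1,4}$. This covers every $y\ge 5$ at once, so your degenerate-case remark can be dropped. That remark is also inaccurate: for $y=7,8$ the inner arc contains $4$, so $j=1$ does occur.

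With this fix, your proof differs from the paper's only in one design choice. The paper first sets aside the sets containing $4$, which already lie in $\cF_{1,4}\cap\cF_{1,y}$. For the remaining sets it shifts the inner part from $[5,y-3]$ by $2$ into $[7,y-1]$, keeps the outer part fixed, and replaces $y$ by $4$. The buffer is then $\{y,y+1,y+2\}$, and the images avoid $y$, so they do not collide with the set-aside part. This is exactly why the paper can use a $3$-vertex buffer: removing $4$ from the inner arc frees one vertex.

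Your uniform shift by $3$ removes the case split and the disjointness check. The cost is that $\phi$ is not the identity on $\cF_{1,y}\cap\cF_{1,4}$, which does not matter for a cardinality bound.
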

\begin{proof} 
     Write $\cF_{1,y,4} = \{S \in \cF_{1,y} : 4 \in S\}$,  $\cF_{1,y,\overline{4}} = \cF_{1,y} \setminus \cF_{1,y,4}$, and 
      $\cF_{1,\overline{y},4} = \{S \in \cF_{1,4} : y \notin S\}$. Since $\cF_{1,y,4} = \cF_{1,4} \cap \cF_{1,y}$, the lemma will follow if we exhibit an injection $\cF_{1,y,\overline{4}} \to \cF_{1,\overline{y}, 4}$. 

We define a map $g:\cF_{1,y,\overline{4}} \to \cF_{1,\overline{y}, 4}$ as follows.
    Let $S \in \cF_{1,y,\overline{4}}$ and write $S = \{1,y\} \sqcup B \sqcup C$, where 
    $B = S \cap \{5,6,\dots, y-3\}$ and $C = S \setminus(\{1,y\} \cup B).$
    Let $f(B) = B + 2 = \{b + 2: b \in B\}$ and define $g(S) = \{1,4\} \cup f(B) \cup C$.
    
    We claim that $g(S) \in \cF_{1,\overline{y},4}$. Indeed, the smallest element of $B$ is $5$ and so, the smallest element of $f(B)$ is $7$. Also, since $y \in S$, the largest element of $B$ is at most $y-3$, and thus the largest element of $f(B)$ is at most $y-1$. Finally, the smallest element of $C$ is at least $y + 3$. Therefore, $|g(S)|=|S|$ and  $g(S)$ is 3-stable. 

    Finally, we claim that $g$ is injective. Indeed, let $S_1, S_2 \in \cF_{1,y,\overline{4}}$ with $S_1 \neq S_2$. For $i=1,2$ write $S_i = \{1,y\} \sqcup B_i \sqcup C_i$ as above. Suppose $g(S_1) = g(S_2)$. Since $f(B_1), f(B_2) \subseteq \{7, 8, \dots, y-1\}$ and $C_1, C_2 \subseteq \{y + 3, y + 4, \dots, n - 2\}$, we have that $f(B_1) = f(B_2)$ and $C_1 = C_2$. But then, $B_1 = B_2$, which implies $S_1 = S_2$, a contradiction.
\end{proof}

\begin{lemma}
    \label{lem:two-intersecting}
    Let $k \geq 4$ and $n \geq 3k$. Then for all $x,y \in [n]$ we have
    $$|\cF_{x,y}| \leq \binom{n-2k-2}{k-2}.$$
    Furthermore, when $d(x,y) = 3$ equality holds.
\end{lemma}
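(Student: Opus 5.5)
By symmetry of the cycle $C_n$ under rotations and reflections, we may assume $x=1$ and $y=1+d$ with $d=d(x,y)\le n/2$, so $y\le \frac n2+1$. If $d<3$, no $3$-stable set contains both $x$ and $y$, so $\cF_{x,y}=\emptyset$ and the bound is trivial. The plan is then to compute $|\cF_{1,4}|$ exactly, showing that it equals $\binom{n-2k-2}{k-2}$, and to reduce every $d\ge 4$ to this case using the preceding lemma ($|\cF_{1,y}|\le|\cF_{1,4}|$ for $5\le y\le n/2$).

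\textbf{The case $d=3$.} Take $S\in\cF_{1,4}$. Every other element of $S$ must be at distance at least $3$ from both $1$ and $4$. No element of $\{2,3\}$ qualifies, so the remaining $k-2$ elements lie in the path $[7,n-2]$, which has $m=n-8$ vertices. Conversely, any $3$-stable choice of $k-2$ elements inside this path gives a $3$-stable $k$-set together with $1,4$. Here the wrap-around plays no role, since the path has its ends cut off by $1$ and $4$.

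The number of ways to pick $j$ points from a path of $m$ vertices with consecutive gaps at least $3$ is $\binom{m-2(j-1)}{j}$, by the standard shift $a_i\mapsto a_i-2(i-1)$. With $j=k-2$ this gives
$$\binom{n-8-2(k-3)}{k-2}=\binom{n-2k-2}{k-2},$$
which is the claimed equality. The hypothesis $n\ge 3k$ guarantees that the path is long enough for this formula to be valid and nonnegative.

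\textbf{The case $d\ge 4$.} If $5\le y\le n/2$, the preceding lemma gives $|\cF_{1,y}|\le|\cF_{1,4}|$ directly. The only remaining value is the boundary case $y=\frac n2+1$, that is, $n$ even and $d=n/2$. For this case I will re-examine the injection $g$ from that lemma's proof. Its validity used only three facts: $B\subseteq[5,y-3]$, so that $f(B)\subseteq[7,y-1]$; $C\subseteq[y+3,n-2]$; and the element $y$ is removed, which frees room for the shift by $2$. None of these uses $y\le n/2$, so the same map should give an injection $\cF_{1,y,\overline4}\to\cF_{1,\overline y,4}$ for every $5\le y\le n-3$, and in particular at $y=\frac n2+1$.

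If that verification turns out to be delicate, the fallback is a direct count. The remaining $k-2$ elements of a set in $\cF_{1,y}$ split between two disjoint paths $[4,y-3]$ and $[y+3,n-2]$. Summing products of the path formulas over the split $j+(k-2-j)$ and comparing with $\binom{n-2k-2}{k-2}$ via Vandermonde-type monotonicity should give the bound, since merging two paths into one and adding one spacer only increases the count.

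\textbf{Main obstacle.} The main risk is the boundary bookkeeping. One must check that no extra wrap-around constraint arises between $f(B)$ and $C$ or between $C$ and $1$. One must also confirm that the case $y=5$, where $B=\emptyset$ and the interval $[5,y-3]$ is empty, is handled correctly. Both of these I expect to be routine.
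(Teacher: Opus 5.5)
Your proposal is correct and follows the paper's proof: reduce to $d(x,y)=3$ via the preceding injection lemma, then count $|\cF_{1,4}|$ directly. Your shift formula $\binom{m-2(j-1)}{j}$ is the same count as the paper's stars-and-bars computation. Your extra care with the boundary $y>n/2$ is warranted, since the paper's normalization to $y\le n/2$ glosses over it; for odd $n$ the value $y=(n+1)/2$ also falls outside the lemma's stated range, which you missed, but your observation that the injection $g$ works for all $5\le y\le n-3$ covers it too.
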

\begin{proof}
    Let $x,y \in [n]$. By relabeling, we may assume $x = 1$ and $y \leq \frac{n}{2}$. If $d(1,y) < 3$, then no $3$-stable set can contain both $1$ and $y$, and thus $|\cF_{1,y}| = 0$. Suppose  $d(1,y) \ge 3$. By the previous lemma, $|\cF_{1,y}| \leq |\cF_{1,4}|$. Therefore,  it is enough to show that $|\cF_{1,4}|=\binom{n-2k-2}{k-2}$.

We have
    \begin{align*}
        |\cF_{1,4}|&= |\{(a_1,a_2,\dots,a_{k-2}):7 \leq a_1<\cdots<a_{k-2} \leq n-2, ~a_{i}-a_{i-1} \geq 3 \text{ for }2\leq i \leq k-2\}| \\
        &=|\{(a_1', a_2',\dots,a_{k-2}'):1 \leq a_1'<\cdots<a_{k-2}' \leq n-8, ~a_{i}'-a_{i-1}' \geq 3 \text{ for }2\leq i \leq k-2\}|\\
        &=|\{(c_1,\dots,c_{k-1}):c_1,c_{k-1} \geq 0, ~c_i \geq 3 \text{ for }2 \leq i \leq k-2, ~\sum_{i=1}^{k-1}c_i = n-9\}| \\
        &=|\{(c_1',\dots,c_{k-1}'):c_i' \geq 0 \text{ for }1 \leq i \leq k-1, ~\sum_{i=1}^{k-1}c_i' = n-9-3(k-3)=n-3k\}| \\
        &= \binom{n-2k-2}{k-2},
    \end{align*}
    As needed. The third equality follows by letting $c_1=a_1'-1$, $c_{k-1}=n-8-a_{k-2}'$, and $c_i=a_{i}'-a_{i-1}'$ for all $2\le i\le k-2$. 
\end{proof}

\begin{proposition}
    \label{prop:tau-is-two}
    Let $k \geq 4$ and $n \geq 3k$. Suppose $\cF \subseteq \bstable{n}{k}{3}$ is an intersecting family with $\tau(\cF) = 2$. Then
    $$|\cF| \leq \binom{n-2k-2}{k-2} + \binom{n-2}{k-1} - \binom{n-k-1}{k-1} + 1.$$
\end{proposition}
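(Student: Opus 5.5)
Since $\tau(\cF)=2$, fix a cover $\{x,y\}$ of $\cF$ and decompose, as in the proof of Theorem~\ref{thm:stable-HM},
$$\cF=\cH_{x,y}\sqcup\cH_x\sqcup\cH_y .$$
Here $\cH_{x,y}$ consists of the sets containing both $x$ and $y$, $\cH_x$ of those containing $x$ but not $y$, and $\cH_y$ of those containing $y$ but not $x$. Because $\tau(\cF)=2$, both $\cH_x$ and $\cH_y$ are nonempty; otherwise $y$ or $x$ alone would be a cover. We bound the pieces separately.

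\textbf{Bounding $\cH_{x,y}$.} Every set in $\cH_{x,y}$ is a $3$-stable $k$-set containing $\{x,y\}$, so $\cH_{x,y}\subseteq \cF_{x,y}$. Lemma~\ref{lem:two-intersecting} then gives
$$|\cH_{x,y}|\le \binom{n-2k-2}{k-2},$$
which is exactly the first term of the claimed bound.

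\textbf{Bounding $\cH_x\cup\cH_y$.} Remove the cover elements and set
$$\mathcal{A}=\{S\setminus\{x\}:S\in\cH_x\},\qquad \mathcal{B}=\{S\setminus\{y\}:S\in\cH_y\}.$$
Both are $(k-1)$-uniform and nonempty. Neither contains $x$ or $y$, so we may regard them as families on the ground set $[n]\setminus\{x,y\}$ of size $n-2$. They are cross-intersecting: a set of $\cH_x$ and a set of $\cH_y$ intersect, and the intersection cannot be $x$ (absent from sets of $\cH_y$) or $y$ (absent from sets of $\cH_x$), so it survives the deletion. Removing a fixed element is injective on each of $\cH_x$ and $\cH_y$, so $|\mathcal{A}|=|\cH_x|$ and $|\mathcal{B}|=|\cH_y|$.

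We apply Theorem~\ref{thm:Frankl-Wang-Cross-intersecting-estimate} with ground set size $n-2$ and $a=b=k-1$. Its hypothesis $n-2\ge 2(k-1)$ holds since $n\ge 3k$. This yields
$$|\cH_x|+|\cH_y|\le \binom{n-2}{k-1}-\binom{n-2-(k-1)}{k-1}+1=\binom{n-2}{k-1}-\binom{n-k-1}{k-1}+1,$$
which is exactly the remaining part of the claimed bound.

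\textbf{Conclusion.} Adding the two estimates gives the proposition. Stability is used only in the bound on $\cH_{x,y}$. For $\cH_x\cup\cH_y$ we deliberately pass to arbitrary $(k-1)$-sets, which loses nothing needed for this non-optimal bound. The only points to verify are that $\mathcal{A}$ and $\mathcal{B}$ are nonempty and cross-intersecting on the reduced ground set, and that the ground-set size condition of Frankl--Wang holds; I do not expect a genuine obstacle.
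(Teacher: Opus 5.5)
Your proof is correct and is essentially the paper's proof. The paper also splits $\cF=\cH_{x,y}\sqcup\cH_x\sqcup\cH_y$ along a $2$-cover, bounds $\cH_{x,y}$ by Lemma~\ref{lem:two-intersecting}, and applies Theorem~\ref{thm:Frankl-Wang-Cross-intersecting-estimate} to the $(k-1)$-uniform families obtained by deleting the cover elements, viewed on the ground set $[n]\setminus\{x,y\}$; your explicit check of the Frankl--Wang hypothesis $n-2\ge 2(k-1)$ is a detail the paper leaves implicit.
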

\begin{proof}
    Let $x,y\in [n]$ be a vertex cover of $\cF$. As before, write $\cF = \cH_{x,y} \sqcup \cH_x \sqcup \cH_y$, where
    $$\cH_{x,y} = \{S \in \cF :x,y \in S\}, \quad
        \cH_x =\{S \in \cF: x \in S, y \not\in S\}, \quad
        \cH_y = \{S \in \cF: x \not\in S, y \in S\}.$$
    Note that by Lemma~\ref{lem:two-intersecting}, $|\cH_{x,y}| \leq |\cF_{x,y}| \leq \binom{n-2k-2}{k-2}.$ 
    
    We would like to bound $|\cH_x|+|\cH_y|$. Since $\cF$ is intersecting and $x\neq y$, $\cH_x$ and $\cH_y$ are cross-intersecting. Moreover, both $\cH_x$ and $\cH_y$ are non-empty, for otherwise $\tau(\cF) = 1$. Define 
    $$\cH_x'=\{A-\{x\}:A \in \cH_x\}, \quad
    \cH_y'=\{A-\{y\}:A \in \cH_y\}.$$
    Then $\cH_x'\subset \binom{[n] \setminus \{x,y\}}{k-1}$ and $\cH_y'\subset \binom{[n] \setminus \{x,y\}}{k-1}$ are non-empty and cross-intersecting. By Theorem \ref{thm:Frankl-Wang-Cross-intersecting-estimate}  we have $$|\cH_x|+|\cH_y|=|\cH_x'|+|\cH_y'| \leq \binom{n-2}{k-1}-\binom{n-k-1}{k-1}+1.$$ 
    
    Therefore, $$|\cF| = |\cH_{x,y}|+|\cH_x|+|\cH_y| \leq \binom{n-2k-2}{k-2} + \binom{n-2}{k-1}-\binom{n-k-1}{k-1}+1.$$
\end{proof}

We need one more lemma, whose proof is given in Appendix 1. 

\begin{lemma}
    \label{lem:tau-three-less-than}
    For all $k \geq 4$ and $n \geq k^3$, 
    $$k^3 \binom{n-3}{k-3} \leq \binom{n-2}{k-1} - \binom{n-k-1}{k-1}.$$
\end{lemma}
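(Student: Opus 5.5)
The plan is to rewrite the right-hand side as a sum of $k-1$ binomial coefficients of the form $\binom{\cdot}{k-2}$, bound each summand below by a single one, and compare with $k^3\binom{n-3}{k-3}$ through an elementary ratio estimate that uses $n \ge k^3$.

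\textbf{Telescoping.} Applying Pascal's rule $\binom{m}{k-1}-\binom{m-1}{k-1}=\binom{m-1}{k-2}$ for $m=n-2,n-3,\dots,n-k$ and summing gives
$$\binom{n-2}{k-1}-\binom{n-k-1}{k-1}=\sum_{j=0}^{k-2}\binom{n-3-j}{k-2}\ \ge\ (k-1)\binom{n-k-1}{k-2}.$$
So it suffices to prove $(k-1)\binom{n-k-1}{k-2}\ge k^3\binom{n-3}{k-3}$.

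\textbf{Reducing to a ratio inequality.} I will use the identity $\binom{n-3}{k-2}=\binom{n-3}{k-3}\frac{n-k}{k-2}$, together with
$$\frac{\binom{n-k-1}{k-2}}{\binom{n-3}{k-2}}=\prod_{i=0}^{k-3}\frac{n-k-1-i}{n-3-i}\ \ge\ \Bigl(1-\frac{k-2}{n-k-1}\Bigr)^{k-2}\ \ge\ 1-\varepsilon,\qquad \varepsilon:=\frac{(k-2)^2}{n-k-1}.$$
The first inequality holds because each factor equals $1-\frac{k-2}{n-3-i}\ge 1-\frac{k-2}{n-k-1}$, and the second is Bernoulli's inequality. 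Combining these, the target reduces to
$$(k-1)\,\frac{n-k}{k-2}\,(1-\varepsilon)\ \ge\ k^3 .$$

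\textbf{Using $n\ge k^3$.} The left-hand side is increasing in $n$: the factor $n-k$ grows, and $\varepsilon$ decreases. Hence it is enough to check the inequality at $n=k^3$, where
$$\frac{(k-1)(k^3-k)}{k-2}=k^3\,(1+a),\qquad a=\frac{k^2-k+1}{k^2(k-2)}>\frac1k .$$
At $n=k^3$,
$$\varepsilon=\frac{(k-2)^2}{k^3-k-1}\le\frac{(k-2)^2}{k^2(k-1)}<\frac1k,$$
where the last step uses $(k-2)^2<k(k-1)$. Therefore
$$(1+a)(1-\varepsilon)\ >\ \Bigl(1+\frac1k\Bigr)\Bigl(1-\frac1k\Bigr)\ =\ 1-\frac1{k^2},$$
which is not quite $1$, so the constants need tightening. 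The product exceeds $1$ exactly when $a>\varepsilon/(1-\varepsilon)$, so I will verify that condition directly:
$$\frac{\varepsilon}{1-\varepsilon}=\frac{(k-2)^2}{k^3-k-1-(k-2)^2}.$$
For $k=4$ this is $4/55$, while $a=13/32$. For general $k\ge 4$ the denominator $k^3-k-1-(k-2)^2$ is at least $k^3-2k^2$, so
$$\frac{\varepsilon}{1-\varepsilon}\le\frac{(k-2)^2}{k^2(k-2)}=\frac{k-2}{k^2}<\frac1k<a .$$

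\textbf{Main obstacle.} The only delicate point is this final constant chase. The factor $(k-1)/(k-2)$ gives only about a $1/k$ margin over $k^3$, and the loss from replacing $\binom{n-3-j}{k-2}$ by $\binom{n-k-1}{k-2}$ is also of order $1/k$ when $n=k^3$. So the estimates must be kept exact rather than asymptotic, and I would double-check the case $k=4$ numerically, which the computation above already does.
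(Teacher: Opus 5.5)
Your proof is correct and follows essentially the same route as the paper's. Both arguments telescope via Pascal's rule into $k-1$ binomials $\binom{\cdot}{k-2}$, compare with $\binom{n-3}{k-3}$ through the factor $\frac{n-k}{k-2}$ and a product $\prod\bigl(1-\frac{k-2}{\cdot}\bigr)$ linearized by Bernoulli/Weierstrass, and check the resulting monotone expression at $n=k^3$. The only difference is that you replace every summand by the smallest one, which roughly doubles the loss (about $(k-1)(k-2)$ instead of the paper's $\binom{k-1}{2}$) and forces your tighter final constant chase, whereas the paper bounds each ratio separately as $R_j\ge\frac{n-k}{k-2}-(j-1)$ and ends with a comfortable margin.
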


We are now ready to prove Theorem \ref{thm:tau-atleast-two}.
\begin{proof}[Proof of Theorem \ref{thm:tau-atleast-two}]
    If $\tau(\cF) \geq 3$, then by Theorem~\ref{thm:cover-number-bound} and Lemma~\ref{lem:tau-three-less-than} we have  $$|\cF| \leq k^3 \binom{n-3}{k-3}\leq \binom{n-2}{k-1} - \binom{n-k-1}{k-1} \leq \binom{n-2k-2}{k-2} + \binom{n-2}{k-1} - \binom{n-k-1}{k-1} + 1.$$
    If $\tau(\cF) = 2$, then by Proposition~\ref{prop:tau-is-two} we have
    $$|\cF| \leq \binom{n-2k-2}{k-2} + \binom{n-2}{k-1} - \binom{n-k-1}{k-1} + 1.$$
\end{proof}

\section{A reduction lemma}
\label{sec:tec}
In this section, we prove  the following: 
\begin{lemma}
    \label{thm:reduction}
    Let $k \geq 3$ and $n \geq 3k$. If every non-star intersecting family $\cF \subset \bstable{n}{k}{3}$ satisfies
    $$|\mathcal{F}| < \frac{n}{k(n-3k+2)}\binom{n-2k-1}{k-1},$$ then $\chi(\kgstable{n}{k}{3}) = n - 3k+3.$
\end{lemma}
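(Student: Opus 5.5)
The plan is to argue by contradiction using a counting argument together with a contraction step that removes star centers. Write $N(n) := \frac{n}{k}\binom{n-2k-1}{k-1}$ for the number of vertices of $\kgstable{n}{k}{3}$. This formula comes from double counting: each element lies in exactly $\binom{n-2k-1}{k-1}$ sets of $\bstable{n}{k}{3}$, which is Talbot's star size with $s=3$. The upper bound $\chi \le n-3k+3$ is already given by the explicit coloring in the introduction. So suppose $c$ is a proper coloring with $m := n-3k+2$ colors. Each color class is an intersecting family, so it is either a star or a non-star. By hypothesis, every non-star class has size $< N(n)/m$. If no class were a star, the total number of vertices would be $< m\cdot N(n)/m = N(n)$, a contradiction. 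Hence at least one class is a star.

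Next I remove a star center and contract. Let $\cC_1$ be a star class with center $z$. Every set avoiding $z$ is colored by the remaining $m-1$ colors. Deleting $z$ from the cycle $C_n$ and closing up the gap gives a cycle on $n-1$ points. Any $3$-stable $k$-set of $C_{n-1}$, read back in $C_n$, is still $3$-stable, because cyclic distances can only grow when $z$ is reinserted. So $\kgstable{n-1}{k}{3}$ embeds as an induced subgraph of the sets avoiding $z$, and it receives a proper $(m-1) = (n-1)-3k+2$ coloring. Moreover, a non-star intersecting family on $C_{n-1}$ maps injectively to a non-star intersecting family on $C_n$. The natural continuation is to iterate: pass to $n-1$ and repeat until $n = 3k$. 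At $n=3k$ the graph is $\kgstable{3k}{k}{3}$, which has exactly $3$ vertices, pairwise disjoint (a triangle), so it cannot be properly colored with $m-(n-3k) = 2$ colors.

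The main obstacle is that the hypothesis does not transfer down. At stage $n-1$ I would need non-star classes to have size $< N(n-1)/(m-1)$, but I only know they have size $< N(n)/m$, and this threshold is larger. I would therefore not iterate the hypothesis. Instead I would run the count once globally. Let $T$ be the set of centers of all star classes, with $|T| = t \le$ the number of star classes. Contract all of $T$ at once, so that the sets avoiding $T$ contain a copy of $\bstable{n-t}{k}{3}$, which has $N(n-t)$ elements. These must all be colored by the $m-t$ non-star colors. This gives
$$N(n-t) \;<\; (m-t)\,\frac{N(n)}{m}.$$
The remaining task, which I expect to be the hardest, is to show this inequality is false for every $0\le t\le m-1$. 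I would first try to check it at the extreme values of $t$: at $t=0$ it is the count above, and at $t=m-1$ it compares $N(3k+1)$ with $N(n)/m$. Then I would exploit convexity or monotonicity of $t\mapsto N(n-t)/(m-t)$ to cover the interior.

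If this ratio fails to behave, I would refine the count in one of two ways. The first is to count only sets containing a fixed element outside $T$. The second is to use that a star class with center $z$ contains only sets through $z$, so its members can be discarded before the non-star classes are counted, giving a sharper bound on what the non-star classes must cover. Whichever version of the inequality I end up proving, that estimate is the step I expect to determine whether the argument closes.
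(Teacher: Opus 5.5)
Your treatment of the case with no star class matches the paper's Case~2, and so does your deduction that some class must be a star. The gap is in the star case. The global inequality $N(n-t)<(m-t)N(n)/m$ that you hope to refute is in fact \emph{true} for $t\ge 1$, so no contradiction can come out of it. Already at $t=1$, using $\binom{n-2k-2}{k-1}=\frac{n-3k}{n-2k-1}\binom{n-2k-1}{k-1}$,
\[
\frac{N(n-1)/(m-1)}{N(n)/m}=\frac{n-1}{n}\cdot\frac{(n-3k+2)(n-3k)}{(n-3k+1)(n-2k-1)}<\frac{n-3k+1}{n-2k-1}<1\qquad(k\ge 3).
\]
At $t=m-1$ you have $n-t=3k-1$ (not $3k+1$) and $N(3k-1)=0$. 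The refinements you list are still counting arguments and cannot help either. In the standard $(m+1)$-coloring $A\mapsto\min\{\min A,n-3k+3\}$, the $m$ star classes already cover every vertex except the single set $\{n-3k+3,n-3k+6,\dots,n\}$. Size bounds on the classes, with the huge non-star bound $N(n)/m$, cannot tell an $m$-coloring apart from this near-coloring. The obstruction in the star case is chromatic, not a matter of cardinality.

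The missing idea is to keep more of the graph when you delete the center. Rotate so that the center is $n$. Every $A\subseteq[n-1]$ with $A(i+1)-A(i)\ge 3$ for $i<k$ and $(n-1)+A(1)-A(k)\ge 2$ is $3$-stable in $[n]$: the deleted point $n$ sits in the wrap-around gap and adds $1$ to it. Such an $A$ also avoids $n$, so it is not in the star class. Hence the other $m-1=n-3k+1$ colors properly color $\KG\bigl(\bstable{n-1}{k}{\vec s}\bigr)$ with $\vec s=(3,\dots,3,2)$. By the theorem of Daneshpajouh and Oszt\'enyi (valid since $n-1\ge 3k-1$), this graph has chromatic number $(n-1)-3k+3=n-3k+2$, a contradiction. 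So a single star class already finishes the proof, with no iteration and no use of the size hypothesis. By insisting on $3$-stability in $C_{n-1}$, you discarded exactly the sets that make this work, and you were left needing the conjecture itself at $n-1$.
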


 We begin by recalling two results we will use. For $X \in \binom{[n]}{k}$, let $X(1) < X(2) < \cdots< X(k)$ be the ordered  elements of $X$. 
Given a $k$-vector $\vec{s} = (s_1,\dots,s_k)$ of positive integers, $X$ is said to be {\em $\vec{s}$-stable} if for $1 \leq i \leq k-1$, $X(i+1) - X(i) \geq s_i$ and $n + X(1) - X(k) \geq s_k$. We denote the set of all  $\vec{s}$-stable $k$-sets by $\bstable{n}{k}{\vec{s}}$. Observe that when $\vec{s}=(s,s,\dots, s)$, an $\vec{s}$-stable set is $s$-stable. For the proof of the lemma, we will use the following theorem from \cite{Daneshpajouh-Osztenyi-2021}. 
\begin{theorem}[Daneshpajouh and Osztényi, \cite{Daneshpajouh-Osztenyi-2021}]
    \label{cor:almost-3stable}
    Let $n,k$ be integers with $k \geq 2$ and $n \geq 3k-1$. Consider the $k$-vector $\vec{s} = (3, \dots, 3,2)$. Then,
    $$\chi\left(\KG\left( \bstable{n}{k}{\vec{s}} \right)\right) = n - 3k+3.$$
\end{theorem}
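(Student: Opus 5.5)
The plan is to prove the two inequalities separately: an explicit coloring for the upper bound, and a topological argument in the style of Schrijver's proof for the lower bound.

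\textbf{Upper bound.} I would use the same coloring as for Conjecture~\ref{conj:main}, namely $c(A)=\min\{\min(A),\,n-3k+3\}$ for $A\in\bstable{n}{k}{\vec{s}}$. Two sets whose minima are equal and smaller than $n-3k+3$ share that element, so the only thing to check is the last color class. These are the sets contained in the interval $I=[n-3k+3,n]$, which has $3k-2$ points.

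\begin{itemize}
\item The first $k-1$ gaps of such a set are each at least $3$, so its span $X(k)-X(1)$ is at least $3k-3$. Hence $X(1)\in\{n-3k+3,\,n-3k+4\}$.
\item The total slack is one unit, so at most one internal gap equals $4$ and all other gaps equal $3$.
\item Suppose $A,B$ are disjoint sets in $I$. Since they are disjoint they cannot both start at $n-3k+3$, so one of them, say $B$, starts at $n-3k+4$.
\item Then $B$ has no slack at all, so $B=\{n-3k+4+3i : 0\le i\le k-1\}$, ending at $n$. Now $A$ starts at $n-3k+3$, and its elements before its (at most one) gap of $4$ lie in the residue class of $n-3k+3$ mod $3$.
\item After a gap of $4$, the elements of $A$ fall into the residue class of $B$. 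A short check then shows that $A$ meets $B$: either at $n$, or at the element right after the long gap.
\end{itemize}

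The cyclic condition $n+X(1)-X(k)\ge 2$ plays no role in this part. The last class is therefore intersecting, and we get $n-3k+3$ colors.

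\textbf{Lower bound.} Suppose $c$ is a proper coloring with $t=n-3k+2$ colors. I would derive a contradiction from Fan's lemma, or equivalently the octahedral Tucker lemma, applied to sign vectors $x\in\{+,-,0\}^n\setminus\{0\}$. The labeling is in the spirit of Schrijver's proof and of the $s$-alternation arguments used by Meunier and by Chen for even $s$:

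\begin{itemize}
\item If $x^+$ or $x^-$ contains a member of $\bstable{n}{k}{\vec{s}}$, the label is $\pm(\text{largest color appearing}+\text{offset})$, with the sign telling which side contains it.
\item Otherwise the label is $\pm h(x)$, where $h(x)\le 3k-3$ is a combinatorial ``alternation-type'' statistic and the sign is that of a canonical coordinate, for instance the first nonzero one.
\end{itemize}

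The label set then lives in $\{\pm1,\dots,\pm(3k-3+t)\}=\{\pm1,\dots,\pm(n-1)\}$, and Fan's lemma yields a forbidden configuration. The missing ingredient is the key combinatorial lemma. It should say that if a sign vector has $3$-alternation at least $3k-2$, then one of its sides contains a $(3,\dots,3,2)$-stable $k$-set. Here $3$-alternation is the length of a longest cyclic alternating subsequence whose consecutive chosen positions are $\ge 3$ apart in a suitable sense.

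The single relaxed gap of $2$ is what should make this counting close up cyclically. Around the $n$-cycle we lose one unit, because $3k-3+2=3k-1$, which matches $n\ge 3k-1$. This is exactly the defect that makes the odd-$s$ fully stable case hard.

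\textbf{Main obstacle.} The main obstacle is defining $h$ so that three things hold at once:

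\begin{itemize}
\item the labeling is antipodal;
\item it has no complementary edges on chains $x\preceq y$;
\item the threshold $3k-2$ forces a stable set on one side.
\end{itemize}

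My first attempt would be to take $h(x)$ to be the greedy $3$-alternation computed from a fixed starting point, and to place the gap-$2$ allowance at the wrap-around. I would then check monotonicity under $x\preceq y$ directly, falling back on the cited Daneshpajouh--Osztényi argument if this greedy choice fails.
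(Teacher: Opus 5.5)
\paragraph{The lower bound is not established.}
Your upper bound is correct, but the count is off by one. The interval $[n-3k+3,n]$ has $3k-2$ points, and every $\vec{s}$-stable set has $X(k)-X(1)\ge 3k-3$. So $X(1)=n-3k+3$ is forced, and the last class is the single set $\{n-3k+3,n-3k+6,\dots,n\}$; there is no slack, and the case "$B$ starts at $n-3k+4$" never occurs. In fact $\min(A)\le n-3k+3$ for every $A$, so $c(A)=\min(A)$ is already a proper coloring whose classes are stars.

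The real problem is the lower bound, which is the whole content of the theorem. The paper does not prove it either: it quotes it from Daneshpajouh and Osztényi, whose argument is topological (Section~\ref{sec:top} notes that it relies on embeddings of neighborhood complexes). Your Tucker bookkeeping is fine, since $3k-3+t=n-1<n$. However, everything rests on a statistic $h$ that you do not define and a key lemma that you neither state precisely nor prove. What is actually needed is an antipodal labeling of the following set, with values in $\{\pm1,\dots,\pm(3k-3)\}$ and no complementary edges: the nonzero sign vectors $x$ such that neither $x^+$ nor $x^-$ contains a member of $\bstable{n}{k}{\vec{s}}$. These sign vectors form the face poset of the deleted join of $\sfK(\bstable{n}{k}{\vec{s}})$. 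So such a labeling is exactly a simplicial $\mathbb{Z}_2$-map from its barycentric subdivision to the boundary of the $(3k-3)$-dimensional cross-polytope, i.e.\ to $\bS^{3k-4}$. This is a genuine topological index bound, of the kind an embedding of $\sfK(\bstable{n}{k}{\vec{s}})$ into $\R^{3k-4}$ would give (compare Section~\ref{sec:top}). Your proposal reduces the theorem to this bound without establishing it.

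\paragraph{Your suggested labeling has a complementary edge.}
Take $x=(0,+,-,0,\dots,0)\preceq y=(-,+,-,0,\dots,0)$. Neither side of either vector contains an $\vec{s}$-stable $k$-set, since the only candidate, $\{1,3\}$, has a gap of $2$. Under either reading of your spacing condition, greedy or maximal, you get $h(x)=h(y)$:
\begin{itemize}
\item if consecutive chosen positions must be at least $3$ apart, both values are $1$;
\item if same-sign chosen positions must be at least $3$ apart, both values are $2$.
\end{itemize}
The first nonzero coordinates are $+$ and $-$, so $\lambda(x)=-\lambda(y)$.

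The standard argument for ordinary alternation avoids this: you prepend the earlier opposite-sign entry of $y$ to a maximal alternating sequence of $x$, which forces $h(y)>h(x)$. A spacing constraint is exactly what blocks that prepending. Finding a sign rule compatible with spacing is where the difficulty lies, not a routine check.

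As written, your proposal proves only the easy upper bound. Falling back on the citation, as the paper does, is legitimate, but then the Tucker sketch contributes nothing to the proof.
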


Observe that for $\vec{s}$ as in the theorem, we have $\bstable{n}{k}{3} \subseteq \bstable{n}{k}{\vec{s}}$.

We will also need the following proposition, counting the number of $3$-stable sets of size $k$.
\begin{proposition}[\cite{Estrugo-Pastine-2021}, \cite{Talbot-03}]
\label{prop:3-stable-vertex-set}
Let $k \geq 2$ and $n \geq 3k$. Then
$$\left| \bstable{n}{k}{3} \right| = \frac{n}{k}\binom{n-2k-1}{k-1}.$$
\end{proposition}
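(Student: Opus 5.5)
The plan is to double count incidences between elements of $[n]$ and $3$-stable $k$-sets. Consider the set of pairs $(x,S)$ with $S \in \bstable{n}{k}{3}$ and $x \in S$.
\begin{itemize}
\item Counting by $S$ first, every set $S$ contributes exactly $k$ pairs. So the number of pairs is $k\left|\bstable{n}{k}{3}\right|$.
\item Counting by $x$ first, the cyclic rotation $i \mapsto i+1 \pmod n$ preserves the distance $d$ and hence preserves $3$-stability. It is therefore a bijection between the $3$-stable $k$-sets containing $x$ and those containing $x+1$. So every $x \in [n]$ lies in the same number $N$ of sets, and the number of pairs is $nN$.
\end{itemize}
Equating the two counts gives $\left|\bstable{n}{k}{3}\right| = \frac{n}{k}N$. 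It remains to show $N = \binom{n-2k-1}{k-1}$.

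To compute $N$, count the $3$-stable $k$-sets containing $1$. Such a set has the form $\{1,a_1,\dots,a_{k-1}\}$, and $3$-stability on the cycle is equivalent to three conditions:
\begin{itemize}
\item $a_1 \ge 4$, from the distance to $1$ going forward;
\item $a_{k-1} \le n-2$, from the distance to $1$ going around the cycle;
\item $a_i - a_{i-1} \ge 3$ for $2 \le i \le k-1$.
\end{itemize}
The wrap-around distances between two of the $a_i$ need no separate check, because $1$ lies between them on the long arc. So we are counting $(k-1)$-subsets of the interval $[4,n-2]$, which has $n-5$ elements, with consecutive gaps at least $3$.

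The standard substitution $b_i = a_i - 2(i-1)$ turns these into arbitrary strictly increasing sequences $b_1<\dots<b_{k-1}$ in $[4,\,n-2-2(k-2)]$. This is an interval of $n-2k-1$ integers. The map is a bijection, since the gap condition $a_i - a_{i-1}\ge 3$ is exactly $b_i - b_{i-1} \ge 1$. Hence $N=\binom{n-2k-1}{k-1}$, as claimed.

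Alternatively, one could mimic the stars-and-bars computation in the proof of Lemma~\ref{lem:two-intersecting}, and it gives the same answer.

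There is no real obstacle. The only point needing care is the boundary bookkeeping in the reduction to the interval $[4,n-2]$, together with the remark that the hypothesis $n \ge 3k$ guarantees $n-2k-1 \ge k-1$, so the binomial coefficient is the genuine count and not a degenerate value.
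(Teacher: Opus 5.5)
Your proof is correct and complete. The paper gives no proof of this proposition: it imports the formula from Talbot and Estrugo--Pastine. So there is no in-paper argument to compare against, and yours makes the statement self-contained.

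The structure is the natural one:
\begin{itemize}
\item Rotation invariance of $d$ plus the incidence double count gives the factor $n/k$, reducing everything to $N$, the number of $3$-stable $k$-sets through the point $1$.
\item Once $1$ is fixed, the cyclic problem becomes a linear one. The conditions $a_1\ge 4$, $a_{k-1}\le n-2$ and consecutive gaps at least $3$ are exactly equivalent to $3$-stability, because every other pairwise distance is a sum of these gaps. Your remark about the long arc through $1$ is the right justification for the $a_i$--$a_j$ pairs.
\item The compression $b_i=a_i-2(i-1)$ is the bijective version of the stars-and-bars computation the paper carries out for $|\cF_{1,4}|$ in Lemma~\ref{lem:two-intersecting}. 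Indeed, fixing $a_1=4$ in your count reproduces $\binom{n-2k-2}{k-2}$.
\end{itemize}

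Your approach uses only tools already present in the paper. It also explains the equivalent form $\frac{n}{n-2k}\binom{n-2k}{k}$ used in the proof of Lemma~\ref{lem:three>one}, which follows from yours via $\binom{n-2k}{k}=\frac{n-2k}{k}\binom{n-2k-1}{k-1}$.
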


\begin{proof}[Proof of Lemma~\ref{thm:reduction}]
By the coloring described in the introduction, 
it remains to prove the lower bound $\chi(\kgstable{n}{k}{3}) \ge n-3k +3$.     Assume for   contradiction   that $\chi(\kgstable{n}{k}{3}) \leq n-3k+2$, and let $c:V(\kgstable{n}{k}{3}) \to [n-3k+2]$ be a proper coloring of $\kgstable{n}{k}{3}$. 
We consider two cases.
    
    \textbf{Case 1:} There exists a color class $\cC$ of $c$ that is a star. That is, there exists $j \in [n]$  such that $j \in \bigcap_{X \in \cC} X$. Note that up to rotating the vertices, we may assume that $j = n$. Let $$\cF' = \binom{[n]}{k}_{3\text{-stable}}\setminus\cC.$$ Consider the $k$-vector $\vec{s} = (3, \dots, 3,2)$. Observe that $\bstable{n-1}{k}{\vec{s}} \subset \cF'. $ Indeed, if $A$ is an $\vec{s}$-stable $k$-set contained in $[n-1]$ then $A$ is a $3$-stable $k$-set in $[n],$ and since $n \notin A$, we have $A \not\in \cC$.

Therefore, $\kgstable{n-1}{k}{\vec{s}} \subseteq \text{KG}(\cF').$ By the negation assumption $\chi(\text{KG}(\cF')) \le n-3k+1$, and thus we have $$\chi(\kgstable{n-1}{k}{\vec{s}}) \le  n-3k+1 = (n-1)-3k+2.$$ However, by Theorem~\ref{cor:almost-3stable}, we have $\chi(\kgstable{n-1}{k}{\vec{s}}) = (n-1)-3k+3$, a contradiction.

\textbf{Case 2.} Every color class of $c$ is not a star. Then if  $C$ is a color class,  by the assumption of the lemma we have $$|C| < \frac{n}{k(n-3k+2)}\binom{n-2k-1}{k-1}.$$
 By our negation assumption, this implies that the number of $3$-stable $k$-sets in $[n]$ is 
 \begin{align*}
 \left| \bstable{n}{k}{3} \right|< (n-3k+2)\cdot\frac{n}{k(n-3k+2)} \cdot \binom{n-2k-1}{k-1}=\frac{n}{k}\binom{n-2k-1}{k-1}, 
\end{align*}
contradicting Proposition \ref{prop:3-stable-vertex-set}.
\end{proof}

\section{Proof of Theorems \ref{thm:main:k>=4} and \ref{thm:k=3} }
\label{sec:k=3}

\begin{proof}[Proof of Theorem~\ref{thm:k=3}]
    Suppose first that $n \ge 15$.  By Lemma  \ref{thm:reduction}, it is enough to show that every non-star intersecting family $\cF \subset \bstable{n}{3}{3}$ satisfies
    $$|\mathcal{F}| < \frac{n}{3(n-7)}\binom{n-7}{2}.$$ 
However, by Theorem~\ref{thm:stable-HM} we have  $$|\cF| \le 3n-28< \frac{n}{3(n-7)}\binom{n-7}{2},$$ where the second inequality is true for all $n\ge 15$. 

We were able to check the remaining cases, that is, $9 \leq n \leq 14$, by converting the problem into a satisfiability problem and solving by computer, confirming that for all $n \geq 9$,
    $\chi(\kgstable{n}{3}{3}) = n - 6.$ See the Appendix 2 for more details. 
\end{proof}

Before proving Theorem~\ref{thm:main:k>=4}, we need one more lemma. The proof is in Appendix 1. 

\begin{restatable}{lemma}{threeone}
    \label{lem:three>one}
    Let $k \geq 4$ and $n \geq k^3+3k^2$. Then, we have
    $$\frac{n}{k(n-3k+2)}\binom{n-2k-1}{k-1} > \binom{n-2k-2}{k-2}+ \binom{n-2}{k-1} - \binom{n-k-1}{k-1}+1.$$
\end{restatable}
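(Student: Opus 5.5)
Write $N=n-2k-1$, so $n = N+2k+1$. The plan is to estimate both sides as polynomials in $n$ of degree $k-1$ and compare their leading coefficients. Because $n\ge k^3+3k^2$ is much larger than $k$, the gap between those coefficients should absorb every lower-order term.

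\textbf{Left side.} Since $\frac{n}{n-3k+2}>1$, the left side is at least $\frac1k\binom{n-2k-1}{k-1}$. Keeping the factor $\frac{n}{n-3k+2}=1+\frac{3k-2}{n-3k+2}$ is unnecessary.

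\textbf{Right side.} I bound each piece in terms of $\binom{n-2k-1}{k-1}$.

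The first term is small:
\[
\binom{n-2k-2}{k-2}=\frac{k-1}{n-2k-1}\binom{n-2k-1}{k-1}\le \frac{k-1}{n-2k-1}\binom{n-2k-1}{k-1}.
\]

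For the difference $\binom{n-2}{k-1}-\binom{n-k-1}{k-1}$, I use the telescoping identity
\[
\binom{n-2}{k-1}-\binom{n-k-1}{k-1}=\sum_{j=n-k-1}^{n-3}\binom{j}{k-2}\le (k-1)\binom{n-3}{k-2}.
\]
Then I compare $\binom{n-3}{k-2}$ with $\binom{n-2k-1}{k-1}$:
\[
\binom{n-3}{k-2}=\binom{n-2k-1}{k-1}\cdot\frac{k-1}{n-k-1}\cdot\prod_{i=0}^{k-3}\frac{n-3-i}{n-2k-1-i}.
\]
Each factor in the product is at most $1+\frac{2k-2}{n-3k}$. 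Hence the product is at most
\[
\exp\!\Big(\frac{(k-2)(2k-2)}{n-3k}\Big)\le e^{2k^2/(k^3)}\le e^{1/2}
\]
for $k\ge 4$. This gives
\[
\binom{n-2}{k-1}-\binom{n-k-1}{k-1}\le \frac{1.65\,(k-1)^2}{n-k-1}\binom{n-2k-1}{k-1}.
\]

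\textbf{Comparison.} After dividing by $\binom{n-2k-1}{k-1}$, it suffices to show
\[
\frac1k > \frac{k-1}{n-2k-1}+\frac{1.65(k-1)^2}{n-k-1}+\binom{n-2k-1}{k-1}^{-1}.
\]
With $n\ge k^3+3k^2$:
\begin{itemize}
\item the middle term is about $1.65/k$;
\item the first term is at most about $1/k^2$;
\item the last term is tiny.
\end{itemize}

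So the crude bound gives roughly $1.65/k$ on the right, which fails against $1/k$. This is the main obstacle: the estimate of the telescoping sum is too lossy.

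\textbf{Fixing the main obstacle.} I will tighten the comparison by pairing terms exactly rather than through the exponential. Each summand $\binom{j}{k-2}$ with $j\le n-3$ satisfies
\[
\binom{j}{k-2}\le\binom{n-3}{k-2}.
\]
I then compare $(k-1)\binom{n-3}{k-2}$ with $\frac1k\binom{n-2k-1}{k-1}$ using the exact ratio
\[
\frac{k(k-1)^2}{n-k-1}\cdot\prod_{i=0}^{k-3}\frac{n-3-i}{n-2k-1-i}.
\]
Here $k(k-1)^2/(n-k-1)\le (k^3-2k^2+k)/(k^3+3k^2-k-1)$, which is about $1-5/k$. The product is at most about $1+2k^2/n\le 1+2/k$. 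Their product is roughly $1-3/k$, which is less than $1$ with room $\approx 3/k$. That room comfortably covers the first term $\frac{k-1}{n-2k-1}\cdot k\approx 1/k$ and the $+1$ term.

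So the real work is a careful but routine verification of
\[
\Big(1-\tfrac{5}{k}+O(k^{-2})\Big)\Big(1+\tfrac{2}{k}+O(k^{-2})\Big)+\tfrac{1}{k}+o(1/k)<1
\]
for all $k\ge4$. I would check this with explicit inequalities such as $\prod(1+x_i)\le 1/(1-\sum x_i)$, and verify $k=4,5$ numerically if the asymptotic slack is tight there. The point needing the most care is that the constant in $n\ge k^3+3k^2$ (specifically the $3k^2$) is exactly what makes this leading-order comparison go through.
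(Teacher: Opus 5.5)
Your approach is sound and genuinely different from the paper's, but two points need repair.

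\textbf{The ratio identity.} Your displayed identity for $\binom{n-3}{k-2}$ has the wrong leftover factor. The denominator product $\prod_{i=0}^{k-3}(n-2k-1-i)$ stops at $n-3k+2$, so the uncancelled factor of $\binom{n-2k-1}{k-1}$ is $n-3k+1$. The correct identity is
\[
\binom{n-3}{k-2}=\binom{n-2k-1}{k-1}\cdot\frac{k-1}{n-3k+1}\cdot\prod_{i=0}^{k-3}\frac{n-3-i}{n-2k-1-i}.
\]
This only shifts things at relative order $1/k^2$.

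\textbf{The final step.} Your closing argument is an asymptotic expansion with unspecified $O(k^{-2})$ terms, so by itself it does not cover every $k\ge 4$. It does close cleanly, with no numerical cases. Put $x_i=\frac{2k-2}{n-2k-1-i}$. Using $n\ge k^3+3k^2$ you get $\sum_i x_i\le\frac{(k-2)(2k-2)}{n-3k+2}\le\frac{2}{k+2}$, so $\prod_i(1+x_i)\le(1-\sum_i x_i)^{-1}\le\frac{k+2}{k}$. Now multiply your three right-hand terms by $k$, divide by $\binom{n-2k-1}{k-1}$, and use $\binom{n-2k-1}{k-1}\ge n-2k-1$. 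This gives at most
\[
\frac{k(k-1)}{n-2k-1}+\frac{(k-1)^2(k+2)}{n-3k+1}+\frac{k}{n-2k-1}\le\frac{k^3+k^2-3k+2}{n-3k+1}<1 ,
\]
which is exactly what you need. Your first attempt failed only because you bounded the product by the uniform $e^{1/2}$ instead of $e^{2/k}$.

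\textbf{Comparison with the paper.} The paper measures both sides against the common scale $\frac{n^{k-1}}{k!}$.
\begin{itemize}
\item On the left, it rewrites the expression as $\frac{n}{(n-2k)(n-3k+2)}\binom{n-2k}{k}$, bounds the falling factorial below by powers of $n-3k+1$, and applies Bernoulli to get $\frac{n^{k-1}}{k!}\bigl(1-\frac{3k^2}{n}\bigr)$.
\item On the right, it uses the same telescoping identity you do, but bounds every binomial $\binom{a}{b}$ by $\frac{a^b}{b!}$, getting $(k+1)\frac{n^{k-2}}{(k-2)!}\le\frac{n^{k-1}}{k!}\cdot\frac{k^3}{n}$.
\end{itemize}
The hypothesis $n\ge k^3+3k^2$ is then literally the condition $1-\frac{3k^2}{n}\ge\frac{k^3}{n}$, and no product estimate is needed.

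Your normalization by $\binom{n-2k-1}{k-1}$ costs you the product estimate, but it is sharper and leaves slack of order $2/k$. The chain above already works for $n\ge k^3+2k^2$. So your closing remark that the $3k^2$ is exactly what makes the comparison go through is true of the paper's argument, not of yours.
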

We are now ready to prove Theorem~\ref{thm:main:k>=4}.
\begin{proof}[Proof of Theorem~\ref{thm:main:k>=4}]
    Let $k \geq 4$ and  $n \geq k^3 + 3k^2$. Set
    $$c(n,k,3):= \binom{n-2k-2}{k-2}+ \binom{n-2}{k-1} - \binom{n-k-1}{k-1}+1.$$
By Theorem~\ref{thm:tau-atleast-two}, any non-star intersecting family $\cF \subseteq \bstable{n}{k}{3}$ has size at most $c(n,k,3) < \frac{n}{k(n-3k+2)}\binom{n-2k-1}{k-1} $, where the inequality follows from 
Lemma~\ref{lem:three>one}. 

Now, applying Lemma~\ref{thm:reduction}, we obtain 
    $\chi(\kgstable{n}{k}{3}) = n - 3k + 3.$ 
\end{proof}



\section{A topological approach}
\label{sec:top}

The proofs in the previous sections are primarily combinatorial, relying on extremal set theory. Nevertheless, topology still plays an essential role through Theorem \ref{cor:almost-3stable}, whose proof ultimately relies on embeddings of neighborhood complexes. This naturally raises the question of whether Meunier's conjecture admits a more direct topological proof, in the spirit of Lov\'asz's proof of Kneser's conjecture and Schrijver's proof for 2-stable Kneser graphs. Although we were unable to develop such a proof in full generality, the discussion below offers a different topological perspective on stable Kneser graphs. In particular, it provides a new proof of the conjecture in the case where $k=2$ and $s-1|n$. While the method does not currently appear to extend to arbitrary values of $n$ and $k$, we believe that the topological viewpoint is of independent interest and may prove useful in future investigations.

Given $\cF \subset 2^{[n]}$, define a simplicial complex $\sfK(\cF):=\{A \subset [n]\ |\ B \not\subseteq A \ \forall B \in \cF\}.$ We say that a simplicial complex $\sfK$ {\em embeds} into $\R^d$ if there is a continuous map $f:\|\sfK(\cF)\| \to \R^d$ such that $f(\sigma_A) \cap f(\sigma_B) = \emptyset$ for every disjoint faces $\sigma_A,\sigma_B$ of $\sfK(\cF).$ The function $f$ is called an {\em embedding}.

We will use the following consequence of Sarkaria's embedding theorem.

\begin{theorem}[\cite{Matouvsek-03,Sarkaria-90, Sarkaria-91}]
\label{thm:sarkaria's embedding}
Let $\cF \subseteq 2^{[n]}$. If $\sfK(\cF)$ embeds into $\R^d$, then $\chi(\KG(\cF)) \geq n-d-1.$     
\end{theorem}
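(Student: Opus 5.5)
The plan is to reprove this in the standard way, through Sarkaria's deleted-join argument as in Matou\v{s}ek's book, and reduce it to the Borsuk--Ulam theorem. Write $L=\sfK(\cF)$ and $\sigma=\sigma^{n-1}$ for the full simplex on $[n]$. The $\mathbb{Z}_2$-space we map out of is the deleted join $\sigma^{*2}_\Delta$. Its points are formal combinations $(1-t)x\oplus ty$ with $x\in\sigma_A$, $y\in\sigma_B$ and $A\cap B=\emptyset$. It is homeomorphic, $\mathbb{Z}_2$-equivariantly, to the boundary of the $n$-dimensional cross-polytope, hence to $S^{n-1}$ with the antipodal action.

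Suppose $\chi(\KG(\cF))=m$, and fix a proper colouring $c:\cF\to[m]$. We construct a continuous $\mathbb{Z}_2$-map
$$\Phi:\sigma^{*2}_\Delta\to S^{d}*S^{m-1}\cong S^{d+m}.$$
Borsuk--Ulam then gives $n-1\le d+m$, that is, $m\ge n-d-1$. The map $\Phi$ is assembled from two ingredients.

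\emph{Embedding ingredient.} Let $f:\|L\|\to\R^d$ be the embedding. For a point $(1-t)x\oplus ty$ with both $A,B\in L$, send it to the point of $(\R\times\R^d)\setminus\{0\}$ given by $\bigl(1-2t,\,(1-t)f(x)-tf(y)\bigr)$, or any of the usual variants. This vector is nonzero: the first coordinate vanishes only at $t=\tfrac12$, and then $f(x)\neq f(y)$ because $\sigma_A,\sigma_B$ are disjoint faces of $L$. After normalising, this is a $\mathbb{Z}_2$-map $L^{*2}_\Delta\to S^d$.

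\emph{Colouring ingredient.} For disjoint $A,B$, define $g(A,B)\in\R^m$ by
$$g_i(A,B)=\mathbf{1}[\exists F\in\cF,\ F\subseteq A,\ c(F)=i]-\mathbf{1}[\exists F\in\cF,\ F\subseteq B,\ c(F)=i].$$
This is antisymmetric under swapping $A$ and $B$. It is nonzero whenever $A\notin L$ or $B\notin L$. Indeed, if some $F\subseteq A$ and some $F'\subseteq B$ had the same colour, then $F\cap F'=\emptyset$ would make $F,F'$ adjacent in $\KG(\cF)$ with equal colours, which is impossible. So $g$ is a $\mathbb{Z}_2$-map to $S^{m-1}$ on the part of $\sigma^{*2}_\Delta$ where some side is not a face of $L$.

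The main obstacle is continuity. The function $g$ depends on the supports $A,B$, which jump at the boundaries of faces. Likewise the embedding ingredient is only defined on $L^{*2}_\Delta$, while the colouring ingredient is only defined off it. I would handle this in the standard way. First pass to the first barycentric subdivision of $\sigma^{*2}_\Delta$, whose vertices are pairs $(A,\emptyset)$ and $(\emptyset,B)$ with $A,B\neq\emptyset$, and define $g$ at those vertices. Then extend $g$ linearly on simplices, i.e.\ on chains of pairs; this stays nonzero because $g$ is monotone along chains and all the vectors involved lie in a common closed orthant. Second, glue the two ingredients with a $\mathbb{Z}_2$-invariant partition of unity $\lambda,1-\lambda$. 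Here $\lambda$ vanishes exactly on $L^{*2}_\Delta$, for instance $\lambda$ equal to the distance to $\|L^{*2}_\Delta\|$, truncated. The glued map sends a point $p$ to the join point $(1-\lambda(p))\,\hat f(p)\oplus\lambda(p)\,\hat g(p)\in S^d*S^{m-1}$.

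One point needs care: in the join coordinate, $\hat f$ must be defined wherever $\lambda<1$. I would arrange this by a small regular neighbourhood of $\|L^{*2}_\Delta\|$ that deformation-retracts equivariantly onto it, and compose $\hat f$ with that retraction. This is exactly the step where I expect to spend the real effort. Once it is settled, $\Phi$ is a continuous $\mathbb{Z}_2$-map $S^{n-1}\to S^{d+m}$, and the dimension count above finishes the proof.
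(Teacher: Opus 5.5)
Your proof is correct. Note that the paper gives no proof of this statement. It imports it from Matou\v{s}ek's book, where it is Sarkaria's inequality $\chi(\KG(\cF))\ge n-\mathrm{ind}(\sfK(\cF)^{*2}_\Delta)-1$ combined with $\mathrm{ind}(\sfK^{*2}_\Delta)\le d$. Your embedding ingredient is exactly the proof of the latter bound. It only uses $f(\sigma_A)\cap f(\sigma_B)=\emptyset$ for disjoint faces, which is precisely the paper's notion of embedding.

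Your colouring ingredient and orthant argument are also right. On each simplex of the subdivision, the positive part of $g$ lives on the colours of the largest $A$ and the negative part on the colours of the largest $B$. These colour sets are disjoint, so $g$ vanishes exactly on $\|L^{*2}_\Delta\|$. One small misnomer: the complex with vertices $(A,\emptyset)$ and $(\emptyset,B)$ is not the barycentric subdivision of $\sigma^{*2}_\Delta$, whose vertices are all pairs $(A,B)$. It is the join of the subdivisions of the two sides. Both are subdivisions, and your argument works on either.

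Where you genuinely differ from the standard proof is the gluing. The usual proof of Sarkaria's inequality glues combinatorially. Take a chain $(A_1,B_1)\subsetneq\dots\subsetneq(A_p,B_p)$ in $\mathrm{sd}(\sigma^{*2}_\Delta)$. The pairs with $A_i,B_i\in L$ form an initial segment, because $L$ is closed under subsets. Send those vertices to themselves, and send the remaining ones to $(\Gamma(A_i),\Gamma(B_i))$, where $\Gamma(A)=\{c(F):F\in\cF,\ F\subseteq A\}$. This is a simplicial $\mathbb{Z}_2$-map into $\mathrm{sd}(L^{*2}_\Delta)*\mathrm{sd}((\sigma^{m-1})^{*2}_\Delta)$, with no neighbourhoods, retractions or partitions of unity needed.

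Within your own framework, the step you flag as the real effort can be bypassed too. Tietze-extend the unnormalised map $h$ to some $H$ on all of $S^{n-1}$. Antisymmetrise to $\tilde h(p)=\tfrac12(H(p)-H(-p))$, which still equals $h$ on $\|L^{*2}_\Delta\|$. Then use $p\mapsto(\tilde h(p),g(p))\in\R^{d+1}\times\R^{m}$. This map is nowhere zero, because $\tilde h\neq0$ on $\|L^{*2}_\Delta\|$ and $g\neq0$ off it. After normalising, it is directly the odd map $S^{n-1}\to S^{d+m}$ you need.
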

Therefore, to show that $\chi(\kgstable{n}{2}{s}) \geq n-s$, it suffices to find an embedding  of $\sfK(\bstable{n}{2}{s})$ in $\R^{s-1}$. 
We first give a description of the simplicial complex $\sfK(\bstable{n}{2}{s}).$
\begin{lemma}
\label{lem: description-of-K}
    Let $\sfK = \sfK(\bstable{n}{2}{s})$ with $n \geq 3s-2$ and $s \geq 3$. If $A \in \sfK$ is a facet, then $A$ is an interval on size $s$ in the cycle on vertex set $[n]$, that is, $A$ has the form $\{i,i+1,\dots,i+s-1\},$ where the addition is modulo $n$. In particular, every facets of $\sfK$ has size $s$.
\end{lemma}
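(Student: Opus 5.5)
The plan is to characterize the faces of $\sfK=\sfK(\bstable{n}{2}{s})$ directly. By definition, $A\in\sfK$ if and only if $A$ contains no $s$-stable pair. That is, $d(i,j)\le s-1$ for all $i,j\in A$. So the statement reduces to a claim about the cycle $C_n$: if $n\ge 3s-2$, every set $A\subseteq[n]$ whose elements are pairwise at cyclic distance at most $s-1$ is contained in an interval of $s$ consecutive elements. Conversely, any interval $\{i,\dots,i+s-1\}$ has pairwise distances at most $s-1$, so it is a face. Also, any set of size $s+1$ that is not contained in such an interval is not a face, by the claim, since an interval of length $s$ cannot contain $s+1$ elements. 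Hence the maximal faces are exactly the length-$s$ intervals, all of size $s$.

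The key step is to prove the claim. I would use the gaps around the cycle. List the elements of $A$ cyclically as $a_1<a_2<\dots<a_m$. Let $g_1,\dots,g_m$ be the cyclic gaps, where $g_t=a_{t+1}-a_t$ for $t<m$ and $g_m=n+a_1-a_m$, so that $\sum g_t=n$. Let $G$ be the largest gap. If $G\ge n-(s-1)$, then $A$ lies in an arc of length $n-G\le s-1$, that is, within $s$ consecutive elements, and we are done.

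Otherwise every gap is at most $n-s$. I would then derive a contradiction from the pairwise distance condition. Start at $a_1$ and take the largest index $t$ with $a_t-a_1\le s-1$ along the forward arc. The next element $a_{t+1}$ must still be within distance $s-1$ of $a_1$, so it is reached going backward, which means $n-(a_{t+1}-a_1)\le s-1$. Then the arc from $a_1$ to $a_t$ has length at most $s-1$, the arc from $a_{t+1}$ back to $a_1$ has length at most $s-1$, and the gap $a_{t+1}-a_t\le s-1$, because these two points are also within distance $s-1$ and the short way between them is not around the rest. Summing these three arcs gives $n\le 3(s-1)=3s-3$, which contradicts $n\ge 3s-2$. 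The degenerate cases, where no element lies beyond $a_t$ or $m$ is small, fall under the first case.

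I expect the main obstacle to be making this three-arc covering argument rigorous. In particular, I need to justify that the distance between $a_t$ and $a_{t+1}$ is realized by the direct gap rather than the complementary arc. If it were realized the other way, the complementary arc, which contains $a_1$, would have length at most $s-1$. Then $A$ would sit inside an arc of length at most $s-1$, which puts us back in the first case. The hypothesis $s\ge3$ appears only to ensure that the setup is non-degenerate.
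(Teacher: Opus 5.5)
Your proof is correct and follows essentially the same route as the paper. Both show that every face lies in an $s$-interval: otherwise the cycle would be covered by three arcs of length at most $s-1$, forcing $n\le 3s-3$. The facets are then read off from this. The paper needs less bookkeeping because it rotates one element $b$ of the face to position $s$, so the whole face sits in the linear window $[1,2s-1]$ and only the distance between its minimum and maximum has to be checked. This avoids your largest-gap case split and your separate argument that $d(a_t,a_{t+1})$ is realized by the direct gap.
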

\begin{proof}
    Let $B \in \sfK$ be a face. We show that there is an interval $\{i,i+1,\dots,i+s-1\}$ of size $s$ containing $B.$ Let $b \in B$. By rotating, we may assume $b =s .$ Since $B$ does not contain any $s$-stable set of size $2$, $B \subset [1,2s-1].$ Let $b_1 \in B$ be the smallest element and $b_2 \in B$ be the largest element. Since $d(b_1,b_2) \leq s-1,$ we either have $b_2 - b_1 \leq s-1$ or $n-(b_2-b_1) \leq s-1.$ If the latter case holds, then $$n \leq s-1 + b_2 - b_1 \leq s-1+2s-2 = 3s-3,$$ a contradiction. Thus, $b_2 - b_1 \leq s-1.$ Then $B$ is contained in the $s$-interval $[b_1, b_1+s-1].$
    
    Since an $s$-interval does not contain any $s$-stable set of size $2$ and adding any element to an $s$-interval creates an $s$-stable set of size $2$, facets of $\sfK$ are exactly $s$-intervals.    
\end{proof}

To embed $\sfK:=\sfK(\bstable{n}{2}{s})$ into $\R^{s-1},$ we first define an auxiliary simplicial complex $\tilde{\sfK}$ whose vertex set is $[n+s-1]$ and facets are $\{i,i+1,\dots,i+s-1\}$ for $1 \leq i \leq n$. For every $i\in [n+s-1]$ let $u_i$ be the vertex of $\|\tilde\sfK\|$ corresponding to the $i$-th vertex of $\tilde\sfK$. We will first show that $\|\tilde{\sfK}\|$ embeds into $\R^{s-1}$. 
Later we will identify each vertex $\{n+j\}$ with $\{j\}$ for $1 \leq j \leq s-1$ in $\tilde{\sfK}$  and obtain an embedding of $\|\sfK\|$ into $\R^{s-1}.$ See Figure \ref{fig:topological-approach}.

     \begin{center}
\begin{figure}[ht]
    \begin{subfigure}[b]{0.48 \textwidth}
        \begin{center}
\begin{tikzpicture}[scale=0.85]
\def\labelsize{3pt}

\foreach \k in {0,...,6}{
    \pgfmathtruncatemacro{\odd}{2*\k+1}
    \pgfmathtruncatemacro{\even}{2*\k+2}
    \coordinate (V\odd) at (0,\k);
    \coordinate (V\even) at (1,\k);
}

\draw[->] (0,0) -- (1,0) node[right] {};
\draw[->] (0,0) -- (0,6) node[above] {};

\draw[thick] (1,0) -- (1,6) node[above] {};

\foreach \i in {1,...,12}{
    \pgfmathtruncatemacro{\j}{\i+1}
    \pgfmathtruncatemacro{\k}{\i+2}
    \fill[blue!20] (V\i) -- (V\j) -- (V\k) -- cycle;
    \draw (V\i) -- (V\j) -- (V\k) -- cycle;
}

\draw[thick] (V1)--(V3)--(V5)--(V7)--(V9)--(V11)--(V13);
\draw[thick] (V2)--(V4)--(V6)--(V8)--(V10)--(V12)--(V14);

\foreach \i in {1,...,14}{
    \node[circle, fill=black, text=white,
          minimum size=10pt, font=\scriptsize, inner sep=0pt] at (V\i) {\i};
}

\end{tikzpicture}
\end{center}
    \end{subfigure}
    \begin{subfigure}[b]{0.48 \textwidth}
                \begin{center}
\begin{tikzpicture}[scale=0.8]
\def\rone{1}
\def\rtwo{2}
\def\labelsize{3pt} 

\foreach \i in {1,...,6}{
    \pgfmathsetmacro{\angle}{60+60*(\i-1)}
    \coordinate (M1-\i) at (\angle:\rone);
    \coordinate (M2-\i) at (\angle:\rtwo);
}

\foreach \i in {1,...,6}{
    \pgfmathtruncatemacro{\odd}{2*\i-1}
    \pgfmathtruncatemacro{\even}{2*\i}
    \coordinate (V\odd) at (M1-\i);
    \coordinate (V\even) at (M2-\i);
}

\foreach \i in {1,...,12}{
    \pgfmathtruncatemacro{\j}{mod(\i,12)+1}
    \pgfmathtruncatemacro{\k}{mod(\i+1,12)+1}
    \fill[blue!20] (V\i) -- (V\j) -- (V\k) -- cycle;
    \draw (V\i) -- (V\j) -- (V\k) -- cycle;
}

\draw[thick] (M1-1)--(M1-2)--(M1-3)--(M1-4)--(M1-5)--(M1-6)--cycle;
\draw[thick] (M2-1)--(M2-2)--(M2-3)--(M2-4)--(M2-5)--(M2-6)--cycle;

\foreach \i in {1,...,12}{
    \node[circle, fill=black, text=white, minimum size=10pt, font=\scriptsize, inner sep=0pt] at (V\i) {\i};
}

\end{tikzpicture}
\end{center}
    \end{subfigure}
    \caption{The left figure illustrates $\|\tilde \sfK\|$ when $n = 12$ and $s=3$, embedded into $\R^2$. After identifying $1\sim 13$ and $2 \sim 14$, we get the complex $\|\sfK\|$ embedded into $\R^2$ in the right figure.}
    \label{fig:topological-approach}
\end{figure}
\end{center}
Write $\overline{j}=j\mod (s-1)$. Define a map $f_1: V(\|\tilde{\sfK}\|) \to \R^{s-1}$ by $f_1(u_i) = e_{\overline{i-1}}+ \lfloor \frac{i-1}{s-1}\rfloor e_{s-1},$ where $e_0$ is the origin of $\R^{s-1}$ and $e_1,\dots,e_{s-1}$ are the standard basis vectors in $\R^{s-1}$. 
Note that $f_1$ maps $V(\|\tilde{\sfK}\|)$ into the $s-1$ vertical lines passing through $e_0,e_1, \dots, e_{s-2}$.
Let $f:\|\tilde{\sfK}\| \to \R^{s-1}$ be the linear extension of $f_1.$ 
Clearly, $f$ is continuous.

\begin{lemma}
\label{lem: tildeK is homeomorphic to}
    Fix $n \geq 3s-2$, $s \geq 3$, and let  $\tilde{\sfK}$ and $f$ be as above. Then $f$ is an embedding of $\tilde \sfK$ into $\R^{s-1}$. Moreover, when $s-1| n$, $f$ is a homeomorphism $\|\tilde \sfK\| \cong \conv\{e_{0},\dots,e_{s-2}\} \times [0, \frac{n}{s-1}].$
\end{lemma}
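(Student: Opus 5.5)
The plan is to use a layer (prism) structure. Put $m=s-1$ and $\Delta=\conv\{e_0,\dots,e_{s-2}\}$, an $(s-2)$-simplex lying in the hyperplane $x_{s-1}=0$. Write $u_i\mapsto p_i$ with $p_i=e_{\overline{i-1}}+\lfloor\frac{i-1}{m}\rfloor e_{s-1}$. Then $p_i$ sits above vertex $e_{\overline{i-1}}$ of $\Delta$ at height $h_i=\lfloor (i-1)/m\rfloor$. Consecutive vertices $i,\dots,i+m-1$ therefore have pairwise distinct residues, so they project bijectively onto the vertices of $\Delta$. A facet $\{i,\dots,i+s-1\}$ has $s=m+1$ vertices, and exactly two of them share a residue, namely $i$ and $i+m$, with heights $h$ and $h+1$. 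Hence each facet is mapped affinely onto the convex hull of a ``staircase'' of $m+1$ points. This is exactly a simplex of the standard staircase triangulation of the prism $\Delta\times[h,h+1]$, the one obtained by successively lifting vertices $e_{\overline{i-1}},e_{\overline{i}},\dots$ from height $h$ to $h+1$.

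First I check that the $m+1$ image points are affinely independent. The projections of $p_i,\dots,p_{i+m-1}$ are the $m$ distinct vertices of $\Delta$, so they are affinely independent in the hyperplane. The point $p_{i+m}=p_i+e_{s-1}$ lies off the affine span of the others, because those others span a non-vertical $(m-1)$-flat and moving $p_i$ vertically changes its height. So $f$ is injective on each closed simplex.

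Next I prove global injectivity. The cleanest route is to show that the images of the facets form a triangulation of their union, with simplices meeting in common faces. For this I use the staircase triangulation of $\Delta\times[0,1]$. It is described by the linear order on prism vertices $(e_0,0)<(e_1,0)<\dots<(e_{m-1},0)<(e_0,1)<\dots<(e_{m-1},1)$, and its maximal simplices are the windows of $m+1$ consecutive elements in this chain. This is the classical triangulation of a prism into $m$ simplices $\conv\{a_0,\dots,a_j,b_j,\dots,b_{m-1}\}$. Our vertex order $u_1,u_2,\dots$ is precisely this chain continued through all layers, so the images of consecutive windows $\{i,\dots,i+m\}$ are these prism simplices, layer by layer. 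Prisms at heights $[h,h+1]$ and $[h+1,h+2]$ meet in the full copy of $\Delta$ at height $h+1$. That copy is the image of the window $u_{hm+1},\dots,u_{hm+m}$, which is a common face of the adjacent simplices on both sides.

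It follows that two faces of $\tilde\sfK$ are disjoint exactly when their images are disjoint, since the simplicial complex structure is preserved. In particular $f$ is an embedding. When $m\mid n$, the vertex set $[n+m]$ gives heights $0,\dots,n/m$, and the facets $i=1,\dots,n$ cover all $n/m$ full prisms; each prism contributes $m$ windows, for $n$ facets in total. So $f$ is a bijective continuous map from the compact space $\|\tilde\sfK\|$ onto $\Delta\times[0,n/m]$, and hence a homeomorphism.

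The main obstacle is the verification that the windows tile the prism without overlap. I would handle it by barycentric sorting. A point $(x,t)$ with $t\in[h,h+1]$ and $x\in\Delta$ lies in the window starting at $e_j$'s lift exactly when $t-h$ falls between cumulative sums of the barycentric coordinates of $x$ in the cyclic order. This gives a unique containing simplex for generic points and shared faces otherwise.
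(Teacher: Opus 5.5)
Your proof is correct, but it takes a different route from the paper.

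\textbf{The paper's route.} It never identifies a triangulation. For disjoint faces $\sigma,\tau$ with $\min\sigma<\min\tau$, Claim 1 shows that on each vertical line $L_i$ every vertex of $f(\sigma)$ lies strictly below every vertex of $f(\tau)$. Claim 2 then passes a non-vertical hyperplane through points $Q_i\in L_i$ chosen in between, and this hyperplane separates $\conv f(V(\sigma))$ from $\conv f(V(\tau))$. The argument is short and delivers exactly the weak notion of embedding the paper feeds into Sarkaria's theorem: disjoint faces have disjoint images. However, it says nothing about faces that share vertices, so it does not show that $f$ is injective. The paper's homeomorphism claim for $s-1\mid n$ rests only on the observation that the image is the prism.

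\textbf{Your route.} You recognize the facet images as the staircase simplices of each prism $\Delta\times[h,h+1]$, glued along $\Delta\times\{h+1\}$. This makes $f$ an isomorphism onto a subcomplex of a genuine triangulation of a stack of prisms. That holds even when $s-1\nmid n$ and the top layer is only partly filled, since any subfamily of a triangulation still meets in common faces. Consequently $f$ is injective on all of $\|\tilde\sfK\|$. The compact-to-Hausdorff bijection argument then makes the homeomorphism statement rigorous, which is the part the paper leaves unjustified.

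\textbf{What it costs.} You must verify the staircase tiling, and your barycentric computation does this. With barycentric coordinates $\lambda_0,\dots,\lambda_{m-1}$ of $x$, the window of layer $h$ starting at $(e_r,h)$ contains $(x,t)$ exactly when $t-h\in[\lambda_0+\dots+\lambda_{r-1},\,\lambda_0+\dots+\lambda_r]$. Note that this uses the linear order $e_0,\dots,e_{m-1}$, not a cyclic one.

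\textbf{Minor slips that do not affect the argument.}
\begin{itemize}
\item The shared copy $\Delta\times\{h+1\}$ is the image of $u_{(h+1)m+1},\dots,u_{(h+2)m}$. The vertices $u_{hm+1},\dots,u_{hm+m}$ you name sit at height $h$.
\item The windows $\{(e_r,h),\dots,(e_{m-1},h),(e_0,h+1),\dots,(e_r,h+1)\}$ match the simplices $\conv\{a_0,\dots,a_j,b_j,\dots,b_{m-1}\}$ only after reversing the order of $e_0,\dots,e_{m-1}$.
\end{itemize}
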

\begin{proof}
     Let $\sigma = \{a_1,\dots,a_l\}$ and $\tau=\{b_1,\dots,b_k\}$ be two disjoint faces of $\tilde\sfK$, where $a_i,b_j \in [n+s-1]$ for all $i,j$, and $a_1< \cdots < a_l$ and $b_1 < \cdots < b_k$. Without loss of generality, assume $a_1 < b_1.$ Write $a_1-1=(s-1)q+r$ where $r = \overline{a_1-1}$ and $q = \lfloor \frac{a_1-1}{s-1}\rfloor.$ Then by our definition, $$f(a_1) = e_r + qe_{s-1}.$$ For $0\leq i \leq s-2$, define the vertical lines $L_i = \{e_i + c e_{s-1}: c \in \R \}.$ For $j \geq 1$, let $H_j$ be the hyperplane $\{x \in \R^{s-1}: x_{s-1}=j\}$, and let $P_{i,j}$  be  the intersection point of the line $L_i$ with the hyperplane $H_j$. Then by definition, $f(a_1) = P_{r,q}$. Note that the image of the vertices of  a facet of $\tilde\sfK$ lie in exactly two consecutive hyperplanes $H_{j} \cup H_{j+1}$. In particular, $f(a_1)=P_{r,q} \in H_{q}$ and 
     \begin{equation}
     \label{eq:sigma-belongs}
     f(V(\sigma)) \subset \bigcup_{j=r}^{s-2}\{P_{j,q}\} \sqcup \bigcup_{j=0}^r \{P_{j,q+1}\} \subset H_{q} \cup H_{q+1}.    
     \end{equation}
     
     Note that since $a_1<b_1$, we have $f(b) \in H_k$ with $k\ge q$.

\noindent{\bf Claim 1.}
     Let $a\in \sigma$ and $b \in \tau$, and suppose that  $f(a)\in L_i \cap H_m$ and $f(b)\in L_i \cap H_k$. We claim that $k> m$.

    Indeed, assume for contradiction $k\le m$. Since $\sigma$ and $\tau$ are disjoint, $f(a)\neq f(b)$, and thus $k<m$. By (\ref{eq:sigma-belongs}) we may consider two cases.  If $f(a) \in  \bigcup_{j=r}^{s-2}\{P_{j,q}\}$, then $m=q$, and thus $f(b) \in H_k$ for  $k \leq q-1$, implying $b < a_1$, contradicting  $a_1 < b_1$. Otherwise, $f(a) \in \bigcup_{j=0}^r \{P_{j,q+1}\} \subseteq H_{q+1}$  so  $f(b)$ must be in $H_q \cap (L_0\cup \dots \cup L_{r-1})$ . But then again, $b < a_1,$ a contradiction. See Figure \ref{fig:Hyperplanes-lines}.

    \begin{figure}[ht]
        \centering
        \begin{tikzpicture}[
    scale=1.1,
    line cap=round,
    line join=round
]

\coordinate (X1) at (1.3,0);
\coordinate (X2) at (2.3,0);
\coordinate (X3) at (3.3,0);
\coordinate (X4) at (4.3,0);
\coordinate (X5) at (5.3,0);
\coordinate (X6) at (6.3,0);
\coordinate (X7) at (7.3,0);

\def\lowerfront{0}
\def\lowerback{0.55}
\def\lowerpoint{0.275}

\def\upperfront{2.30}
\def\upperback{2.85}
\def\upperpoint{2.575}

\filldraw[
    fill=blue!15,
    fill opacity=0.12,
    draw=black,
    draw opacity=1,
    thick
]
    (0.5,\lowerfront)
    -- (8.1,\lowerfront)
    -- (9.0,\lowerback)
    -- (1.4,\lowerback)
    -- cycle;

\filldraw[
    fill=blue!15,
    fill opacity=0.12,
    draw=black,
    draw opacity=1,
    thick
]
    (0.5,\upperfront)
    -- (8.1,\upperfront)
    -- (9.0,\upperback)
    -- (1.4,\upperback)
    -- cycle;


\draw[thick] (1.3,3.60) -- (1.3,\upperpoint);
\draw[thick] (1.3,\upperfront) -- (1.3,\lowerpoint);

\draw[thick] (2.3,3.60) -- (2.3,\upperpoint);
\draw[thick] (2.3,\upperfront) -- (2.3,\lowerpoint);

\draw[thick] (3.3,3.60) -- (3.3,\upperpoint);
\draw[thick] (3.3,\upperfront) -- (3.3,\lowerpoint);

\draw[thick] (4.3,3.60) -- (4.3,\upperpoint);
\draw[thick] (4.3,\upperfront) -- (4.3,\lowerpoint);

\draw[thick] (5.3,3.60) -- (5.3,\upperpoint);
\draw[thick] (5.3,\upperfront) -- (5.3,\lowerpoint);

\draw[thick] (6.3,3.60) -- (6.3,\upperpoint);
\draw[thick] (6.3,\upperfront) -- (6.3,\lowerpoint);

\draw[thick] (7.3,3.60) -- (7.3,\upperpoint);
\draw[thick] (7.3,\upperfront) -- (7.3,\lowerpoint);


\foreach \x in {4.3,5.3,7.3}{
    \filldraw[
        fill=red,
        draw=red!60!black,
        thick
    ]
    (\x,\lowerpoint) circle (3.2pt);
}

\foreach \x in {6.3}{
    \node[
        diamond,
        fill=green!65!black,
        draw=green!35!black,
        thick,
        minimum size=8pt,
        inner sep=0pt
    ] at (\x,\lowerpoint) {};
}

\foreach \x in {1.3,3.3,4.3,6.3}{
    \node[
        diamond,
        fill=green!65!black,
        draw=green!35!black,
        thick,
        minimum size=8pt,
        inner sep=0pt
    ] at (\x,\upperpoint) {};
}
\foreach \x in {2.3,3.3}{
    \filldraw[
        fill=red,
        draw=red!60!black,
        thick
    ]
    (\x,\upperpoint) circle (3.2pt);
}

\node[above] at (4.3,3.60) {$L_r$};
\node[above] at (1.3,3.60) {$L_0$};
\node[above] at (7.3,3.60) {$L_{s-2}$};
\node[right] at (8.6,2.47) {$H_{q+1}$};
\node[right] at (8.6,0.17) {$H_q$};
\node[right] at (8.6,2.47) {$H_{q+1}$};
\node[below] at (4.3,0) {$P_{r,q}$};

\end{tikzpicture}
        \caption{The vertices of $\sigma$ and $\tau$ are mapped under $f$ to red circles and green diamonds, respectively.}
        \label{fig:Hyperplanes-lines}
    \end{figure}
    \medskip
    
   \noindent{\bf Claim 2.} There exists a hyperplane separating   $f(\sigma)$ from $f(\tau)$.

   Indeed, note that by Claim 1,  on each line $L_i$, every point from $f(V(\sigma))$ is below every point from $f(V(\tau))$.  
   For each line $L_i$ with $0 \leq i \leq s-2$, we define a point $Q_i$ to be an arbitrary point that is larger than every point of $L_i\cap f(V(\sigma))$ and smaller than every point of $L_i\cap f(V(\tau))$. Let $H \subset \R^{s-1}$ be a hyperplane spanned by $Q_1,\dots,Q_{s-1}.$ Then $H$ separates $f(\sigma)$ from $f(\tau).$ 

 



    This completes that proof that $f$ is an embedding of $\tilde\sfK$ into $\R^{s-1}$. Since the image of $f$
    when $s-1|n$ is $\conv\{e_{0},\dots,e_{s-2}\} \times [0, \frac{n}{s-1}]$, we have in that case that $f$ 
 is a  homeomorphism $\|\tilde \sfK\| \cong \conv\{e_{0},\dots,e_{s-2}\} \times [0, \frac{n}{s-1}]$, as needed.
\end{proof}

By construction, $\sfK$ is obtained from $\tilde\sfK$  by identifying $\{j\}$ with $\{n+j\}$, for $1 \leq j \leq s-1$. 
By the previous lemma we have  $$\|\tilde\sfK\|\cong  \conv\{e_{0},\dots,e_{s-2}\} \times [0, \frac{n}{s-1}] \cong \conv(e_0,\dots,e_{s-2}) \times [0,1].$$ 
Note that in this homeomorphism, for every $1 \leq j \leq s-1$, $u_j \mapsto  (e_{j-1},0) \mapsto  (e_{j-1},0)$ and  $ u_{n+j} \mapsto  (e_{j-1},\frac{n}{s-1}) \mapsto (e_{j-1},1)$. Therefore, to show that $\|\sfK\|$ can be embedded into $\R^{s-1}$, it suffices to show the quotient space $\|\tilde\sfK\|/{\sim}$ can be embedded into $\R^{s-1},$ where $(x,0) \sim (x,1)$ for $x \in \conv(e_0,\dots,e_{s-2}).$

Now observe that 
$$\|\tilde\sfK\|/{\sim} \cong  \conv(e_0,\dots,e_{s-2}) \times \mathbb{S}^1 \cong \Delta^{s-2}\times \mathbb{S}^1 \cong [0,1]^{s-2} \times \mathbb{S}^1,$$
where $\Delta^{s-2}$ is the $(s-2)$-simplex, $[0,1]^{s-2}$ is the $(s-2)$-cube, and $\mathbb{S}^1$ is $1$-sphere. Thus it suffices to show the following.
\begin{lemma}
\label{lem: embed cube x sphere}
For $s \geq 3$, $[0,1]^{s-2} \times \mathbb{S}^1$ can be embedded into $\R^{s-1}.$
\end{lemma}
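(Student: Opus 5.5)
We give an explicit embedding, namely the solid-torus construction: thicken a circle in a coordinate plane. Note that here ``embedding'' can be taken in the honest sense of an injective continuous map. Since $[0,1]^{s-2}\times\mathbb{S}^1$ is compact and $\R^{s-1}$ is Hausdorff, an injective continuous map is a homeomorphism onto its image. Disjoint faces are then certainly sent to disjoint sets, which is what the definition preceding Theorem~\ref{thm:sarkaria's embedding} requires.

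Write points of $[0,1]^{s-2}\times\mathbb{S}^1$ as $(t_1,\dots,t_{s-2},\theta)$ with $\theta\in\R/2\pi\mathbb{Z}$. Define $g:[0,1]^{s-2}\times\mathbb{S}^1\to\R^{s-1}$ by
$$g(t,\theta)=\bigl((2+t_1)\cos\theta,\ (2+t_1)\sin\theta,\ t_2,\ \dots,\ t_{s-2}\bigr).$$
When $s=3$ there are no coordinates $t_2,\dots,t_{s-2}$. In that case $g(t_1,\theta)$ is simply the polar-coordinate map onto the annulus $\{2\le |x|\le 3\}$ in $\R^2$.

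Continuity of $g$ is clear, since $\cos$ and $\sin$ are well defined on $\R/2\pi\mathbb{Z}$. The substantive step is injectivity. Suppose $g(t,\theta)=g(t',\theta')$.
\begin{itemize}
\item The last $s-3$ coordinates force $t_j=t'_j$ for $j\ge 2$.
\item The Euclidean norm of the first two coordinates equals $2+t_1$, which is positive. Hence $t_1=t'_1$.
\item Dividing the first two coordinates by this common positive radius gives $(\cos\theta,\sin\theta)=(\cos\theta',\sin\theta')$. Therefore $\theta=\theta'$ in $\mathbb{S}^1$.
\end{itemize}
So $g$ is injective, and compactness upgrades it to a homeomorphism onto its image.

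Finally, we assemble the pieces. The paper has already shown that $\|\sfK\|$, for $s-1\mid n$, is homeomorphic to $\|\tilde\sfK\|/{\sim}$, which in turn is homeomorphic to $[0,1]^{s-2}\times\mathbb{S}^1$. Composing this homeomorphism with $g$ gives an embedding of $\|\sfK\|$ into $\R^{s-1}$. Theorem~\ref{thm:sarkaria's embedding} then yields $\chi\ge n-s$ in the case $k=2$, $s-1\mid n$.

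The only point that needs care is the first one: confirming that the notion of embedding used here is implied by injectivity, which follows because disjoint faces have disjoint preimages. Beyond that there is no real obstacle.
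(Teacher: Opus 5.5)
Your proof is correct and is essentially the paper's argument in closed form. The paper inducts on $s$: the annulus embeds in $\R^2$ when $s=3$, and each later step takes the product with $[0,1]$, giving $[0,1]\times\R^{s-2}\hookrightarrow\R^{s-1}$; unrolled, this is exactly your map $g$, with an annulus in the first two coordinates and the identity on the remaining cube coordinates.
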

\begin{proof}
    We proceed by induction. When $s=3,$ $[0,1] \times \mathbb{S}^1$ is homeomorphic to an annulus, which can be embedded into $\R^2.$ Assume $s \geq 4$ and the statement holds for smaller $s.$ Since $[0,1]^{s-3} \times \bS^1$ can be embedded into $\R^{s-2}$, $[0,1]^{s-2} \times \bS^1$ can be embedded into $[0,1] \times \R^{s-2} \hookrightarrow \R^{s-1},$ as desired.
\end{proof}

By Theorem \ref{thm:sarkaria's embedding}, we obtain a topological proof to the following statement.

\begin{theorem}
    Let $n \geq 3s-2$, $s \geq 3$, and $s-1 | n.$ Then $\chi(\kgstable{n}{2}{s}) = n-s.$
\end{theorem}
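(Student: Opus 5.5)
The upper bound $\chi(\kgstable{n}{2}{s}) \le n-s$ is the coloring from the introduction, $c(A)=\min\{\min(A), n-2s+s\}$, since $n - sk + s = n-s$ when $k=2$. So only the lower bound $\chi(\kgstable{n}{2}{s}) \ge n-s$ needs proof. By Theorem~\ref{thm:sarkaria's embedding} applied with $\cF=\bstable{n}{2}{s}$, it suffices to embed $\sfK=\sfK(\bstable{n}{2}{s})$ into $\R^{d}$ with $n-d-1=n-s$, that is, $d=s-1$.

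I will build the embedding from the results just established. By Lemma~\ref{lem: description-of-K}, valid because $n\ge 3s-2$ and $s\ge 3$, the facets of $\sfK$ are exactly the cyclic $s$-intervals $\{i,\dots,i+s-1\}$ taken mod $n$. Hence $\sfK$ is the quotient of $\tilde\sfK$ obtained by identifying $u_{n+j}$ with $u_j$ for $1\le j\le s-1$. One check is needed here: no new simplices may collapse. Distinct vertices of a facet of $\tilde\sfK$ must stay distinct after the identification, which holds since $n\ge s$. The quotient must also not glue faces beyond the intended identification. This holds because two faces of $\tilde\sfK$ with the same image under the vertex identification already coincide as cyclic intervals, using $n\ge 3s-2>2(s-1)$.

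By Lemma~\ref{lem: tildeK is homeomorphic to}, with $s-1\mid n$, $\|\tilde\sfK\|$ is homeomorphic via $f$ to the prism $\conv\{e_0,\dots,e_{s-2}\}\times[0,\frac{n}{s-1}]$. Under this map the vertex $u_j$ goes to $(e_{j-1},0)$ and $u_{n+j}$ goes to $(e_{j-1},\frac{n}{s-1})$. Both the bottom face and the top face are the simplex spanned by the images of these $s-1$ vertices, triangulated as a single simplex. Since $f$ is linear on simplices, identifying $u_j$ with $u_{n+j}$ realizes exactly the gluing $(x,0)\sim(x,\frac{n}{s-1})$. Consequently $\|\sfK\|\cong \Delta^{s-2}\times\bS^1\cong[0,1]^{s-2}\times\bS^1$. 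Lemma~\ref{lem: embed cube x sphere} then embeds this space into $\R^{s-1}$.

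The remaining gap is definitional. The paper's notion of embedding only requires disjoint faces to have disjoint images, and a topological embedding, being injective, satisfies this. Composing the homeomorphism with the embedding of Lemma~\ref{lem: embed cube x sphere} therefore gives the required map. Theorem~\ref{thm:sarkaria's embedding} then yields $\chi\ge n-(s-1)-1=n-s$.

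The main obstacle is the identification step, namely verifying that $\|\sfK\|$ really is the quotient $\|\tilde\sfK\|/{\sim}$. This means showing that the simplicial quotient of $\tilde\sfK$ is a genuine simplicial complex isomorphic to $\sfK$, and that the gluing is compatible with $f$ on the end faces. The divisibility $s-1\mid n$ is used exactly here: it makes the top vertices $u_{n+1},\dots,u_{n+s-1}$ land on the lines $L_0,\dots,L_{s-2}$ in the same order as $u_1,\dots,u_{s-1}$, so that the identification is a translation of the prism's end face.
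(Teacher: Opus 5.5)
Your proposal is correct and follows the paper's own argument. You reduce via Sarkaria's theorem to embedding $\sfK(\bstable{n}{2}{s})$ in $\R^{s-1}$, realize $\|\sfK\|$ as the prism $\|\tilde\sfK\|$ with its two end simplices glued by a translation (where $s-1\mid n$ enters), and embed the resulting $\Delta^{s-2}\times\bS^1$ into $\R^{s-1}$. Your extra checks, that the vertex identification yields exactly $\sfK$ with no further gluing and that a topological embedding satisfies the paper's disjoint-faces notion, are correct and make explicit what the paper leaves implicit.
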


\bibliography{ref.bib}
\bibliographystyle{plain}

\appendix
\section*{Appendix 1: Proofs}
\begin{proof}[Proof of Lemma \ref{lem:tau-three-less-than}]
    By Pascal's inequality, 
    $$\binom{n-2}{k-1} - \binom{n-k-1}{k-1} = \sum_{j = 1}^{k-1} \binom{n-2-j}{k-2}.$$
For $1 \leq j \leq k - 1$, let 
    $R_j = \frac{\binom{n-2-j}{k-2}}{\binom{n-3}{k-3}}.$ Then $$R_1 = \frac{n-k}{k-2}.$$ For $j \geq 2,$ we have
    \begin{align*}
        R_j &= \frac{\binom{n-2-j}{k-2}}{\binom{n-3}{k-3}} 
        = \left( \frac{1}{k-2} \right) \left( \frac{(n-k)!}{(n-k-j)!}\right) \left( \frac{(n-2-j)!}{(n-3)!}\right) \\
        &=  \frac{1}{k-2} \cdot (n-k)(n-k-1) \cdots (n-k-j+1) \cdot \frac{1}{(n-3)(n-4)\cdots(n-j-1)} \\
        &=  \frac{n-k}{k-2} \cdot \prod_{i = 1}^{j-1} \frac{n-k-i}{n-2-i} =  \frac{n-k}{k-2} \cdot \prod_{i = 1}^{j-1} \left( 1 - \frac{k-2}{n-2-i} \right).
    \end{align*}

    By applying the Weierstrass product inequality, and since $i+2\le j+1\le k$, we have
    \begin{align*}
       \frac{n-k}{k-2}  \cdot \prod_{i = 1}^{j-1}& \left( 1 - \frac{k-2}{n-2-i} \right)
        \geq\frac{n-k}{k-2}  \left(1 - \sum_{i = 1}^{j-1} \frac{k-2}{n-2-i} \right) \\
        &\geq\frac{n-k}{k-2}  \left(1 - \sum_{i = 1}^{j-1} \frac{k-2}{n-k} \right) 
        =\frac{n-k}{k-2}  \left(1 - \frac{(j-1)(k-2)}{n-k} \right) =\frac{n-k}{k-2}  - (j-1).
    \end{align*}
  Thus we obtain
    $$\sum_{j = 1}^{k-1} R_j \geq  \frac{(k-1)(n-k)}{k-2}  - \sum_{j = 2}^{k-1} (j - 1) = \frac{(k-1)(n-k)}{k-2} - \frac{(k-2)(k-1)}{2}.$$

To prove the desired inequality, we need to show that $\sum_{j = 1}^{k-1} R_j \geq k^3$.
Set $T(n,k) := \frac{(k-1)(n-k)}{k-2} - \frac{(k-2)(k-1)}{2}$.  It suffices to show $T(n,k) \ge k^3$. Since $T(n,k)$ is increasing in $n$, it is enough to   show that $T(k^3,k) \geq k^3$. Indeed, by applying long division we get
    \begin{align*}
        T(k^3,k) &= \frac{(k-1)(k^3-k)}{k-2} - \frac{(k-2)(k-1)}{2} \\
        &= \frac{k(k-1)^2(k+1)}{k-2} - \frac{(k-2)(k-1)}{2} \\
        &= k^3 + k^2 + k + 3+\frac{6}{k-2}  - 0.5k^2 + 1.5k - 1 \geq k^3.
    \end{align*} 
\end{proof}

\begin{proof}[Proof of Lemma \ref{lem:three>one}]
We begin by showing that
$$\frac{n}{k(n-3k+2)}\binom{n-2k-1}{k-1} > \frac{n^{k-1}}{k!}\left(1- \frac{3}{k+3} \right).$$
First, we obtain a sequence of lower bounds on $\frac{n}{k(n-3k+2)}\binom{n-2k-1}{k-1}$. We have
\begin{align*}
    \frac{n}{k(n-3k+2)}\binom{n-2k-1}{k-1} &= \frac{n}{(n-2k)(n-3k+2)}\binom{n-2k}{k} \\
    & \geq \frac{n}{(n-2k)(n-3k+2)}\frac{(n-3k+2)^{k-1}(n-3k+1)}{k!} \\
    &= \frac{n}{n-2k}\frac{(n-3k+2)^{k-2}(n-3k+1)}{k!} \\
    &> \frac{(n-3k+2)^{k-2}(n-3k+1)}{k!} \\
    &> \frac{(n-3k+1)^{k-1}}{k!} = \frac{n^{k-1}}{k!}\left(1-\frac{3k-1}{n}\right)^{k-1}.
\end{align*}
Note that $(1+x)^r \geq 1 + rx$ whenever $x \geq -1$ and $r \geq 0$. Therefore,
\begin{align*}
    \frac{n^{k-1}}{k!}(1-\frac{3k-1}{n})^{k-1} &\geq \frac{n^{k-1}}{k!}\left(1-\frac{(3k-1)(k-1)}{n} \right) \\
    & > \frac{n^{k-1}}{k!}\left(1-\frac{3k^2}{n} \right) \\
    &\geq \frac{n^{k-1}}{k!}\left(1-\frac{3k^2}{k^3+3k^2} \right) \\
    &= \frac{n^{k-1}}{k!}\left(1-\frac{3}{k+3} \right).
\end{align*}

In the last inequality, we use the fact that $n \geq k^3 + 3k^2$. 

Next, we show that $\frac{n^{k-1}}{k!}\left(1-\frac{3}{k+3} \right) \geq \binom{n-2k-2}{k-2}+ \binom{n-2}{k-1} - \binom{n-k-1}{k-1}+1$. In doing so, we will obtain the desired inequality.

First, observe that by iteratively applying Pascal's identity, we get:
$$\binom{n-2}{k-1}-\binom{n-k-1}{k-1} =\sum_{l=1}^{k-1}\binom{n-2-l}{k-2} \leq (k-1)\frac{n^{k-2}}{(k-2)!}.$$ 

Therefore, we have:
\begin{align*}
    \binom{n-2k-2}{k-2} &+ \binom{n-2}{k-1} - \binom{n-k-1}{k-1}+1 \leq \frac{n^{k-2}}{(k-2)!} + (k-1)\frac{n^{k-2}}{(k-2)!}+1 \\
    &\leq (k+1)\frac{n^{k-2}}{(k-2)!} = \frac{n^{k-1}}{k!}\frac{(k+1)k(k-1)}{n} \leq \frac{n^{k-1}}{k!}\frac{k^3}{n} \\
    &\leq \frac{n^{k-1}}{k!}\frac{k^3}{k^3+3k^2} = \frac{n^{k-1}}{k!}\left(1- \frac{3}{k+3} \right).
\end{align*}

Again, in the last inequality, we used the assumption that $n \geq k^3 + 3k^2$. Putting everything together,  the proof is complete.
\end{proof}

\section*{Appendix 2: Code }
For the remaining values $9\le n \le 14$, our extremal argument does not provide sufficiently strong bounds. We therefore verified these finite cases by an exhaustive satisfiability computation. We formulate the existence of a proper coloring with $n-7$ colors as a SAT instance and solve it using the CaDiCaL solver. In each case the instance is unsatisfiable, establishing that $\chi(\kgstable{n}{3}{3}) \ge n-6$. For completeness and reproducibility, the implementation used for these computations is included here.\\

\begin{lstlisting}
import itertools as it
import math
from pysat.solvers import Cadical195

#generate vertex set and edge set of KG(n,k)_{s-stable}
def gen_stable_sets(n, k, s):
    G = list()
    vtx_set = []
    for x in it.combinations(range(1,n+1),k):
        y = sorted(list(x))
        isSparse = True
        for i in range(1, k):
            if min(y[i] - y[i - 1], n + y[i - 1] - y[i]) < s:
                isSparse = False
                break
                
        if isSparse and min(y[k - 1] - y[0], n + y[0] - y[k - 1]) >= s:
                vtx_set.append(x)

    edge_set = []
    for elt in it.combinations(vtx_set,2):
        if set(elt[0]).isdisjoint(set(elt[1])):
            edge_set.append(elt)

    return vtx_set, edge_set
    
#build and solve satisfiability program, checking if there is a proper 
#of KG(n,r)_{s-stable} coloring using c colors
def build_sat_instances(n, k, s, c):
    
    g = Cadical195()
    (V, E) = gen_stable_sets(n, k, s)
    c_wise_vertices = [[c*i+j for j in range(1,c+1)] for i in range(len(V))]
                
    #Every vertex must be assigned some color
    for i in range(len(V)):
        g.add_clause(c_wise_vertices[i])

    #No vertex has two colors
    for i in range(len(V)):
        for (a,b) in it.combinations(c_wise_vertices[i], 2):
            g.add_clause([-a, -b])
        
    #No adjacent vertices have the same color
    for (a,b) in E:
        a_idx = V.index(a)
        b_idx = V.index(b)

        for j in range(c):
            g.add_clause([-1*c_wise_vertices[a_idx][j], -1*c_wise_vertices[b_idx][j]])

    
    #solve SAT problem, assuming WLOG the first edge is colored with the first two colors
    x = g.solve(assumptions = [c_wise_vertices[V.index(E[0][0])][0], c_wise_vertices[V.index(E[0][1])][1]])
    print(x)


# Uncomment to run the code

# build_sat_instances(9, 3, 3, 9 - 3*(3-1)-1)
#  output: False
# build_sat_instances(10, 3, 3, 10 - 3*(3-1)-1)
#  output: False
# build_sat_instances(11, 3, 3, 11 - 3*(3-1)-1)
#  output: False
# build_sat_instances(12, 3, 3, 12 - 3*(3-1)-1)
#  output: False
# build_sat_instances(13, 3, 3, 13 - 3*(3-1)-1)
#  output: False
# build_sat_instances(14, 3, 3, 14 - 3*(3-1)-1)
#  output: False
\end{lstlisting}

\end{document}